\documentclass[12pt]{amsart}
\usepackage{graphicx} 

\title{Stratification of Derived Categories of Tate Motives}
\author{David Rubinstein}
\date{}
\usepackage{amsmath, amssymb, amsthm, latexsym}
\usepackage{ marvosym }
\usepackage{quiver, tikz-cd}
\usepackage{diagrams}
\usepackage{euler}
\usepackage[mathscr]{eucal}
\usepackage{wasysym}
\usepackage{todonotes}
\usepackage{bbm}
\usepackage{bbold}
\usepackage{lmodern}
\usepackage{hyperref}
\DeclareMathAlphabet{\mathbx}{U}{BOONDOX-ds}{m}{n}
\SetMathAlphabet{\mathbx}{bold}{U}{BOONDOX-ds}{b}{n}
\DeclareMathAlphabet{\mathbbx} {U}{BOONDOX-ds}{b}{n}
\usepackage[all]{xy}
\CompileMatrices 
\usetikzlibrary{arrows}
\newcommand{\nc}{\newcommand}
\nc{\dmo}{\DeclareMathOperator}
\dmo{\SH}{SH}
\nc{\SHc}{{\SH^c}}
\dmo{\Spec}{Spec}

\newcommand{\ZZ}{\mathbx Z}
\newcommand{\QQ}{\mathbx Q}

\newcommand{\MM}{\mathscr M}

\newcommand{\ol}{\overline}
\newcommand{\olQ}{\ol{\QQ}}

\newcommand{\mc}{\mathcal}

\newcommand{\unit}{\mathbb{1}}
\newcommand{\heart}{\mc T^\text{\Heart}}

\newcommand{\lt}{Locid}
\newcommand{\eM}{g({\MM})}

\newcommand{\AT}{\mc A \mc T}

\newcommand{\DMe}{DM^{\text{ét}}}
\newcommand{\DTMe}{DTM^{\text{ét}}}
\newcommand{\dtmp}{DTM(\olQ, \ZZ_{(p)})}
\newcommand{\dtmep}{DTM^{\text{ét}}(\olQ, \ZZ_{(p)})}
\newcommand{\dtmq}{DTM(\olQ, \QQ)}
\newcommand{\dtmz}{DTM(\olQ, \ZZ)}
\newcommand{\dtmmodp}{DTM(\olQ, \ZZ/p)}
\newcommand{\dtmemodp}{DTM^{\text{ét}}(\olQ, \ZZ/p)}

\theoremstyle{plain}
\newtheorem{theorem}{Theorem}[section]
\newtheorem{corollary}[theorem]{Corollary}
\newtheorem{lemma}[theorem]{Lemma}
\newtheorem{proposition}[theorem]{Proposition}

\theoremstyle{definition}

\newtheorem{definition}[theorem]{Definition}
\newtheorem{remark}[theorem]{Remark}

\newtheorem{question}[theorem]{Question}
\newtheorem{hypothesis}[theorem]{Hypothesis}

\begin{document}
\begin{abstract}
   We classify the localizing tensor ideals of the derived categories of mixed Tate motives over certain algebraically closed fields. More precisely, we prove that these categories are stratified in the sense of Barthel, Heard and Sanders. A key ingredient in the proof is the development of a new technique for transporting stratification between categories by means of Brown--Adams representability, which may be of independent interest.  
\end{abstract}
\maketitle

\tableofcontents
\section{Introduction}
The theory of stratification for a rigidly-compactly generated tensor triangulated category \(\mc T\) developed by Barthel, Heard and Sanders \cite{BHSStrat} provides a framework for classifying the localizing ideals of ~\(\mc T\).\hspace{.1cm} At its heart is a support theory \(Supp\) for \(\mc T\) that takes value in the Balmer spectrum of compact objects \(Spc(\mc T^c)\). This support theory is given by tensoring with certain tensor idempotent objects \(g(\mc P) \in \mc T\) associated to each \(\mc P \in Spc(\mc T^c)\). Explicitly, we have \[Supp(t) = \{\mc P \in Spc(\mc T^c): t \otimes g(\mc P) \ne 0\}\]
for each \(t \in \mc T\). The category \(\mc T\) is \textbf{\textit{stratified}} if this support theory provides an order-preserving bijection  between the collection of localizing tensor ideals of \(\mc T\) and the collection of all subsets of \(Spc(\mc T^c)\). 
In this paper, we prove that the derived category of mixed Tate motives over certain algebraically closed fields is stratified. Our starting point is the computation of their Balmer spectra by Gallauer \cite{martinspecpaper}:
\begin{theorem}[Gallauer]
   The Balmer spectrum of \(DTM(\olQ, \ZZ)^c\) is the following topological space: \[\begin{tikzcd}
	& {m_2} & {m_3} & \cdots & {m_p} & \cdots \\
	& {e_2} & {e_3} & \cdots & {e_p} & \cdots \\
	&&& {m_0}
	\arrow[no head, from=3-4, to=2-2]
	\arrow[no head, from=2-3, to=3-4]
	\arrow[no head, from=3-4, to=2-5]
	\arrow[no head, from=2-5, to=1-5]
	\arrow[no head, from=1-3, to=2-3]
	\arrow[no head, from=1-2, to=2-2]
\end{tikzcd}\]
\end{theorem}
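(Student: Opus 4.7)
The plan is to decompose $Spc(\dtmz^c)$ via the arithmetic fracture square of coefficient rings $\ZZ \to \QQ \times \prod_p \FF_p$, compute the Balmer spectrum of each factor, and then glue using Balmer's functoriality. The proof thus splits into three tasks: a rational computation, a mod-$p$ computation for each prime, and a gluing step assembling them.

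For rational coefficients, Borel's calculation of $K_*(\olQ)$ implies Beilinson--Soul\'e vanishing with $\QQ$-coefficients, so motivic cohomology $H^{p,q}(\olQ, \QQ)$ is concentrated in bidegree $(0,0)$. This identifies $\dtmq$ with the derived category of graded $\QQ$-vector spaces, whose Balmer spectrum is a single point $m_0$. For mod-$p$ coefficients, the \'etale realization $\dtmmodp \to \dtmemodp$ lands in $D(\FF_p)$ by Suslin--Voevodsky rigidity, and the Balmer spectrum of $D(\FF_p)$ is a point which I label $e_p$. The Beilinson--Lichtenbaum theorem (Rost--Voevodsky) together with the structure of mod-$p$ Milnor $K$-theory of $\olQ$ controls the kernel of this realization functor, producing a single additional closed prime $m_p$ above $e_p$ and no others.

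To assemble these, I would apply Balmer's functoriality to the comparison maps induced by $\dtmz \to \dtmq$ and $\dtmz \to \dtmmodp$. Rationalization annihilates all $p$-torsion, so the preimage of each of $e_p$ and $m_p$ contains $m_0$ as a generization. Completion at $p$ separates the different primes, so no specialization arises between $e_p$ and $e_{p'}$ or between $m_p$ and $m_{p'}$ for $p \ne p'$. A formal check that the comparison maps are jointly surjective on $Spc$ then yields exactly the spectral diagram in the statement.

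The step I expect to be the main obstacle is the mod-$p$ computation: while the existence of $e_p$ and $m_p$ as primes is immediate from the realization functor and its kernel, ruling out further primes requires showing that every proper tt-ideal of $\dtmmodp^c$ is contained in one of these two. This means producing enough tt-idempotents -- built from Bott inversion, the Bockstein, and the ring structure on $\End_{\dtmmodp}(\unit/p)$ supplied by Rost--Voevodsky -- to exhaust all possible behaviour. The rational computation and the gluing are comparatively formal once these local building blocks are in hand.
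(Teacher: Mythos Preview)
This theorem is not proved in the present paper; it is quoted from Gallauer's work \cite{martinspecpaper} as the starting point for the stratification results. The paper only summarizes the ingredients of Gallauer's computation (Theorem~\ref{maintheoremmartin}, Proposition~\ref{specdtmfinite}, Theorem~\ref{locgivesetal}, and Peter's rational result), so there is no in-paper proof to compare against line by line. That said, your outline---compute at each residue characteristic and assemble---is the same skeleton Gallauer uses.

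There is, however, a genuine error in your rational step. Beilinson--Soul\'e vanishing for $\olQ$ does \emph{not} say that $H^{p,q}(\olQ,\QQ)$ is concentrated in bidegree $(0,0)$; for instance $H^{1,1}(\olQ,\QQ)\cong K_1^M(\olQ)\otimes\QQ\cong\olQ^{\times}\otimes\QQ$ is nonzero (indeed huge). Consequently $\dtmq$ is \emph{not} equivalent to the derived category of graded $\QQ$-vector spaces, and Borel's theorem---which computes $K$-theory of number fields, not of $\olQ$---does not deliver what you claim. The correct argument that $Spc(\dtmq^c)$ is a point, due to Peter and recalled in \S\ref{m_0section}, runs through Levine's t-structure: the heart $\AT$ admits a functorial weight filtration whose graded pieces land in graded $\QQ$-vector spaces, and this is enough to classify coherent tensor ideals of $\AT$. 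It is a reduction via a conservative tensor functor, not an equivalence of tt-categories.

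A smaller issue concerns your gluing. You invoke an arithmetic fracture square along $\ZZ\to\QQ\times\prod_p\FF_p$, but this is not a tt-descent diagram in any evident sense, and completion plays no role. Gallauer instead assembles the picture from the finite algebraic localizations $\dtmz\to\dtmp$ (see Remark~\ref{3difftypesofprimes}), each of which contributes one vertical slice $\{m_p,e_p,m_0\}$; surjectivity of the induced maps on spectra and the identification of the fibers come from the conservativity of the family $\{\gamma_p^*\}_p\cup\{\gamma_\QQ^*\}$ on compacts, not from a fracture argument.
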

We will explain what these primes are in Theorem~\ref{maintheoremmartin}. Our main theorem is the following (Theorem~\ref{maintheoremdtmstrat}):
\begin{theorem}
    The category \(DTM(\olQ, \ZZ)\) is stratified.
\end{theorem}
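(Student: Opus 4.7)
My plan is to apply the minimality criterion of Barthel--Heard--Sanders, under which $DTM(\olQ, \ZZ)$ is stratified if and only if, for each prime $\mc P$ in Gallauer's spectrum, the local stratum $\Gamma_\mc P DTM(\olQ, \ZZ)$ carries no non-trivial proper localizing tensor ideal. To make this prime-by-prime analysis tractable, I would first decompose the category via an arithmetic fracture square into its rational part $\dtmq$ and its $p$-primary parts, reducing the problem to stratifying $\dtmq$ and each $\dtmmodp$ separately and then gluing the conclusions along the spectrum displayed in the theorem.

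For the rational piece, Beilinson--Soulé vanishing over $\olQ$ should identify $\dtmq$ with the derived category of $\ZZ$-graded $\QQ$-vector spaces, which is trivially stratified; this disposes of the prime $m_0$. For each étale prime $e_p$, I would invoke Suslin--Voevodsky rigidity to identify $\dtmemodp$ with a graded analogue of $D(\FF_p)$, which is again stratified for formal reasons. These two steps handle all primes of the spectrum except the closed points $m_p$.

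The central difficulty is the motivic prime $m_p$, at which rigidity fails: the integral mod-$p$ category $\dtmmodp$ does not directly compare to a derived category of modules. Here I would deploy the Brown--Adams transport principle announced in the abstract. The étale realization $\dtmmodp \to \dtmemodp$ is a Bousfield localization whose target is already stratified by the previous paragraph, and the transport result should lift stratification across this functor once one verifies the appropriate compact generation and tensor-idempotent hypotheses. Assembling the three cases through the fracture square then yields stratification of $DTM(\olQ, \ZZ)$. I expect the hardest step to be precisely this transport: one must rule out localizing ideals of $\dtmmodp$ that become invisible after étale realization, equivalently, control the kernel of the étale localization and exhibit enough tensor-idempotent objects at $m_p$. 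This is exactly the content that the new Brown--Adams technique is designed to supply, and any obstruction to the argument will live entirely at these closed points.
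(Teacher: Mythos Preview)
Your prime-by-prime outline and your handling of $m_0$ and $e_p$ are broadly in line with the paper, though the paper does not use a fracture square: it simply invokes noetherianity of $Spc(DTM(\olQ,\ZZ)^c)$ to get the local-to-global principle and then checks minimality at each prime by passing to the local category $\dtmp$ (Remark~\ref{3difftypesofprimes}, Theorem~\ref{localresult}). For $m_0$ the paper does not quite identify $\dtmq$ with $D(Gr_{\ZZ}Vect_{\QQ})$; rather it realizes $\dtmq$ as $D(\AT)$ for the Levine--Wildeshaus heart $\AT$ and then uses Biglari's embedding $gr:\AT\hookrightarrow Gr_{\ZZ}Vect_{\QQ}$ to show $\AT$ has no nontrivial coherent $\otimes$-ideals. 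This is close to what you sketch.

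The genuine gap is at $m_p$. You propose to apply the Brown--Adams transport principle along the \'etale realization $\dtmmodp \to \dtmemodp$. This cannot work: that functor is the finite localization killing $\mathrm{cone}(\beta_p)$ (Theorem~\ref{locgivesetal}), so it is not an equivalence on compact objects, and the transport theorem (Theorem~\ref{stratifiedusingbrownadamstheorem}) demands precisely a tt-equivalence $\mc T_1^c \simeq \mc T_2^c$. Lifting stratification back along a localization would at best recover information about the prime $e_p$ in its image, never about the closed point $m_p$ itself, which is exactly the prime you need.

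What the paper actually does is use Gallauer's equivalence of \emph{compact parts}
\[
pos:\; D_{fil}(\ZZ/p)^c \;\xrightarrow{\ \sim\ }\; DTM(\olQ,\ZZ/p)^c,
\]
together with the identification $D_{fil}(\ZZ/p)\simeq D(Mod_{gr}(\ZZ/p[\beta]))$. The latter big category is stratified by Dell'Ambrogio--Stevenson, has countable compacts, and its $E_\beta$ objects are the graded residue fields, which are field objects admitting nonzero maps into every nonzero object. These are exactly the hypotheses of the transport theorem, and this is where the Brown--Adams machinery enters. Your proposal never mentions filtered modules or $\ZZ/p[\beta]$, and without that comparison there is no category on the other side of the transport to which the theorem applies.
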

There are many consequences of a category \(\mc T\) being stratified (see, e.g. \cite[Part~III]{BHSStrat}). For example, if \(\mc T\) is stratified and \(Spc(\mc T^c)\) satisfies a mild topological assumption, then the abstract telescope conjecture holds (see \cite[Theorem~9.11]{BHSStrat}). This topological condition holds for \(Spc(DTM(\olQ, \ZZ)^c)\) for example, so as a corollary to the above theorem, we get (Theorem~\ref{telescopeconjecture}):
\begin{theorem}
    Every smashing tensor ideal of \(DTM(\olQ, \ZZ)\) is compactly generated.
\end{theorem}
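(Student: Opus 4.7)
The plan is to deduce this as an immediate corollary of the main stratification theorem combined with \cite[Theorem~9.11]{BHSStrat}. That general result asserts that for a stratified rigidly-compactly generated tensor triangulated category $\mc T$ whose Balmer spectrum $Spc(\mc T^c)$ satisfies a mild topological hypothesis (\emph{weak Noetherianity}: every point is the set-theoretic difference of two Thomason subsets), the abstract telescope conjecture holds, i.e.\ every smashing tensor ideal of $\mc T$ is compactly generated.

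The stratification hypothesis is supplied by Theorem~\ref{maintheoremdtmstrat}, so the only work left is to verify the weak Noetherianity of $Spc(DTM(\olQ, \ZZ)^c)$. Using Gallauer's explicit description of this spectrum (Theorem~\ref{maintheoremmartin}), I would directly check that each of the countably many points is weakly visible: the closed points $m_p$ are trivially so; each intermediate point $e_p$ is locally closed within its length-two specialization chain $m_0 \leadsto e_p \leadsto m_p$; and the generic point $m_0$ can be realised as a difference of two Thomason subsets of the whole spectrum. Because Gallauer's diagram has Krull dimension two and a simple ``tree'' shape, these checks are essentially combinatorial.

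With both hypotheses in hand, \cite[Theorem~9.11]{BHSStrat} applies directly to yield the conclusion. There is no real obstacle in the present statement: all of the substantive content is carried by the main stratification theorem, from which the telescope conjecture follows formally once the elementary topological side-condition has been verified against Gallauer's diagram.
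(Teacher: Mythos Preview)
Your approach matches the paper's exactly: the paper likewise deduces the telescope conjecture as an immediate consequence of Theorem~\ref{maintheoremdtmstrat} via \cite[Theorem~9.11]{BHSStrat}, with no further argument. One small inaccuracy worth flagging: the topological side-condition in \cite[Theorem~9.11]{BHSStrat} is not weak noetherianity---that hypothesis is already baked into the definition of stratification---but rather that the spectrum be \emph{generically noetherian}; in any case there is nothing to verify here, since Theorem~\ref{maintheoremmartin} records that $Spc(DTM(\olQ,\ZZ)^c)$ is noetherian outright, making your point-by-point check of weak visibility unnecessary.
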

Proving that a category is stratified amounts to establishing two properties: (a) the `local-to-global' principle, which morally says every object can be reconstructed from its local pieces; and (b), the `minimality' property, which says the localizing ideals generated by the tensor idempotents \(g(\mc P)\) are minimal. Importantly, Barthel, Heard and Sanders, building off the earlier work of Stevenson, prove that if the spectrum is noetherian, as the above spectrum of Tate motives is for example, the local-to-global principle always holds, and so the problem reduces to proving the minimality property. Moreover, minimality is a local condition, so the bulk of this paper is devoted to proving that minimality holds at the unique closed points in the corresponding local categories for each prime in \( Spc(DTM(\olQ, \ZZ)^c)\). While there are infinitely many primes in \( Spc(DTM(\olQ, \ZZ)^c)\), there are qualitatively only 3 distinct primes to consider: \(m_0, e_p, \text{ and } m_p\).

The local category associated to the generic point \(m_0\) is equivalent to the category of rational motives \(\dtmq\). We prove this category is stratified in \S\ref{m_0section} using an extension of work on t-structures for rational motives by Peter and Levine \cite{peterspecQ, tatemotiveststructure}. 

For the heights one and two primes, \(e_p \text{ and }m_p\), we use a result similar in spirit to the quasi-finite descent of \cite{descent} to reduce to the following two local categories: \begin{enumerate}
    \item \(\dtmemodp\): which corresponds to \(e_p\) (see \S\ref{e_psection}), and
    \item \(\dtmmodp\): which corresponds to \(m_p\) (see \S\ref{m_psection}).
\end{enumerate} Stratification of \(\dtmemodp\) is an immediate consequence of the so-called `Rigidity Theorem' for étale motives with finite coefficients (see Theorem~\ref{rigiditytheorem}). On the other hand, in proving \(\dtmmodp\) is stratified in \S\ref{m_psection}, we are led naturally to the following question, which may be of independent interest. 
\begin{question}\label{questioninintro}
   Suppose \(\mc T_1\) and \(\mc T_2\) are two rigidly-compactly generated tensor-triangulated categories with a tt-equivalence between their compact parts~\(\mc T_1^c \simeq \mc T_2^c\). If \(\mc T_1\) is stratified, does it follow that \(\mc T_2\) is stratified as well?
\end{question} 
In \S\ref{brownadamssection}  we investigate this question through the lens of Brown--Adams representability. More precisely, given a rigidly-compactly generated tt-category \(\mc T\), we study the module category \(\mc A := Add((\mc T^c)^{op}, \mc Ab)\) along with the restricted Yoneda functor \begin{align*}
    h: \hspace{.1cm}&\mc T \xrightarrow{}  \mc A \\
    & t \xrightarrow{} \hat{t}: = Hom(-, t)|_{\mc T^c}.
\end{align*}
The essential image of restricted Yoneda lies in the subcategory of homological functors, and in certain examples a much stronger relationship holds. For example, Adams showed in \cite{OGbrownadamrepinSH} that for the stable homotopy category \( \mc T = \text{SH}\), every homological functor \(\mc H: (\mc T^c)^{op} \to \mc{A}b\) is the restriction to \(\mc T^c\) of a representable functor on \(\mc T\), and every map between homological functors is induced by a (non-unique) map between the representing spectra. Due to this historical example, a tt-category is said to satisfy Brown--Adams representability if it satisfies those above two properties. Determining whether or not a category satisfies Brown--Adams representability is a challenging problem in general. Importantly for us however, Neeman, generalizing a theorem of Brown, showed in \cite{onatheoremofbrownandadams} that a sufficient condition is that the subcategory of compact objects \(\mc T^c\) is equivalent to a countable category. 

The category of modules \(\mc A\), and its relation to \(\mc T\), has also been studied by Balmer, Krause and Stevenson in a series of recent papers \cite{ttfieldsrumination, ttfieldframesmashing, homsupport, ttfieldnilpotence}. In particular, they use \(\mc A\) to define a new spectrum, the so-called homological spectrum \(Spc^h(\mc T^c)\). Associated to each `homological prime' \(\beta \in Spc^h(\mc T^c)\) is a certain pure-injective object \(E_\beta \in \mc T\), and these can be used to define a new support theory for \(\mc T\). These \(E_\beta\) objects are often much better behaved than the analogous \(g(\mc P)\) objects are. For example, they are often field objects (see Remark~\ref{fieldEbobjects}). Our main theorem for this section is the following (see Theorem~\ref{stratifiedusingbrownadamstheorem}):
\begin{theorem}\label{intro brown adams theorem}
    Suppose \(\mc T_1\) and \(\mc T_2\) are two rigidly-compactly generated tensor-triangulated categories with a tt-equivalence between their compact parts \(\mc T_1^c \simeq \mc T_2^c\). Suppose further that: \begin{enumerate}
        \item [(1)] \(\mc T_1\) is stratified;
        \item [(2)] \(\mc T_1^c\) is equivalent to a countable category;
        \item [(3)] The \(E_\beta\) objects in \(\mc T_1\) are field objects; and
        \item [(4)] For every nonzero homological functor \(\hat{t} \in \mc A_1\) there exists a nonzero map \(\hat{E}_\beta \otimes \hat{x} \to \hat{t}\) for some \(\beta \in Spc^h(\mc T_1^c)\) and compact \(x \in \mc T_1^c\).
    \end{enumerate}  Then \(\mc T_2\) is also stratified.
\end{theorem}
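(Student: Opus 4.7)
\emph{Plan.} The strategy is to transport stratification from $\mc T_1$ to $\mc T_2$ by exploiting that all of the invariants at play---$Spc(\mc T^c)$, $Spc^h(\mc T^c)$, the module category $\A$, the comparison map $\phi : Spc^h(\mc T^c) \to Spc(\mc T^c)$, and the $\hat E_\beta \in \A$---depend only on $\mc T^c$, and are therefore canonically identified across the tt-equivalence $\mc T_1^c \simeq \mc T_2^c$. Writing $\A_1 \simeq \A_2$ for the resulting tensor-abelian equivalence, hypothesis~(2) together with Neeman's theorem ensures that both $\mc T_1$ and $\mc T_2$ satisfy Brown--Adams representability, so the restricted Yoneda functors $h_i : \mc T_i \to \A_i$ are essentially surjective onto homological functors and lift each $\hat E_\beta$ to a pure-injective $E_\beta \in \mc T_i$.

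The first task is to transfer hypotheses~(3) and~(4) to $\mc T_2$. Condition~(4), being a statement purely inside $\A$, transfers verbatim via $\A_1 \simeq \A_2$. For~(3), the point is that the field property of $E_\beta$ can be recast as a statement about tensor products and morphisms in $\A$, both of which lift to $\mc T_2$ via Brown--Adams; hence each $E_\beta \in \mc T_2$ is again a field object. I would then verify the two conditions characterizing stratification in the sense of Barthel--Heard--Sanders. For the local-to-global/detection principle: given a nonzero $t \in \mc T_2$, the image $\hat t \in \A_2$ is nonzero, so the transferred~(4) produces a nonzero morphism $\hat E_\beta \otimes \hat x \to \hat t$ which Brown--Adams lifts to a nonzero map $E_\beta \otimes x \to t$ in $\mc T_2$, forcing $E_\beta \otimes t \ne 0$ and hence---via the comparison map $\phi$---$Supp(t) \ne \emptyset$. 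For minimality, the field property gives $Loc(E_\beta) \subseteq \mc T_2$ minimal, and $\phi(\beta) = \mc P$ yields the automatic inclusion $Loc(E_\beta) \subseteq Loc(g_{\mc T_2}(\mc P))$; the reverse inclusion follows because the same analysis inside the stratified category $\mc T_1$ forces $\phi$ to be a bijection with $Loc(E_{\phi^{-1}(\mc P)}) = Loc(g_{\mc T_1}(\mc P))$, and $\phi$ is invariant across the equivalence.

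The main obstacle, and the place where the most care will be required, is rigorously justifying that the field property and the equality $Loc(g(\mc P)) = Loc(E_\beta)$ really do descend to---and then lift from---$\A$ via Brown--Adams representability. Since Brown--Adams lifts morphisms only up to non-unique choices, any tensor identity or commutative diagram that one wants to read off $\A_i$ and transport to $\mc T_i$ must be set up so that it does not depend on these choices. I expect the technical core of the argument to be a lemma asserting that, under Brown--Adams representability, a pure-injective $E \in \mc T$ is a field object if and only if the corresponding $\hat E \in \A$ has an analogous minimality property; once this is in place, the passage between $\mc T_1$ and $\mc T_2$ becomes formal, and Theorem~\ref{intro brown adams theorem} reduces to the verifications above.
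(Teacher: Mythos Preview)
Your overall strategy---transport everything that depends only on $\mc T^c$ across the equivalence, use Neeman's countability criterion for Brown--Adams representability, and then verify local-to-global and minimality in $\mc T_2$---is exactly the paper's. The transfer of hypothesis~(4) and of the field property (your paragraph on lifting tensor identities via Brown--Adams, which is the paper's Proposition~\ref{fieldobjects}) is handled correctly.

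There is, however, a genuine gap in your minimality argument, precisely at the inclusion $\lt(g_{\mc T_2}(\mc P)) \subseteq \lt(E_{\beta_{\mc P}})$. You write that this ``follows because the same analysis inside the stratified category $\mc T_1$ forces $\phi$ to be a bijection \dots\ and $\phi$ is invariant across the equivalence.'' But bijectivity of $\phi$ is not enough: the idempotent $g(\mc P)$ is \emph{not} an invariant of $\mc T^c$---it lives in the big category---so the equality $\lt(g_{\mc T_1}(\mc P)) = \lt(E_{\beta_1})$ in $\mc T_1$ does not transport to $\mc T_2$ through $\mc A$. Your final paragraph compounds this by suggesting that the equality $\lt(g(\mc P)) = \lt(E_\beta)$ can be ``descended to and lifted from $\mc A$''; it cannot, for the same reason.

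The paper closes this gap by a different route. From the transferred hypothesis~(4) one obtains, for every nonzero $t \in \mc T_2$, a nonzero map $E_\beta \otimes x \to t$ (not $E_\beta \otimes t \ne 0$, as you write---the correct consequence is $[E_\beta, t] \ne 0$). This shows $\big(\lt(E_\beta : \beta)\big)^\perp = 0$, and since the ideal is set-generated one concludes $\mc T_2 = \lt(E_\beta : \beta)$ (Proposition~\ref{E_b generate cat}). \emph{This} generation statement is what drives the reverse inclusion:
\[
\lt(g(\mc P)) = \lt(g(\mc P)) \otimes \mc T_2 = \lt\big(g(\mc P) \otimes E_\beta : \beta\big) = \lt\big(g(\mc P) \otimes E_{\beta_{\mc P}}\big) \subseteq \lt(E_{\beta_{\mc P}}),
\]
using that $g(\mc P) \otimes E_\beta = 0$ for $\beta \ne \beta_{\mc P}$. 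The same generation statement also yields $\unit \in \lt(g(\mc P) : \mc P)$, giving the local-to-global principle directly rather than merely detection. Once you replace your invariance-of-$\phi$ appeal with this computation, the argument is complete and coincides with the paper's.
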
     
As mentioned above, we use this theorem to prove that \(\dtmmodp\) is stratified. The compact part of this category is known to be equivalent to the (bounded) derived category of filtered \(\ZZ/p\) vector spaces (see Chapter \ref{m_psection}), and this category is well understood and amenable to the techniques of stratification. In particular, it satisfies the hypothesis of the above theorem. This allows us to conclude that \(\dtmmodp\) is stratified (see Theorem~\ref{dtmmodpstrattheorem}), and hence deduce that minimality holds at the height two prime \(m_p\).

A final comment to make is that, while we are only considering motives over \(\olQ\) in this paper, our results hold for other algebraically closed fields satisfying a certain vanishing condition, as in \cite[Hypothesis~6.6]{martinspecpaper}. In particular, this vanishing condition holds for \(\overline{\mathbx F}_p\), see \cite[Remark~6.8]{martinspecpaper}. Moreover, our results are conjectured to hold for \(\mathbx C\). More specifically, the conjecture is as follows: for any real closed field \(\mathbx F\), there is a canonical surjective comparison map (see \cite[\S5]{SpectraSpectraSpectra}) \[\rho: Spc(DTM(\mathbx F, \ZZ)^c) \to Spec(\ZZ)\]
and so the Balmer spectrum can be found by identifying the fibers of this map. For fields satisfying the vanishing hypothesis, the fiber of the generic point \((0)\) is known to be a singleton (for example, the fiber of \((0)\) is \(m_0\) for the case \(DTM(\olQ,\ZZ)^c\)), whereas the fiber is only conjecturally a singleton for \(\mathbx F = \mathbx C\) (see \cite[Remark~11.8]{threerealartintatemotives}). If the fiber is a singleton as conjectured, then our proofs hold in their entirely for \(\mathbx C\) as well.

\section{Balmer--Favi Support and Stratification}
Throughout the paper \(\mc T\) will denote a rigidly-compactly generated tensor-triangulated category and we will write \(\mc T^c\) for the subcategory of compact-rigid objects. We assume that the reader has a general understanding of tt-geometry as in \cite{balmer2005} and the theory of stratification as in \cite{BHSStrat}. Let us briefly remind the reader of the relevant definitions and results we shall use for this paper. 
\begin{definition}\label{weaklynoeth}
Let \(\mc P \in Spc(\mc T^c)\) be a prime. We say \(\mc P\) is \textbf{weakly visible} if \(\{\mc P\}\) is the intersection of a Thomason subset and the complement of a Thomason subset. We say \(\mc P\) is \textbf{visible} if its closure is Thomason. Note that a visible prime is weakly visible. If every prime \(\mc P \in Spc(\mc T^c)\) is weakly visible (respectively visible) we say \(Spc(\mc T^c)\) is weakly noetherian (respectively noetherian). 
\end{definition}
Associated to each weakly visible prime \(\mc P \in Spc(\mc T^c)\) is a nonzero tensor idempotent object \(g(\mc P)\) in \(\mc T\) \cite{balmerfavisupport, BHSStrat}. These objects are used to define a support theory on \(\mc T\):
\begin{definition}[Balmer--Favi]
Suppose that \(Spc(\mc T^c)\) is weakly noetherian. The Balmer--Favi support of an object \(t \in \mc T\) is \[Supp(t): = \{\mc P \in Spc(\mc T^c) : t \otimes g(\mc P) \ne 0\}.\]
\end{definition}
\begin{remark} \label{ g(P) objects in two-point case}
    Let us make a quick remark about these \(g(\mc P)\) objects in a special case. Let \(\mc T\) be a rigidly-compactly generated tt-category, and suppose \(Spc(\mc T^c)\) consists of two, connected points \[\begin{tikzcd}
	{Spc(\mc T^c)} \\
	\MM \\
	\mc P.
	\arrow[no head, from=2-1, to=3-1]
\end{tikzcd}\]
That is, a closed point \(\mc M\) and a generic point \(\mc P\). Consider the idempotent triangle \[e_{\MM} \to \unit \to f_{\MM} \to \Sigma e_{\MM}\]
associated to the finite localization for \(Y = \{\mc M\}\). We have that \begin{enumerate}
    \item [(1)] \(g(\MM) = e_{\MM}\), and 
    \item [(2)] \(g(\mc P) = f_{\MM}\).
\end{enumerate}
We will use this observation later in \S\ref{m_psection}.
\end{remark}
\begin{remark}
    The Balmer-Favi support extends the universal Balmer support for compact objects. That is, for \(x \in \mc T^c\) we have \(Supp(x) = supp(x)\); see \cite[Lemma~2.18]{BHSStrat}.
\end{remark} 
\begin{definition}[Local-to-Global Principle]
Suppose that \(Spc(\mc T^c)\) is weakly noetherian. We say \(\mc T \) satisfies the \emph{local-to-global principle} if \[Locid(t) = Locid(t \otimes g(\mc P) : \mc P \in Spc(\mc T^c))\]
for any \(t \in \mc T\).
\end{definition}
\begin{remark}\label{detectionprop}
Note that if \(\mc T\) satisfies the local-to-global principle, then it also satisfies the so-called detection property: \[Supp(t) = \varnothing \iff t=0. \]
\end{remark}
\begin{remark}\label{localglobalnoeth}
     While there is no general characterization for when the local-to-global principle holds, a sufficient condition is that \(Spc(\mc T^c)\) is noetherian. This is established in \cite[Theorem~3.22]{BHSStrat}, building off the work of Stevenson \cite{supporttheoryviaactions, localtoglobalstevenson}.
\end{remark}
Now we come to the main characterization of stratification established in \cite[Theorem~4.1]{BHSStrat}:
\begin{theorem}[Stratification]\label{strattheorem}
Suppose \(Spc(\mc T^c)\) is weakly noetherian. Then the following are equivalent: \begin{enumerate}
    \item [(a)] The local-to-global principle holds for \(\mc T\) and for each prime ~\(\mc P \in Spc(\mc T^c)\) we have that \(Locid(g(\mc P))\) is a minimal localizing tensor ideal of \(\mc T\).
    \item [(b)] For all \(t \in \mc T\), \(Locid(t) = Locid ( g(\mc P) : \mc P \in Supp(t)) \).
    \item [(c)] The map \begin{align*}
        \{\text{Localizing ideals of } \mc T\} &\xrightarrow{Supp} \{\text{Subsets of } Spc(\mc T^c)\}\\
        \mc L  \mapsto Su & pp(\mc L)  := \bigcup_{t \in \mc L}Supp(t)
    \end{align*}is a bijection.
\end{enumerate}
\end{theorem}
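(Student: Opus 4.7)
The plan is to prove the cycle (a) $\Rightarrow$ (b) $\Rightarrow$ (c) $\Rightarrow$ (a). The backbone is the behavior of the Balmer--Favi idempotents under tensor product: I would first record two supporting facts, namely that $Supp(g(\mc P)) = \{\mc P\}$ for every weakly visible prime $\mc P$, and that $Supp(t \otimes g(\mc P)) \subseteq \{\mc P\}$ for any $t \in \mc T$. Both follow from the idempotent and pairwise orthogonal nature of the $g(\mc P)$, together with the fact that $-\otimes g(\mc P)$ preserves arbitrary colimits (since $\mc T$ is rigidly-compactly generated). A related consequence I would isolate as a lemma is that $Supp(s) \subseteq Supp(t)$ for every $s \in Locid(t)$, since the condition $s \otimes g(\mc P) = 0$ is preserved under shifts, cones, arbitrary sums, filtered colimits, and tensoring with compact objects.

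For (a) $\Rightarrow$ (b), I would start from the local-to-global decomposition $Locid(t) = Locid(t\otimes g(\mc P): \mc P\in Spc(\mc T^c))$ and restrict the generating set to those $\mc P$ with $t\otimes g(\mc P)\ne 0$, which is by definition $Supp(t)$. For each such $\mc P$, the object $t\otimes g(\mc P)$ is a nonzero element of $Locid(g(\mc P))$, so the minimality hypothesis forces $Locid(t\otimes g(\mc P))=Locid(g(\mc P))$. Substituting back yields (b).

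For (b) $\Rightarrow$ (c), I would construct an explicit inverse sending a subset $S \subseteq Spc(\mc T^c)$ to $\mc L_S := Locid(g(\mc P):\mc P\in S)$. Injectivity of $Supp$ follows from (b) applied to each $t \in \mc L$: the principal ideals $Locid(t) = Locid(g(\mc P): \mc P \in Supp(t))$ together exhaust $\mc L$, showing $\mc L = \mc L_{Supp(\mc L)}$. For the identity $Supp(\mc L_S) = S$, the inclusion $S \subseteq Supp(\mc L_S)$ is trivial from $Supp(g(\mc P))=\{\mc P\}$, while the reverse inclusion reduces to showing $g(\mc Q) \notin \mc L_S$ whenever $\mc Q \notin S$: the kernel of the colimit-preserving functor $-\otimes g(\mc Q)$ is a localizing ideal containing every generator of $\mc L_S$ by orthogonality, and hence contains $\mc L_S$, whereas $g(\mc Q) \otimes g(\mc Q) = g(\mc Q) \ne 0$.

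For (c) $\Rightarrow$ (a), the detection property $Supp(t) = \varnothing \Leftrightarrow t = 0$ is immediate from injectivity, and this in turn yields the local-to-global principle since both $Locid(t)$ and $Locid(t \otimes g(\mc P): \mc P \in Spc(\mc T^c))$ have support equal to $Supp(t)$ (using the lemma above) and therefore coincide. Minimality of $Locid(g(\mc P))$ is similar: any nonzero $s \in Locid(g(\mc P))$ has $Supp(s) \subseteq \{\mc P\}$ and is nonzero, so $Supp(s) = \{\mc P\}$, forcing $Locid(s) = Locid(g(\mc P))$ by injectivity. The main obstacle I anticipate is the containment $Supp(\mc L_S) \subseteq S$ in the (b) $\Rightarrow$ (c) step, since it is the one place where the closure of a localizing ideal under filtered colimits must interact nontrivially with the orthogonality of the idempotents; everything else is essentially diagram chasing with the two basic facts about $g(\mc P)$ recorded at the outset.
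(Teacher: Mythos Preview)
The paper does not actually prove this theorem: it is quoted verbatim as \cite[Theorem~4.1]{BHSStrat} and no argument is given. So there is no ``paper's own proof'' to compare your proposal against; your cycle (a) $\Rightarrow$ (b) $\Rightarrow$ (c) $\Rightarrow$ (a) is the standard argument from the Barthel--Heard--Sanders source and is correct in substance.

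One small slip worth fixing: in the (b) $\Rightarrow$ (c) step you write that the reverse inclusion $Supp(\mc L_S)\subseteq S$ ``reduces to showing $g(\mc Q)\notin \mc L_S$ whenever $\mc Q\notin S$''. That is not the right subgoal; what you need is that $\mc Q\notin Supp(\mc L_S)$, i.e.\ $s\otimes g(\mc Q)=0$ for every $s\in\mc L_S$. Fortunately the argument you then give (the kernel of $-\otimes g(\mc Q)$ is a localizing ideal containing the generators of $\mc L_S$, hence all of $\mc L_S$) establishes exactly this, so the proof goes through; the clause about $g(\mc Q)\otimes g(\mc Q)\ne 0$ is superfluous for this direction.
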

\begin{definition}
We say that \(\mc T \) is \textbf{stratified} if any of the equivalent conditions (a)-(c) from above hold. We will say that \(\mc T \) has ``\textbf{minimality at \(\mc P\)}", or satisfies the minimality property at \(\mc P\), if \(\lt (g(\mc P))\) is a minimal localizing ideal. Importantly, the minimality property is a condition that can be checked locally:
\end{definition}
\begin{theorem}\cite[Proposition~5.2]{BHSStrat} \label{localresult}
Suppose \(\mc T\) satisfies the local-to-global principle and has weakly noetherian spectrum. Then \(\mc T \) has minimality at \(\mc P \in Spc(\mc T^c)\) if and only if the local category \(\mc T / \lt (\mc P)\) has minimality at its unique closed point.
\end{theorem}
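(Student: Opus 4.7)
The plan is to realize $\mc T/\lt(\mc P)$ as a smashing localization sitting inside $\mc T$, and then to transfer the minimality statement directly through this identification, using Balmer--Favi support to handle the cross-terms.

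Since $\mc P$ is a thick tensor-ideal of $\mc T^c$ and hence generated by a set of compact objects, $\lt(\mc P)\subseteq\mc T$ is smashing, with idempotent triangle $e\to\unit\to f\to\Sigma e$. This yields a canonical tt-equivalence $\mc T/\lt(\mc P)\simeq f\mc T$, the subcategory of $f$-local objects of $\mc T$, whose Balmer spectrum is $\{\mc Q\in Spc(\mc T^c): \mc Q\supseteq\mc P\}$ with $\mc P$ as the unique closed point. A key input is that for distinct weakly visible primes $\mc P\ne\mc Q$ one has $g(\mc P)\otimes g(\mc Q)=0$ (both sides are of the form $e_{Y_1\cap Y_3}\otimes f_{Y_2\cup Y_4}$ with $Y_1\cap Y_3\subseteq Y_2\cup Y_4$). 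Consequently $Supp(g(\mc P)\otimes x)\subseteq\{\mc P\}$ for any $x$; for compact $x\in\mc P$ this support is in fact empty (since $\mc P\notin Supp(x)$), so $g(\mc P)\otimes x=0$ by detection, whence $g(\mc P)\otimes e=0$ and $g(\mc P)\in f\mc T$. A comparison of the Thomason witnesses for weak visibility identifies $g(\mc P)$ with the Balmer--Favi idempotent of the closed point of $f\mc T$. Because $g(\mc P)$ is already $\mc P$-local, the ideal $\lt_\mc T(g(\mc P))$ lies in $f\mc T$ and coincides with $\lt_{f\mc T}(g(\mc P))$ as a subcategory of $\mc T$.

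The forward implication is then almost automatic: if $\lt_\mc T(g(\mc P))$ is minimal nonzero in $\mc T$, then for any nonzero $s\in\lt_{f\mc T}(g(\mc P))$ the identification above gives $\lt_{f\mc T}(s)=\lt_\mc T(s)=\lt_\mc T(g(\mc P))=\lt_{f\mc T}(g(\mc P))$. For the converse, assume local minimality at $\mc P$ in $f\mc T$ and pick a nonzero $t\in\lt_\mc T(g(\mc P))$. For any $\mc Q\ne\mc P$ we have $g(\mc P)\otimes g(\mc Q)=0$, so $\lt_\mc T(g(\mc P))\otimes g(\mc Q)=0$ and in particular $t\otimes g(\mc Q)=0$. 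The local-to-global principle for $\mc T$ therefore collapses to $\lt_\mc T(t)=\lt_\mc T(t\otimes g(\mc P))$. Since $t\ne 0$, necessarily $t\otimes g(\mc P)\ne 0$; being in $\lt_{f\mc T}(g(\mc P))$, the local minimality forces $\lt_{f\mc T}(t\otimes g(\mc P))=\lt_{f\mc T}(g(\mc P))$, which in turn gives $\lt_\mc T(t)=\lt_\mc T(g(\mc P))$.

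The main obstacle I anticipate is the identification in the first paragraph: verifying that the ambient $g(\mc P)$ matches, under the equivalence $\mc T/\lt(\mc P)\simeq f\mc T$, the Balmer--Favi idempotent for the closed point of the local category. This requires carefully tracking a Thomason pair witnessing weak visibility of $\mc P$ through the smashing localization and invoking functoriality of the Balmer--Favi construction. Once that compatibility is in place, the remainder of the argument is a clean two-way traffic between $\mc T$ and $f\mc T$, with the local-to-global principle and the detection property doing all the heavy lifting.
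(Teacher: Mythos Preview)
The paper does not give its own proof of this statement; it is recorded as a citation to \cite[Proposition~5.2]{BHSStrat} with no argument supplied, so there is nothing in the paper to compare your proposal against directly.

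That said, your sketch is the standard line of reasoning and is essentially how the cited result is established: realize $\mc T/\lt(\mc P)$ as the smashing localization $f\mc T$, check that the ambient idempotent $g(\mc P)$ agrees with the Balmer--Favi idempotent at the closed point of the local category, and then shuttle minimality back and forth using the orthogonality $g(\mc P)\otimes g(\mc Q)=0$ for $\mc P\ne\mc Q$ together with the local-to-global principle. The point you flag as the main obstacle---compatibility of $g(\mc P)$ under the finite localization---is exactly the functoriality of the Balmer--Favi construction with respect to smashing localizations, and once that is in hand your two implications go through as written.
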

\begin{remark}
Thus, assuming the local-to-global principle holds, to establish \(\mc T\) is stratified it suffices to check if each local category \(\mc T / \lt (\mc P)\) has minimality at its unique closed point.
\end{remark}
\begin{remark}
    The theory of stratification is amenable to various descent techniques (see, for example \cite{descent, cosupport}). Let us close out this section with a slight modification of `quasi-finite descent' \cite[Theorem~17.16]{cosupport} that we will use throughout the paper.
\end{remark}
\begin{definition} \label{closed functor}
     Let \(\mc T\) and \(\mc S\) be rigidly-compactly generated tt-categories. A coproduct preserving tt-functor \(f^*: \mc{T} \to \mc{S}\) is called a \textbf{geometric functor}. We denote the right adjoint by \(f_*\) (which exists by Brown--Neeman Representability, see for example \cite{GrothNeemanReppaper}).
\end{definition}
\begin{proposition}\label{compacttheorem} Let \( f^*: \mc T \to \mc S\)  be a geometric functor and let \(\varphi:=Spc(f^*) : Spc(\mc S^c) \to Spc(\mc T^c)\) denote the induced map on spectra. Assume that: \begin{enumerate}
    \item \(\mc T\) is local and satisfies the detection property;
    \item \(f_*(\unit_{\mc S}) \in \mc T\) is compact;
    \item There exists a unique \(\mc P \in Spc(\mathcal{S}^c)\) such that \(\varphi(\mc P)= \MM\) is the unique closed point in \(Spc(\mc T^c)\).
\end{enumerate}
Then minimality at \(\mc P\) implies minimality at \(\MM\). 
\end{proposition}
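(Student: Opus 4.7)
The plan is to transfer minimality at $\mc P$ in $\mc S$ back to minimality at $\MM$ in $\mc T$ by chasing a nonzero element through the adjunction $f^* \dashv f_*$, in the spirit of quasi-finite descent. By the equivalence (a) $\Leftrightarrow$ (b) of Theorem~\ref{strattheorem}, minimality at $\MM$ is the statement that every nonzero $t \in Locid(g(\MM))$ satisfies $g(\MM) \in Locid(t)$. So I would fix such a nonzero $t$ and work to establish this.

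First I would pull back. Because $f^*$ is a tt-functor, naturality of Balmer--Favi support yields $Supp_{\mc S}(f^*(t)) \subseteq \varphi^{-1}(Supp(t)) \subseteq \varphi^{-1}(\{\MM\}) = \{\mc P\}$, so $f^*(t) \in Locid_{\mc S}(g(\mc P))$. Nonvanishing of $f^*(t)$ would follow from the projection formula $f_* f^*(t) \simeq t \otimes A$ with $A := f_*(\unit_{\mc S})$: since $A$ is compact by hypothesis~(2) with $\MM = \varphi(\mc P) \in \mathrm{Im}(\varphi) \subseteq supp(A)$, and since $\MM \in Supp(t)$ (by detection in the local category $\mc T$, using that $t$ is nonzero and $Supp(t) \subseteq \{\MM\}$ for $t \in Locid(g(\MM))$), one derives $t \otimes A \neq 0$, and hence $f^*(t) \neq 0$. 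The minimality hypothesis at $\mc P$ then forces $g(\mc P) \in Locid_{\mc S}(f^*(t))$.

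Next I would push back. Compactness of $A$ ensures $f^*$ preserves compact objects and hence $f_*$ preserves coproducts; combined with the projection formula $f_*(s \otimes f^*(t)) \simeq f_*(s) \otimes t$, this shows that $f_*$ carries $Locid_{\mc S}(f^*(t))$ into $Locid_{\mc T}(t)$. Since $f^*(g(\MM)) \in Locid_{\mc S}(g(\mc P)) = Locid_{\mc S}(f^*(t))$ (its support lies in $\{\mc P\}$), this gives $g(\MM) \otimes A \simeq f_* f^*(g(\MM)) \in Locid_{\mc T}(t)$.

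The hard part will be the final step: extracting $g(\MM)$ itself from $g(\MM) \otimes A$ inside $Locid(t)$. My plan here is to exploit the structure of $g(\MM)$ as a filtered colimit of compact objects with support in $\{\MM\}$, which is available because $\{\MM\}$ is Thomason, $\MM$ being the unique closed point of $\mc T$. The thick tensor ideal $Locid(g(\MM) \otimes A) \cap \mc T^c$ is determined by its Thomason support, and hypothesis~(2) combined with $\MM \in supp(A)$ should force this support to contain $\{\MM\}$; consequently it exhausts every compact object supported in $\{\MM\}$, and passing to the colimit yields $g(\MM) \in Locid(g(\MM) \otimes A) \subseteq Locid(t)$. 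The two hypotheses work in concert: compactness of $f_*(\unit_{\mc S})$ controls how colimits and thick ideal structure pass through $f_*$, while uniqueness of the preimage $\mc P$ concentrates all supports at $\MM$.
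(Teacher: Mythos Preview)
Your overall strategy is the paper's: show $f^*(t) \ne 0$ via the projection formula and compactness of $A := f_*(\unit_{\mc S})$, invoke minimality at $\mc P$, push forward to place $g(\MM) \otimes A \simeq f_*f^*(g(\MM))$ inside $\lt(t)$, and finally extract $g(\MM)$. There is, however, a genuine gap in how you enter $\lt_{\mc S}(g(\mc P))$. The inference ``$Supp_{\mc S}(f^*(t)) \subseteq \{\mc P\}$, so $f^*(t) \in \lt_{\mc S}(g(\mc P))$'' (and the parallel claim for $f^*(g(\MM))$) is not valid in general: containment of supports does not imply containment of localizing ideals without the local-to-global principle on $\mc S$, and that is not among the hypotheses. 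The paper avoids this entirely by a purely formal argument: since $f^*$ is a coproduct-preserving tt-functor, $t \in \lt_{\mc T}(g(\MM))$ yields $f^*(t) \in \lt_{\mc S}(f^*(g(\MM)))$ automatically, and then one invokes the identity $f^*(g(\MM)) = g(\varphi^{-1}(\{\MM\})) = g(\mc P)$ from \cite[Proposition~5.11]{spectrumofequivstablehomotopy}. This single identity also handles your later need for $f^*(g(\MM)) \in \lt_{\mc S}(f^*(t))$.

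Two smaller points. The phrase ``compactness of $A$ ensures $f^*$ preserves compact objects'' is backwards: $f^*$ preserves compacts for any geometric functor between rigidly-compactly generated tt-categories, and it is this (not compactness of $A$) that makes $f_*$ coproduct-preserving. And your ``hard part'' is precisely \cite[Lemma~3.7]{BHSStrat}, which the paper simply cites: for compact $A$ with $\MM \in supp(A)$ one has $g(\MM) \in \lt(g(\MM) \otimes A)$. Your colimit sketch is headed in the right direction but, as written, does not explain why $\lt(g(\MM) \otimes A) \cap \mc T^c$ contains anything nonzero; the missing observation is that any compact $x$ with $supp(x) \subseteq \{\MM\} \subseteq supp(A)$ already lies in the thick tensor ideal of $\mc T^c$ generated by $A$, whence $x \simeq g(\MM) \otimes x \in \lt(g(\MM) \otimes A)$.
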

\begin{proof}
Our first step will be to show that \(f^*\) is conservative on all objects supported on \(\MM\). Let \(Supp(t) \subseteq \{\MM\)\} and suppose \(f^*(t) = 0\). Then \(0=f_*f^*(t) \simeq f_*(\unit_{\mc S}) \otimes t \). Hence \[\varnothing = Supp(f_*(\unit) \otimes t)= supp(f_*(\unit_{\mc S})) \cap Supp(t) \]
where the second equality holds from the half-tensor product formula (see \cite[Lemma~2.18]{BHSStrat}). Since \(\mc S \ne 0\) we have that \(\unit_{\mc S} = f^*(\unit_{\mc T}) \ne 0\) and so \(f_*(\unit_{\mc S}) \ne 0\) since \(f_*\) is conservative on the image of \(f^*\) (see \cite[Remark~13.9]{cosupport}). Now \(supp(f_*(\unit_{\mc S}))\) is closed since we are assuming \(f_*(\unit_{\mc S})\) is compact, and so it contains the unique closed point \(\MM\), which forces \(Supp(t)=\varnothing\). Hence we conclude that \(t=0\) since \(\mc T\) satisfies the detection property. This establishes that \(f^*\) is conservative on on all objects supported on \(\MM\). Now suppose \(0 \ne t \in \lt(\eM) \) and note that \[0 \ne f^*(t) \in \lt(f^*(\eM))= \lt (g(\mc P))\] where \(f^*(\eM) = g(\varphi^{-1}(\{\MM\}) ) = g(\mc P)\) by \cite[Proposition~5.11]{spectrumofequivstablehomotopy} and by the hypothesis on the pre-image of \(\{\MM\}\). Since we are assuming minimality at \(\mc P\) we have an equality \[\lt(f^*(t))= \lt(f^*(g(\MM))).\] 
Hence we get \(f_*f^*(\eM) \in f_*(\lt(f^*(t) )) \subseteq \lt (t)\) by \cite[13.4]{cosupport}. Finally we compute: \begin{align*}
    \eM &\in \lt(\eM \otimes f_*(\unit)) \text{   (by \cite[Lemma~3.7]{BHSStrat})}\\
    & = \lt (f_*f^*(\eM))\\
    & \subseteq \lt(t)
\end{align*}
which establishes minimality at the prime \(\MM\) as desired.
\end{proof}

\section{Derived Categories of Motives}\label{motivesinfosection}
In this section we recall the computation of the Balmer spectra of certain motivic categories by Martin Gallauer. We will follow \cite{motivesbook, etalemotives} for the general theory of motives, and will also refer heavily to \cite{martinspecpaper} and the references therein. 
\begin{definition}
    We fix a commutative ring \(R\), and the field \(\olQ\). The derived category of motives over \(\olQ\) is the tt-category, denoted \(DM(\olQ, R)\), that is constructed out of the derived category of Nisnevich sheaves with transfers of \(R\)--modules on the category of smooth, finite type \(\olQ\)--schemes; see \cite[11.1.1, 11.1.2]{motivesbook} for further details.
\end{definition}
\begin{remark}
    This category comes equipped with a functor \[R(-): Sm/\olQ \to DM(\olQ, R)\] that sends a smooth \(\olQ\)-scheme \(X\) to its underlying `motive', which is denoted \(R(X)\). Moreover, the construction of \(DM(\olQ, R)\) inverts the motive of the projective line, which we denote by \(R(1)\). For any integer~\(n\), we then denote the `Tate-twist' of weight \(n\) by \(R(n): = R(1)^{\otimes n}\), and one gets that \(R(i) \otimes R(j) = R(i+j)\) for all \(i, j \in \ZZ\).
\end{remark} 
\begin{definition}
    The category of Tate motives, denoted~\(DTM(\olQ, R)\), is defined as \(DTM(\olQ, R) = Loc(R(n): n \in \ZZ)\). It is a rigidly-compactly generated tensor-triangulated category; see for example \cite[Example~5.17]{charofetalmorphism} and the references therein. 
\end{definition}
\begin{remark}\label{if R' in D(R)^c then strongly closed}
    Given a ring homomorphism \(R \to R'\) there is an induced geometric functor \(\gamma^*: DM(\olQ, R) \to DM(\olQ, R') \) 
which restricts to a geometric functor on Tate motives  \(\gamma^*: DTM(\olQ, R) \to DTM(\olQ, R') \). 
The right adjoint \(\gamma_*\) of this geometric functor is always conservative (see \cite[5.4.2]{etalemotives}). Moreover, if \(R' \in D(R)^c\), then \(\gamma_*\) preserves compact objects \cite[\S3]{martinspecpaper}.
\end{remark}
There is a similar, parallel story to tell about the so-called étale motives:  
\begin{definition}
    We again fix a commutative ring \(R\), and the field \(\olQ\). The derived category of \text{étale} motives over \(\olQ\) is the tt-category denoted \(DM^{\text{ét}}(\olQ, R)\), that is constructed out of the derived category of \text{étale} sheaves with transfers of \(R\)--modules on the category of smooth , finite type \(\olQ\) schemes; see \cite{etalemotives} and \cite[\S4]{martinspecpaper} for further details.
\end{definition}
This category again comes equipped with a `\text{étale}' motive functor \[R^{\text{ét}}(-): Sm/\olQ \to \DMe(\olQ, R)\]
and we again have invertible étale Tate objects \(R^{\text{ét}}(n)\) arising from the étale motive of the projective line.
\begin{definition}
    The derived category of \text{étale} Tate motives is the tt-category \(\DTMe(\olQ,R)\) = \(Loc(R^{\text{ét}}(n): n \in \ZZ)\). When \(R\) is any localization or quotient of \(\ZZ\), this category is also rigidly-compactly generated (see \cite[\S4]{martinspecpaper}).
\end{definition}
\begin{remark}
Given a ring map \(R \to R'\), we again have a geometric functor \(\gamma^*: D(T)M^{\text{ét}}(\olQ, R) \to D(T)M^{\text{ét}}(\olQ, R')\) with conservative right adjoint \(\gamma_*\) which preserves compacts whenever \(R' \in D(R)^c\). 
\end{remark}
\begin{remark} \label{etalification functor}
    Essentially since every Nisnevich sheaf is also an \text{étale} sheaf, there is a canonical `\text{étalification}' tt-functor \[\alpha_{\text{ét}}: DTM(\olQ,R) \to \DTMe(\olQ, R)\]
that commutes with change of coefficients. Moreover, whenever \(\QQ \subseteq R\) this functor is an equivalence of categories \cite[16.1.2]{motivesbook}.
\end{remark}
An important result by Gallauer provides another deep connection between étale Tate motives and Tate motives:
\begin{theorem}\cite[Theorem~C.4]{martinspecpaper}\label{locgivesetal}
Let \(p\) be a prime number and let \(\beta_p: \ZZ/p(0) \to \ZZ/p(1)\) denote the Bott map. Consider the geometric functor \[\gamma^*: \dtmp \to DTM(\olQ, \ZZ/p)\]
induced by the ring map \(\ZZ_{(p)} \to \ZZ/p\) and let \[\gamma_*: DTM(\olQ, \ZZ/p) \to \dtmp \]
denote its right adjoint. Then the étalification functor \[\alpha_{\text{ét}}: DTM(\olQ,\ZZ_{(p)}) \to \DTMe(\olQ, \ZZ_{(p)})\]
is a finite localization with kernel \(\lt (\gamma_*(cone(\beta_p)))\). In particular, we have an equivalence \[\dtmp/ \lt( \gamma_*(cone(\beta_p))) \simeq \dtmep \]
\end{theorem}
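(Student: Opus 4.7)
Set $\mc K := \lt(\gamma_*(\text{cone}(\beta_p)))$. Since $\ZZ/p \in D(\ZZ_{(p)})^c$, Remark~\ref{if R' in D(R)^c then strongly closed} ensures $\gamma_*$ preserves compactness, so $\mc K$ is a compactly generated localizing tensor ideal with associated finite localization $L: \dtmp \to \dtmp/\mc K$. My plan is to identify $\alpha_{\text{ét}}$ with $L$ by proving two things: (a) $\alpha_{\text{ét}}$ annihilates $\mc K$, producing a canonical factorization $\alpha_{\text{ét}} = \overline{\alpha} \circ L$; and (b) the induced functor $\overline{\alpha}: \dtmp/\mc K \to \dtmep$ is an equivalence.

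For (a), I would first promote the compatibility of étalification with change of coefficients (Remark~\ref{etalification functor}) to a natural isomorphism $\alpha_{\text{ét}} \circ \gamma_* \simeq \gamma_* \circ \alpha_{\text{ét}}$ by a Beck--Chevalley / mate argument; intuitively, both functors interpret an étale $\ZZ/p$-sheaf on $Sm/\olQ$ as an étale $\ZZ_{(p)}$-sheaf. Next, invoking the Rigidity Theorem for étale motives with finite coefficients, every Tate twist in $\dtmemodp$ is trivial, so $\alpha_{\text{ét}}(\beta_p)$ is an isomorphism and hence $\alpha_{\text{ét}}(\text{cone}(\beta_p)) = 0$. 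Combining these gives $\alpha_{\text{ét}}(\gamma_*(\text{cone}(\beta_p))) \simeq \gamma_*(0) = 0$, which yields (a).

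For (b), $\overline{\alpha}$ is a coproduct-preserving tt-functor between compactly generated tt-categories with matching compact generators (the Tate twists $\ZZ_{(p)}(n)$), so it is an equivalence iff it induces isomorphisms on Hom-groups between those generators. I would split this via the arithmetic fracture square for $\ZZ_{(p)}$ into rational and mod-$p$ verifications. Rationally, $\alpha_{\text{ét}}$ is already an equivalence (Remark~\ref{etalification functor}) and $\mc K$ is $p$-power torsion (since $\text{cone}(\beta_p)$ is a $\ZZ/p$-module), so the rational claim is immediate. The mod-$p$ part reduces via $\gamma^*$ to the assertion that $\dtmemodp$ is the Bott-inverted category $\dtmmodp[\beta_p^{-1}]$, i.e., a motivic form of Thomason's Bott-inversion theorem.

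The main obstacle is this mod-$p$ Bott-inversion step: it encodes the Beilinson--Lichtenbaum conjecture (now a theorem of Voevodsky--Rost) in motivic language and is the only genuinely deep input. Everything else is formal once one has rigidity, the change-of-coefficients compatibilities, and the standard dictionary between finite localizations and compactly generated localizing ideals.
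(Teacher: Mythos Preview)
The paper does not prove this statement itself; it is quoted verbatim from \cite[Theorem~C.4]{martinspecpaper} and used as a black box, so there is no proof in the paper to compare your proposal against.

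That said, your outline matches the shape of Gallauer's argument: the genuinely deep input is the Beilinson--Lichtenbaum comparison (equivalently, the motivic Bott-inversion statement $\dtmmodp[\beta_p^{-1}] \simeq \dtmemodp$), and everything else is formal manipulation of finite localizations, arithmetic fracture, and change of coefficients. One point where your sketch is loose is the Beck--Chevalley step asserting $\alpha_{\text{\'et}} \circ \gamma_* \simeq \gamma_* \circ \alpha_{\text{\'et}}$. Remark~\ref{etalification functor} only records that \'etalification commutes with the \emph{left} adjoints $\gamma^*$; passing to the mixed left/right square is not a purely formal mate argument and requires justification. You would need either to check the Beck--Chevalley condition directly (for instance using that $\gamma_*(\unit)$ is compact, so $\gamma_*$ is given by tensoring with a dualizable object and hence commutes with any tt-functor), or to bypass it via the projection formula $\gamma_*(\text{cone}(\beta_p)) \simeq \gamma_*(\unit) \otimes (\text{a lift of } \text{cone}(\beta_p))$ and then kill the lifted cone after \'etalification. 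Either route works, but as written step~(a) has a gap.
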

Armed with this, we can now describe Gallauer's computations of the Balmer spectra of Tate motives. Let us first state the spectrum for finite coefficients:
\begin{proposition}\cite[Proposition~8.2]{martinspecpaper}\label{specdtmfinite}
Let \(\beta_p: \ZZ/p(0) \to \ZZ/p(1)\) denote the Bott map. Then \(Spc(DTM(\olQ, \ZZ/p)^c)\) consists of two connected points:
\[\begin{tikzcd}[column sep=tiny,row sep=small]
	{Spc(DTM(\olQ, \ZZ/p)^c)}= & (0) \\
	& \langle cone(\beta_p) \rangle 
	\arrow[no head, from=1-2, to=2-2]
\end{tikzcd}\]
\end{proposition}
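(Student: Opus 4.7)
The plan is to exhibit the two claimed primes via the étalification finite localization and then argue there are no others.

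For $\langle cone(\beta_p)\rangle$: By a $\ZZ/p$-coefficient analog of Theorem~\ref{locgivesetal} (obtainable from the $\ZZ_{(p)}$-version by applying $-\otimes^L_{\ZZ_{(p)}} \ZZ/p$), the étalification $\alpha_{\text{ét}}\colon \dtmmodp \to \dtmemodp$ is a finite localization whose kernel on compact objects is the thick $\otimes$-ideal $\langle cone(\beta_p)\rangle$. By the Rigidity Theorem (Theorem~\ref{rigiditytheorem}), the target is equivalent to $D(\ZZ/p)$, which has a single-point Balmer spectrum. The induced open immersion on spectra then identifies the subset $\{\mc P \in Spc(\dtmmodp^c) : cone(\beta_p) \in \mc P\}$ with this singleton, which must therefore equal $\{\langle cone(\beta_p)\rangle\}$; in particular $\langle cone(\beta_p)\rangle$ is genuinely a prime.

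For the prime $(0)$ and to rule out any intermediate primes: the closed complement $supp(cone(\beta_p))$ is nonempty since $cone(\beta_p)\ne 0$, so at least one further prime exists. To pin this complement down as $\{(0)\}$, I would leverage the bigraded endomorphism ring of the unit. By Milnor--Bloch--Kato (the norm residue isomorphism) applied to motivic cohomology of an algebraically closed field at a prime, $\End^{*,*}(\unit) \cong \ZZ/p[\beta_p]$ is a polynomial ring on the Bott element in bidegree $(0,1)$, so its homogeneous prime spectrum has exactly two points. Balmer's comparison map into this homogeneous spectrum is continuous and surjective, and to upgrade it to a bijection I would invoke the concrete description of $\dtmmodp^c$ as (equivalent to) the bounded derived category of filtered $\ZZ/p$-vector spaces developed later in \S\ref{m_psection}; in that concrete category one can enumerate all thick $\otimes$-ideals directly.

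The main obstacle is establishing injectivity of the comparison map, i.e., ruling out intermediate primes. The endomorphism ring by itself only supplies an upper bound on the size of the spectrum via Balmer's surjection, and one needs the filtered-vector-space description to pin down the full list of thick $\otimes$-ideals in $\dtmmodp^c$. Once this is done, the connectedness of the two points follows automatically from the strict containment $(0) \subsetneq \langle cone(\beta_p)\rangle$, which yields the specialization $\langle cone(\beta_p)\rangle \rightsquigarrow (0)$ in Balmer's topology.
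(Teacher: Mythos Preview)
The paper does not supply its own proof of this proposition; it is quoted verbatim as a background result from Gallauer \cite[Proposition~8.2]{martinspecpaper}. There is therefore no in-paper argument to compare your proposal against.

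That said, your outline is broadly sound. The \'etalification step is correct: since $\alpha_{\text{\'et}}$ is a finite localization with kernel $\langle cone(\beta_p)\rangle$ and target equivalent to $D(\ZZ/p)$, the map on spectra is an open immersion onto a single point, and that point is $Spc(\alpha_{\text{\'et}})((0)) = \alpha_{\text{\'et}}^{-1}((0)) = \langle cone(\beta_p)\rangle$, so this ideal is indeed prime. The remaining task is to show the closed complement $supp(cone(\beta_p))$ is the single point $(0)$.

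Two comments on how you close the argument. First, invoking the filtered-vector-space equivalence from \S\ref{m_psection} is legitimate (it is also a cited result of Gallauer, logically independent of the spectrum computation), but once you have $\dtmmodp^c \simeq D(Mod_{gr}(\ZZ/p[\beta]))^c$, the Dell'Ambrogio--Stevenson computation \cite{derivedcatofgradedring} gives the entire two-point spectrum at once; the Milnor--Bloch--Kato and comparison-map detour becomes redundant. Second, a small caution: you should check that Gallauer's own proof of \cite[Proposition~8.2]{martinspecpaper} does not itself rely on the filtered description, lest you import a circularity from the cited paper. In fact his argument proceeds in the other direction (computing the spectrum first, then matching it with the filtered picture), so your shortcut, while correct for the present paper's logical structure, reverses the order of the original source.
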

The last result needed before the main computation by Gallauer is the following computation of the spectrum with rational coefficients in \cite[Theorem~4.15]{peterspecQ}: 
\begin{theorem}[Peter]
The Balmer spectrum of rational motives is \[Spc(DTM(\olQ, \QQ)^c) \simeq Spec(\QQ) \simeq (0)\]
\end{theorem}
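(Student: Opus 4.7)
The goal is to show that $Spc(\dtmq^c)$ is a single point. Since $\End_{\dtmq}(\unit) = \QQ$ is a field, the canonical comparison map $\rho : Spc(\dtmq^c) \to Spec(\QQ) = \{*\}$ is automatically surjective onto a single point, so the task reduces to showing that $\dtmq^c$ has no nontrivial prime tt-ideals. Equivalently, I would show that for every nonzero compact object $x \in \dtmq^c$, the thick tensor ideal generated by $x$ is all of $\dtmq^c$.

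The plan is to exploit two special features of rational Tate motives over $\olQ$. First, because $\QQ \subseteq \QQ$, the \'etalification functor of Remark~\ref{etalification functor} is an equivalence $\dtmq \isomap \DTMe(\olQ, \QQ)$, so one may pass freely between the two models and use whichever is more convenient. Second, since $\olQ$ satisfies Beilinson--Soul\'e vanishing with rational coefficients, Levine's theorem endows $\dtmq$ with a motivic $t$-structure whose heart is the abelian category of mixed Tate motives; moreover, every object of this heart admits a finite weight filtration whose associated graded pieces are direct sums of Tate twists $\QQ(n)$, and the pure Tate category over $\olQ$ with $\QQ$-coefficients is semisimple.

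Given these inputs, the argument for any nonzero compact $x \in \dtmq^c$ would proceed as follows. Truncation in the motivic $t$-structure places the nonzero cohomology objects $H^i(x)$ in the thick tt-ideal generated by $x$. A d\'evissage along the weight filtration of some $H^i(x) \neq 0$, using semisimplicity of the pure part, produces a nonzero summand of the form $\QQ(n)[m]$ inside the thick tt-closure of $x$. Since $\QQ(n)$ is $\otimes$-invertible with inverse $\QQ(-n)$, the thick tt-ideal generated by $\QQ(n)[m]$ is already all of $\dtmq^c$, and hence so is the one generated by $x$.

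The main obstacle is the d\'evissage step: while the $t$-structure and weight filtration supply the combinatorial skeleton, upgrading a Tate-twist \emph{subquotient} appearing in $x$ to a direct \emph{summand} inside the thick tt-closure of $x$ is nontrivial, and genuinely requires both semisimplicity of the pure Tate category over $\olQ$ with $\QQ$-coefficients and a careful lifting argument for the splitting idempotents through the weight filtration. Once this step is carried out, every nonzero compact generates $\dtmq^c$ as a thick tt-ideal, and the spectrum collapses to the single point $(0)$, as claimed.
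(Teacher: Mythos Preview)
The paper does not prove this statement; it is quoted from Peter \cite[Theorem~4.15]{peterspecQ}. The paper does, however, recount Peter's method in \S\ref{m_0section} (while extending it to the big category), and your outline uses the same ingredients: Levine's $t$-structure, the heart $\AT$, and the weight filtration. Two comments are in order.

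First, a small correction: the claim that ``truncation in the motivic $t$-structure places $H^i(x)$ in the thick tt-ideal generated by $x$'' is true, but not because of truncation. The truncation triangle $\tau_{\le n}x\to x\to \tau_{>n}x$ alone does not force either truncation into $\langle x\rangle$. What actually works is that the $t$-structure is \emph{strongly hereditary} (Theorem~\ref{big t structure theorem}(a)), so on compacts one has $x\simeq\bigoplus_i\Sigma^{-i}H^i(x)$, exhibiting each $H^i(x)$ as a retract of a shift of $x$.

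Second, and more substantively, your proposed resolution of the d\'evissage obstacle fails. Semisimplicity of the pure part is genuine (each $\mc T_a\simeq D(\QQ)$), but the weight filtration of a mixed object $M\in\AT$ does \emph{not} split: for instance $Ext^1_{\AT}(\QQ(0),\QQ(1))\cong \olQ^\times\otimes\QQ\neq 0$, so there are no splitting idempotents to lift. Peter avoids this entirely. Instead of extracting a Tate-twist summand from $M$, he sets up a bijection between thick $\otimes$-ideals of $\dtmq^c$ and coherent $\otimes$-ideals of $\AT$ (via $X\mapsto\bigoplus_i H^i(X)$; cf.\ Proposition~\ref{bijectionoflocidealsDTMAT} for the big version), and then uses Biglari's tensor functor $gr=\bigoplus_a gr_a[-a]$, which identifies $\AT$ with a tensor subcategory of $Gr_\ZZ(Vect_\QQ)$. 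Since graded $\QQ$-vector spaces have no nontrivial coherent $\otimes$-ideals, neither does $\AT$, and the spectrum collapses. The point is that $gr$ is a \emph{tensor} functor that kills the non-split extension data; no splitting of the weight filtration inside $\dtmq$ is ever needed.
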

We can now bring these results together for integral coefficients \cite[Theorem~8.6]{martinspecpaper}: 
\begin{theorem}[Gallauer] \label{maintheoremmartin}
The Balmer spectrum of \(DTM(\olQ, \ZZ)^c\) is the following noetherian topological space 
\[\begin{tikzcd}
	& {m_2} & {m_3} & \cdots & {m_p} & \cdots \\
	  &{e_2} & {e_3} & \cdots & {e_p} & \cdots \\
	&&& {m_0}
	\arrow[no head, from=3-4, to=2-2]
	\arrow[no head, from=2-3, to=3-4]
	\arrow[no head, from=3-4, to=2-5]
	\arrow[no head, from=2-5, to=1-5]
	\arrow[no head, from=1-3, to=2-3]
	\arrow[no head, from=1-2, to=2-2]
\end{tikzcd}\]
where the \(p\)'s are prime numbers and the specialization relations are depicted by the lines going upwards. The primes are given as follows: \begin{enumerate}
    \item [(height 2)]: \(m_p\) is the kernel of the tt-functor \[\gamma^*_p: DTM(\ol{\QQ}, \ZZ)^c \to DTM(\ol{\QQ}, \ZZ/p\ZZ)^c\] induced from the ring map \(\ZZ \to \ZZ/p\).
    \item [(height 1)]: \(e_p\) is the kernel of the following composite \[DTM(\ol{\QQ}, \ZZ)^c \xrightarrow{\gamma^*_p} DTM(\ol{\QQ}, \ZZ/p\ZZ)^c \xrightarrow{\alpha_{\text{ét}}} DTM^{\text{ét}}(\ol{\QQ}, \ZZ/p\ZZ)^c.\] 
     \item [(height 0)]: \(m_0\) is the kernel of the finite localization \[\gamma^*_{\QQ}: DTM(\ol{\QQ}, \ZZ)^c \to  DTM(\ol{\QQ}, \QQ)^c\] induced from the ring localization \(\ZZ \to \QQ\).
\end{enumerate}
Moreover, the étalification tt-functor \(\alpha_{\text{ét}}: DTM(\olQ,\ZZ) \to \DTMe(\olQ, \ZZ)\) induces an inclusion on spectra \[Spec(\ZZ) \simeq Spc(\DTMe(\olQ, \ZZ)^c) \lhook\joinrel\xrightarrow{Spc(\alpha_{\text{ét}})}  Spc(DTM(\olQ, \ZZ)^c)\] which is a homeomorphism onto the subspace \(\{m_0, e_p: \text{ p prime}\}\).
\end{theorem}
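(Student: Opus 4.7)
The plan is to compute the spectrum by combining the known computations for rational and mod-$p$ coefficients via the comparison map
\[\rho: Spc(DTM(\olQ, \ZZ)^c) \longrightarrow \Spec(\End(\unit)) = \Spec(\ZZ).\]
First, each of $m_0$, $e_p$ and $m_p$ is visibly a prime of $DTM(\olQ, \ZZ)^c$: being the kernel of a tt-functor on compacts whose target has a well-understood Balmer spectrum, each is the pre-image of a specific prime there. Specifically, $m_0$ corresponds to the unique point of $Spc(DTM(\olQ, \QQ)^c)$ supplied by Peter's theorem, while $m_p$ and $e_p$ correspond to the two points of $Spc(DTM(\olQ, \ZZ/p)^c)$ supplied by Proposition~\ref{specdtmfinite}. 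The containment $m_p \subseteq e_p$ (and hence the specialization $m_p \in \overline{\{e_p\}}$) is immediate from the factorization of $\alpha_{\text{ét}} \circ \gamma^*_p$ through $\gamma^*_p$; the containment $e_p \subseteq m_0$ is subtler and will emerge from the fiber analysis of $\rho$ below.

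Next I would verify that these primes are pairwise distinct. For distinct primes $p \ne q$, the integral Moore object $\text{cone}(p \cdot \id_\unit)$ separates $m_p$ and $e_p$ from $m_q$ and $e_q$, while the Bott cones lifted to $\ZZ_{(p)}$ via $\gamma_*(\text{cone}(\beta_p))$ distinguish $m_p$ from $e_p$ within a fixed characteristic.

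The substantive step is to show these exhaust $Spc(DTM(\olQ, \ZZ)^c)$ via a fiber analysis of $\rho$. The fiber over $(0)$ is controlled by the finite localization $\gamma^*_{\QQ}$, which induces an open embedding $Spc(DTM(\olQ, \QQ)^c) \hookrightarrow Spc(DTM(\olQ, \ZZ)^c)$; by Peter's theorem its image is the singleton $\{m_0\}$ and this coincides with the whole fiber over $(0)$. The fiber over $(p)$ is controlled by $\gamma^*_p$: its right adjoint preserves compacts since $\ZZ/p \in D(\ZZ)^c$ (see Remark~\ref{if R' in D(R)^c then strongly closed}), so the induced map $Spc(DTM(\olQ, \ZZ/p)^c) \hookrightarrow Spc(DTM(\olQ, \ZZ)^c)$ is injective with image in $\rho^{-1}((p))$, and by Proposition~\ref{specdtmfinite} this image contributes exactly two points, identifiable as $m_p$ and $e_p$ with $e_p$ matching $\langle \text{cone}(\beta_p) \rangle$ via Theorem~\ref{locgivesetal}.

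The main obstacle I foresee is establishing that these spectral maps are \emph{surjective} onto their fibers: no exotic primes live over $(p)$ beyond $\{m_p, e_p\}$, nor over $(0)$ beyond $\{m_0\}$. This amounts to showing that every Balmer prime over $(p)$ is automatically pulled back from mod-$p$ coefficients, which I would tackle by exploiting that $p$ acts nilpotently in the tt-residue field at such a prime, reducing the analysis to the known two-point mod-$p$ spectrum. Once the underlying bijection of sets is established, the topology (including the remaining containment $e_p \subseteq m_0$) follows from noting that $\{e_p, m_p\}$ sits over the closed point $(p) \in \Spec(\ZZ)$ and so must specialize from $m_0$, which sits over the generic point. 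Finally, the statement about the image of $Spc(\alpha_{\text{ét}})$ is obtained by separately analyzing rational coefficients (where $\alpha_{\text{ét}}$ is an equivalence by Remark~\ref{etalification functor}) and mod-$p$ coefficients (where Theorem~\ref{locgivesetal} realizes $\alpha_{\text{ét}}$ as a finite localization collapsing $\{m_p, e_p\}$ to $\{e_p\}$), yielding the claimed image $\{m_0\} \cup \{e_p : p \text{ prime}\}$.
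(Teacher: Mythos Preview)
The paper does not supply its own proof of this theorem; it is quoted as a known result of Gallauer with a citation to \cite[Theorem~8.6]{martinspecpaper}, and no argument is reproduced. There is therefore nothing in the present paper to compare your proposal against.

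That said, your outline matches the strategy the paper alludes to in its introduction: analyze the comparison map $\rho\colon Spc(DTM(\olQ,\ZZ)^c)\to \Spec(\ZZ)$ fiber by fiber, using Peter's computation over $(0)$ and the two-point mod-$p$ spectrum over each $(p)$. Your identification of the main obstacle---surjectivity of the induced maps onto the respective fibers, i.e.\ ruling out exotic primes---is accurate, and this is exactly where Gallauer's argument does the real work (invoking the vanishing hypothesis \cite[Hypothesis~6.6]{martinspecpaper} for the fiber over $(0)$). One small point: your deduction of the specialization $e_p \subseteq m_0$ from the fact that $e_p$ lies over the closed point $(p)$ while $m_0$ lies over the generic point is not automatic from continuity of $\rho$ alone; Gallauer handles this via an explicit analysis of the relevant kernels rather than by a purely topological argument.
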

\begin{remark}
    Since \(Spc(DTM(\olQ, \ZZ)^c)\) is noetherian, it automatically satisfies the local-to-global principle (recall Remark \ref{localglobalnoeth}). The rest of this paper will therefore be focused on proving minimality holds at each prime. 
\end{remark}
\begin{remark}\label{3difftypesofprimes}
    We note that, while there are infinitely many primes in ~\(Spc(DTM(\olQ, \ZZ)^c)\), morally there are only 3 `types' of primes. Indeed, each vertical slice in the spectrum is obtained from a finite `algebraic' localization \(DTM(\olQ, \ZZ) \to \dtmp\) (\cite[Example~6.12,~Proposition~10.2]{martinspecpaper}). These are local categories, with \(m_p\) (resp, \(e_p\)) being the \((0)\) ideal now. Pictorially, for example, we have \[\begin{tikzcd}[column sep=tiny,row sep=small]
	& {m_p} = (0) \\
	{Spc(\dtmp^c)}= & {e_p} \\
	& {m_0}
	\arrow[no head, from=1-2, to=2-2]
	\arrow[no head, from=2-2, to=3-2]
\end{tikzcd}\]
Moreover, by \cite[Proposition~1.32]{BHSStrat} in order to prove minimality holds at the primes \(m_p\) and \(e_p\) it suffices to prove it in the local category \(\dtmp\). 
\end{remark}
\section{Minimality at the Height Zero Prime}\label{m_0section}
In this section, we will prove minimality holds at the the height zero prime \(m_0\).
\begin{remark}\label{height0reduction}
    Recall that the height zero prime \(m_0\) is obtained from the algebraic localization going from integral to rational coefficients. That is  \[DTM(\olQ, \ZZ)/\lt (m_0) \simeq DTM(\olQ, \QQ)\]
By Theorem \ref{localresult}, to prove that minimality holds at the prime \(m_0\) it suffices to prove minimality holds at the unique prime in \(DTM(\olQ, \QQ)\). Let us now recall the computation of \(Spc(DTM(\olQ, \QQ)^c)\) by Peter \cite{peterspecQ}. 
\end{remark}
\begin{remark}
    To orient the reader for why we are interested in the following definitions: our goal will be to eventually produce a triangulated equivalence between \(DTM(\olQ, \QQ)\) and the derived category of the heart of a t-structure. Such an equivalence can be given, provided the t-structure satisfies extra properties, which we now explain.
\end{remark}
\begin{definition}\label{tstructure}
Let \(\textbf{t}= (\mc U, \mc V)\) denote a t-structure on a rigidly-compactly generated tt-category \(\mc T\) and let us denote the heart by \(\mc T^\text{\Heart}\). The t-structure \(\textbf{t}\) is said to be: \begin{enumerate}
    \item [(a)] \textbf{Non Degenerate} if \[\bigcap_{k \in \ZZ} \Sigma^k \mc U= 0 = \bigcap_{k \in \ZZ} \Sigma^k \mc V \]
    \item [(b)] \textbf{Smashing} if the coaisle \(\mc V\) is closed under coproducts. 
    \item [(c)] \textbf{Compatible} if \(\heart \otimes \heart \subseteq \heart\) and \(\unit \in \heart\). In this case \(\heart\) is itself a tensor-abelian category, whose tensor structure is exact in both variables. Moreover the unit of \(\heart\) is just the same unit as in \(\mc T\) (see \cite[Remark~3.7]{peterspecQ}).
    \item [(d)] \textbf{Strongly hereditary} if \(hom_{\mc T}(X,\Sigma^i Y) = 0\) for \(X, Y \in \heart\) and \(i \ge 2\). Note that if \(\textbf{t}\) is strongly hereditary then \(\heart\) is a hereditary abelian category, that is \(Ext^i(M,N) = 0\) for all \(i \ge 2\) and \(M,N \in \heart\), see \cite[Proposition~1.6]{tatemotiveststructure}.
\end{enumerate}
\end{definition}
\begin{remark}\label{smashing t struct}
    A t-structure \(\textbf{t}\) is smashing if and only if the cohomology functor \(\mc H^0: \mc T \to \heart\) preserves coproducts, see for example \cite[Lemma~3.3]{smashingtstructure}. 
\end{remark}
\begin{lemma} \label{ext for hereditary cat}
     If a t-structure \(\textbf{t}\) is strongly hereditary, then, for all \(M, N \in \heart\) and for all \(i \in \ZZ\) we have \(Ext^i(M,N) \simeq hom_{\mc T}(M, \Sigma^i N)\).
\end{lemma}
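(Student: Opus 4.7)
The plan is to split into cases on $i \in \ZZ$, with only the case $i = 1$ carrying substantive content. For $i < 0$, both sides vanish: the group $Ext^i_{\heart}(M,N)$ is zero by convention, while $hom_{\mc T}(M, \Sigma^i N) = 0$ follows from the defining orthogonality of the t-structure $(\mc U, \mc V)$, since $M$ lies in the aisle and $\Sigma^i N$ lies in a strict shift of the coaisle. For $i = 0$ the identification $Ext^0_{\heart}(M,N) = hom_{\heart}(M,N) = hom_{\mc T}(M,N)$ is immediate from $\heart$ being a full subcategory of $\mc T$. For $i \ge 2$ both sides vanish: the right side by the very definition of strongly hereditary recalled in Definition~\ref{tstructure}(d), and the left side because $\heart$ is hereditary as an abelian category, as noted in the same definition.

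The main case is $i = 1$, and here the plan is to produce an isomorphism $Ext^1_{\heart}(M,N) \isomap hom_{\mc T}(M, \Sigma N)$ by hand. In the forward direction, any short exact sequence $0 \to N \to E \to M \to 0$ in $\heart$ extends canonically to a distinguished triangle $N \to E \to M \xrightarrow{\delta} \Sigma N$ in $\mc T$, and we send the Yoneda class of the extension to $\delta$. In the reverse direction, given $f \colon M \to \Sigma N$, we form the fiber triangle $N \to E \to M \xrightarrow{f} \Sigma N$ in $\mc T$; applying the cohomological functor $H^0 \colon \mc T \to \heart$ and using that $M, N \in \heart$ have no cohomology in nonzero degrees, the long exact sequence collapses to a short exact sequence $0 \to N \to H^0(E) \to M \to 0$ in $\heart$, whose Yoneda class is the desired preimage.

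The main obstacle will be verifying that these two assignments are well-defined on equivalence classes and are mutually inverse, which requires the standard bookkeeping with the octahedral axiom and with the natural transformations between truncation functors. This is a classical fact about hearts of t-structures rather than something special to the motivic setting, so I would either quote it from the literature or, if required in full, reduce it to the observation that the two procedures commute up to isomorphism of triangles, which in turn follows from the uniqueness (up to non-unique isomorphism) of the cone on a given morphism in a triangulated category.
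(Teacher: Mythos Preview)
Your proposal is correct and follows essentially the same case split as the paper: both handle $i<0$, $i=0$, and $i\ge 2$ identically, and for $i=1$ the paper simply cites the classical result from \cite{perversesheaves} rather than sketching the extension/triangle correspondence as you do. Your suggestion to quote this from the literature is exactly what the paper does.
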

\begin{proof}
    Indeed, for \( i = 0\) we have \(Ext^0(M,N) = hom_{\heart}(M,N) = hom_{\mc T}(M, N)\) since \(\heart\) is a full subcategory. For \(i = 1\) we have \(Ext^1(M,N) = hom_{\mc T}(M, \Sigma N)\) for any heart \(\heart\) (see \cite{perversesheaves}). For \(i \ge 2\) we have \(Ext^i(M,N) = 0 = hom_{\mc T}(M, \Sigma^i N)\) by assumption and since \(\heart\) is a hereditary category. Finally for \(i < 0 \) we always have \(Ext^i(M,N) = 0\) and \(hom_{\mc T}(M, \Sigma^iN) = 0\) since every t-structure \(\textbf{t}\) has this vanishing condition.
\end{proof}
\begin{remark}
    One has the following two inclusion maps \[\begin{tikzcd}
	{\heart} & {\mathcal{T}} \\
	{D(\mathcal{\heart})}
	\arrow["{i_{D(\heart)}}"', hook, from=1-1, to=2-1]
	\arrow["{i_\mathcal{T}}", hook, from=1-1, to=1-2]
\end{tikzcd}\]
A very natural question to ask is if we can extend the inclusion of \(\heart\) into \(\mc T \) into a triangulated functor on \(D(\heart)\) (called the realization functor) \[\begin{tikzcd}
	{\heart} & {\mathcal{T}} \\
	{D(\heart)}
	\arrow["{i_{D(\heart)}}"', hook, from=1-1, to=2-1]
	\arrow["{i_\mathcal{T}}", hook, from=1-1, to=1-2]
	\arrow["{\exists real}"', dotted, from=2-1, to=1-2]
\end{tikzcd}\]
in such a way that \(real(i_{D(\heart)}) \simeq i_{\mc T}\)? For the bounded derived category, an affirmative answer is reached provided there is a so-called `f-category' living above \(\mc T\) (see \cite[Appendix]{realizationforfcategories}). Such a condition is not too strict in practice, as Modoi \cite{filteredenhance} proved that every triangulated category which is the underlying category of a stable derivator admits such an f-category. In particular, as every stable model category gives rise to a stable derivator (see for example \cite[Example~1.11]{moritatheoryderiv}), any triangulated category that arises as the homotopy category of a stable model category admits an f-category. 
\end{remark} 
\begin{remark}
    In \cite{moritatheoryderiv}, Virili is able to construct an unbounded realization functor for a t-structure with some assumptions about some higher structures living above \(\mc T\). Let us briefly set the stage for the following theorem. Suppose that \(\mc T\) arises as the base of a strong, stable derivator \(\mathbb{D}\), that is \(\mc T = \mathbb D(e)\), and let \(\textbf{t}\) be a t-structure on \(\mc T\). Then Virili is able to construct a morphism of derivators \[real_{\textbf{t}}: Ch_{\heart} \to \mathbb D\]
    where \(Ch_{\heart}\) is the derivator defined by \(Ch_{\heart}(I) = Ch((\heart)^I)\) for any diagram category \(I\). Moreover, this morphism of derivators takes quasi-isomorphisms to isomorphisms (\cite[Theorem~6.7]{moritatheoryderiv}), so, assuming the derived category \(D(\heart)\) exists, the morphism factors through the derivator underlying the derived category to give a morphism of derivators \[real_{\textbf{t}}: D_{\heart} \to \mathbb D\]
    Moreover, assuming the t-structure satisfies certain properties we can say more about this morphism. We summarize all this below, rephrased in the language of triangulated categories.
\end{remark} 
\begin{theorem}\cite[\S6.3]{moritatheoryderiv}\label{realizationfunct}
Suppose \(\mc T\) arises as the base of a strong and stable derivator \(\mathbb{D}\) (for example, \(\mc T\) can be the homotopy category of a stable model category), and let \(\textbf{t}\) be a t-structure on \(\mc T\). Denote by \[real^b : D^b(\mc A) \to \mc T\] the bounded realization functor (which exists by \cite{filteredenhance}). Then we can lift the bounded realization functor to an unbounded one \[real: D(\mc A) \to \mc T.\]
Moreover, suppose that \begin{enumerate}
    \item [(a)]\(\textbf{t}\) is non-degenerate,
    \item [(b)] \(real^b\) is fully faithful, and
    \item [(c)] \(\textbf{t}\) is smashing.
\end{enumerate} 
Then the unbounded realization functor remains fully faithful and is coproduct preserving. 
\end{theorem}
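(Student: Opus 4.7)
The plan is to construct $real$ as the underlying base functor of a morphism of derivators and then verify coproduct preservation and full faithfulness by reducing to the bounded case via truncation arguments, leveraging the strong, stable derivator $\mathbb{D}$ throughout.

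First, I would appeal directly to the derivator machinery to produce the unbounded functor: Virili's morphism of derivators $real_{\textbf{t}} : Ch_{\heart} \to \mathbb{D}$ sends level-wise quasi-isomorphisms to isomorphisms and therefore factors through the underlying derivator of $D(\mc A)$. Evaluating at the terminal category $e$ yields the required triangulated functor $real : D(\mc A) \to \mc T$, which agrees with $real^b$ on bounded objects by restriction along $D^b(\mc A) \hookrightarrow D(\mc A)$.

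Second, for coproduct preservation, the smashing hypothesis (c) implies via Remark~\ref{smashing t struct} that $\mc H^0$, and hence all truncation functors $\tau^{\le n}$ and $\tau^{\ge n}$ on $\mc T$, preserve coproducts; in $D(\mc A)$, coproducts are computed degreewise on complexes, so truncations are automatically coproduct-preserving there as well. Since every $X \in D(\mc A)$ is the homotopy colimit of the tower $\tau^{\le n} X$, and since $real$, as the base functor of a morphism of strong stable derivators, preserves such sequential homotopy colimits, one reduces to the bounded case. A standard dévissage along the t-structure then reduces further to the subcategory $\heart \subset \mc T$, where smashing forces coproducts computed in $\heart$ to agree with coproducts in $\mc T$.

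Third, for full faithfulness, let $X, Y \in D(\mc A)$. Because $real$ preserves the homotopy colimit $X \simeq \text{hocolim}_n \tau^{\le n} X$ by the previous step, there are compatible Milnor short exact sequences in $D(\mc A)$ and $\mc T$ computing $\text{Hom}(X, Y)$ and $\text{Hom}(real\,X, real\,Y)$ in terms of $\lim$ and $\lim^1$ of the towers $\text{Hom}(\tau^{\le n} X, Y)$. Dually, non-degeneracy (a) lets me filter $Y$ by $\tau^{\ge -m} Y$, reducing each term to a limit of Hom sets between objects that are bounded on both sides. On such bounded objects, hypothesis (b) supplies a bijection, and passing to the outer limits and $\lim^1$ terms, which match term-by-term between the two Milnor sequences, yields the desired isomorphism.

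The main obstacle will be ensuring that the Milnor sequence in $\mc T$ is genuinely induced by the homotopy-colimit presentation of $real\,X$ and is canonically compatible with the corresponding sequence in $D(\mc A)$, i.e.\ that $real$ preserves the relevant colimits \emph{as presented by the derivator structure}, and not merely up to (uncoherent) isomorphism of objects. Once this coherent compatibility is in hand, which is precisely where the strong, stable derivator hypothesis and the smashing condition interact, the remainder is a formal transfer of the bounded identification (b) through the limits, with non-degeneracy (a) guaranteeing that no information is lost in the filtration.
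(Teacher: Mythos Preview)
The paper does not supply its own proof of this statement: the theorem is quoted directly from Virili \cite[\S6.3]{moritatheoryderiv} and no argument is given beyond the surrounding remark explaining where the morphism of derivators $real_{\textbf{t}}$ comes from. There is therefore nothing in the paper to compare your proposal against.

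That said, your outline is a reasonable sketch of the strategy one expects for such a result: build $real$ as the base component of Virili's morphism of derivators, use the derivator structure to ensure preservation of sequential homotopy colimits, and then bootstrap full faithfulness and coproduct preservation from the bounded case via truncation towers and Milnor $\lim/\lim^1$ sequences, with smashing (c) controlling coproducts and non-degeneracy (a) ensuring the filtrations exhaust. The point you flag as the main obstacle---coherent compatibility of the homotopy-colimit presentations under $real$---is indeed the crux, and is exactly what the strong stable derivator hypothesis is there to guarantee.
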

\begin{remark}
    We will now introduce the t-structure that exists for \(DTM(\olQ, \QQ)\). This was first studied by Levine in \cite{tatemotiveststructure}, and then expanded upon by Wildeshaus in \cite{wildehaus}. They state their results at the level of compact objects, however, as we will see their constructions generalize to the unbounded categories. For ease of notation let us write \(\mc T: = DTM(\olQ, \QQ)\) for what follows.
\end{remark}
\begin{definition}
Define \(\mc T_{[a,b]} := loc(\QQ(n) : a \le -2n \le b )\). We allow \(a, b \in \{-\infty, \infty\}\), in which case we just have \(\mc T = \mc T_{(-\infty, \infty)}\). We write \(\mc T_a := \mc T_{[a,a]}\).
\end{definition}
\begin{remark}
     It follows that \(\mc T_a \simeq D(\QQ) \) (see \cite[\S1]{tatemotiveststructure}).
\end{remark}
We can define a t-structure on \(\mc T_{[a,b]}\) as follows: 
\begin{lemma}
Let \(a \le b \le c\). Then \(\textbf{t} := (\mc T_{[a,b-1]}, \mc T_{[b,c]})\) is a t-structure on \(\mc T_{[a,c]}\).
\end{lemma}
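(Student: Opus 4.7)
The plan is to verify the three axioms of a t-structure: closure of the aisle under suspension, Hom-orthogonality between aisle and coaisle, and the existence of a truncation triangle for every object of $\mc T_{[a,c]}$. The first axiom is immediate, since $\mc T_{[a,b-1]}$ is by construction a localizing subcategory, hence closed under $\Sigma$.

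For orthogonality I would reduce to the Tate-twist generators. Because $\mc T_{[a,b-1]}$ and $\mc T_{[b,c]}$ are localizing subcategories and $hom_{\mc T}(-,-)$ respects coproducts and triangles in each variable when the other is fixed in such a subcategory, it suffices to verify that
\[
hom_{\mc T}(\QQ(m), \QQ(n)[k]) = 0 \quad \text{for all } k \in \ZZ,
\]
whenever $a \le -2m \le b-1$ and $b \le -2n \le c$. Using that $\QQ(m)$ is $\otimes$-invertible, this Hom group is naturally identified with the motivic cohomology group $H^{k,\,n-m}_{\mathrm{mot}}(\olQ, \QQ)$. The weight constraints force $m > n$, so $n - m < 0$, and motivic cohomology of any field with $\QQ$-coefficients vanishes identically in negative weights. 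This is the only motivic input the argument requires.

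For the truncation triangle, given $X \in \mc T_{[a,c]}$, I want to produce a triangle $U \to X \to V \to \Sigma U$ with $U \in \mc T_{[a,b-1]}$ and $V \in \mc T_{[b,c]}$. Because $\mc T_{[a,c]}$ is compactly generated and $\mc T_{[a,b-1]}$ is the localizing subcategory generated by the compact Tate twists $\{\QQ(n) : a \le -2n \le b-1\}$, Neeman's Bousfield localization theorem supplies a functorial triangle $U \to X \to X'' \to \Sigma U$ with $U \in \mc T_{[a,b-1]}$ and $X''$ in the right orthogonal $\mc T_{[a,b-1]}^{\perp}$ taken inside $\mc T_{[a,c]}$. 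The final step is to identify this right orthogonal with $\mc T_{[b,c]}$. The inclusion $\mc T_{[b,c]} \subseteq \mc T_{[a,b-1]}^{\perp}$ is immediate from the orthogonality of the previous paragraph; for the reverse inclusion, I would observe that the Verdier quotient $\mc T_{[a,c]}/\mc T_{[a,b-1]}$ is generated by the images of $\QQ(n)$ for $b \le -2n \le c$, and that the natural functor $\mc T_{[b,c]} \to \mc T_{[a,c]}/\mc T_{[a,b-1]}$ is both fully faithful (again by orthogonality) and essentially surjective, hence an equivalence.

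The main technical substance sits in the second paragraph: it is precisely the vanishing of rational motivic cohomology in negative weights over the algebraically closed field $\olQ$ that makes the decomposition work, while everything else is formal bookkeeping with Bousfield localization and generators. A minor conceptual wrinkle is that both parts are closed under desuspension as well, so what one obtains is really a semi-orthogonal decomposition of $\mc T_{[a,c]}$ (a degenerate t-structure with zero heart); this does not affect the proof, only the interpretation of the axioms.
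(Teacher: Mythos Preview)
Your argument is correct and complete. The paper itself does not give a proof; it simply cites Levine \cite[Lemma~1.2]{tatemotiveststructure}, so what you have written is essentially the content behind that citation, adapted to the big (localizing) setting the paper is working in.

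A couple of minor remarks. First, your reduction to generators for orthogonality is fine, but the phrase ``$hom_{\mc T}(-,-)$ respects coproducts \dots\ in each variable'' is slightly imprecise: the point is that for fixed compact $\QQ(m)$ the vanishing locus of $hom(\QQ(m)[k],-)$ is localizing, and then for fixed $Y$ in that locus the set of $X$ with $hom(X,Y)=0$ is closed under coproducts because $hom(\coprod X_i,Y)=\prod hom(X_i,Y)$. Second, you correctly isolate that the only motivic input is the vanishing of $H^{k,q}_{\mathrm{mot}}(\olQ,\QQ)$ for $q<0$, which is elementary and holds over any base; the deeper Beilinson--Soul\'e vanishing is \emph{not} needed here (it enters only later, for the t-structure with nontrivial heart in Theorem~\ref{big t structure theorem}). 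Your closing observation that this is really a semi-orthogonal decomposition is exactly right and worth keeping.
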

\begin{proof}
This is \cite[Lemma~1.2]{tatemotiveststructure}.
\end{proof}
\begin{remark}\label{gr_acoprodpreserving}
    Let \(a \in \ZZ\). The cohomology functor associated to the t-structure \((\mc T_{[\infty, a-1]}, \mc T_{[a, \infty]})\) is denoted \(gr_a : = H^0_a\). Note that this t-structure is smashing by construction, so \(gr_a\) is coproduct preserving by Remark \ref{smashing t struct}.
\end{remark} 
\begin{definition}\cite[Definition~1.4]{tatemotiveststructure}
Let \(a\) be even. Define \(\mc T^{\ge 0}_a\) to be the full, additive category generated by \(\Sigma^n\QQ(\frac{a}{2})\) for \(n \le 0\). Similarly define \(\mc T^{\le 0}_a\) to be the full, additive category generated by \(\Sigma^n\QQ(\frac{a}{2})\) for \(n \ge 0\). Let \(\mc T^{\ge 0}\) be the full subcategory of \(\mc T\) of objects \(X \in \mc T\) such that \(gr_c(X) \in \mc T^{\ge 0}_c\) for all \(c \in \ZZ\). Similarly, define \(\mc T^{\le 0}\) be the full subcategory of \(\mc T\) of objects \(X \in \mc T\) such that \(gr_c(X) \in \mc T^{\le 0}_c\) for all \(c \in \ZZ\).
\end{definition}
\begin{remark}
Under the equivalence \(D(\QQ) \simeq \mc T_a\) we have that the pair \((\mc T^{\le 0}_a, \mc T^{\ge 0}_a )\) on \(\mc T_a\) corresponds to the standard t-structure on \(D(\QQ)\). 
\end{remark}
\begin{remark}\cite[\S4]{tatemotiveststructure}
To spell it out a little more, we have that:\begin{align*}
      X \in \mc T^{\le 0} \iff gr_a(X) \simeq \coprod_{n \ge 0} \Sigma^n \QQ(\frac{a}{2})^{m_n} \text{ for all } a \in \ZZ\\
      X \in \mc T^{\ge 0} \iff gr_a(X) \simeq \coprod_{n \le 0} \Sigma^n \QQ(\frac{a}{2})^{m_n} \text{ for all } a \in \ZZ
\end{align*}
\end{remark}
\begin{theorem}\label{big t structure theorem}\cite[Theorem~4.2]{tatemotiveststructure}, \cite[Lemma~3.3]{peterspecQ}
The pair \((\mc T^{\le 0}, \mc T^{\ge 0})\) is a t-structure on \(\mc T\). Moreover, \begin{enumerate}
    \item [(a)] The t-structure is non-degenerate, compatible and strongly hereditary.
    \item [(b)] The heart \(\AT\) is generated as a full abelian category closed under extensions and coproducts by \(\QQ(n), n \in \ZZ\).
    \item [(c)] Every object \(X \in \AT\) admits a functorial filtration by sub-objects \[ 0 \subset \dots \subset Z_n(X) \subset Z_{n+1}(X) \subset \dots \subset  X\]
    whose sub-quotients \(Z_{n}(X)/Z_{n-1}(X)\) are coproducts of shifts of \(\QQ(n)\).
    \item [(d)] The t-structure is smashing.
\end{enumerate}
\end{theorem}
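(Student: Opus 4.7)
The plan is to construct the t-structure by exploiting the equivalences $\mc T_a \simeq D(\QQ)$ on the weight slices and transferring the standard t-structure on $D(\QQ)$ through the associated graded functors $gr_a$. First I would verify the axioms of a t-structure. Shift stability of $\mc T^{\le 0}$ and $\mc T^{\ge 0}$ is immediate from the definitions via graded pieces. For the orthogonality condition $hom_{\mc T}(X,Y) = 0$ when $X \in \mc T^{\le 0}$ and $Y \in \mc T^{\ge 1}$, I would reduce inductively along the weight filtration to computations of $hom_{\mc T}(\QQ(m), \Sigma^i\QQ(n))$; these are identified with motivic cohomology groups of $\olQ$ that vanish in the required range thanks to Beilinson--Soul\'e vanishing, which holds over algebraic closures of $\QQ$ with rational coefficients. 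The truncation triangles $X^{\le 0} \to X \to X^{>0} \to$ are then produced by splicing standard truncations on each weight slice using the same vanishing.

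For part (a): non-degeneracy follows because if $X \in \bigcap_k \Sigma^k \mc T^{\le 0}$, then each $gr_a(X) \in \bigcap_k \Sigma^k \mc T^{\le 0}_a$, which vanishes by non-degeneracy of the standard t-structure on $D(\QQ)$; joint conservativity of the family $\{gr_a\}_a$ (using that $\mc T$ is generated by the $\QQ(n)$ and that each $gr_a$ preserves coproducts per Remark~\ref{gr_acoprodpreserving}) then forces $X = 0$. Compatibility reduces to $\unit = \QQ(0) \in \AT$, which is clear, together with $\AT \otimes \AT \subseteq \AT$; the latter follows from a K\"unneth-style identity $gr_c(X \otimes Y) \simeq \bigoplus_{a+b=c} gr_a(X) \otimes gr_b(Y)$ combined with the analogous property of the standard heart of $D(\QQ)$. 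Strong hereditariness is a further consequence of Beilinson--Soul\'e vanishing at cohomological degree $\ge 2$, extended to general $M, N \in \AT$ via the filtration from (c). Part (d) is then immediate: each $gr_a$ preserves coproducts and the standard t-structure on $D(\QQ)$ is smashing, so $\mc T^{\ge 0}$ is closed under coproducts.

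For (b) and (c), any $X \in \AT$ should admit a filtration $Z_n(X)$ built inductively via triangles $Z_{n-1}(X) \to Z_n(X) \to gr_{-2n}(X) \to$; the graded piece $gr_{-2n}(X)$ lies in the heart of the standard t-structure on $\mc T_{-2n} \simeq D(\QQ)$, hence is a $\QQ$-vector space, i.e., a coproduct of copies of $\QQ(n)$. That these extensions splice coherently and that the filtration exhausts $X$ uses vanishing of negative Ext groups between distinct weight pieces, which again reduces to Beilinson--Soul\'e vanishing. Functoriality is inherited from functoriality of each $gr_a$, and generation of $\AT$ by the $\QQ(n)$ under extensions and coproducts then follows by iterating this construction.

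The main obstacle is carrying out this program in the unbounded setting. Levine's construction \cite[Theorem~4.2]{tatemotiveststructure} operates at the compact level with \emph{finite} weight filtrations, whereas the statement here concerns arbitrary (possibly unbounded) objects with potentially infinite weight filtrations. The smashing property (d) and the coproduct-preservation of the truncation functors are essential for the construction to converge and for the Hom-vanishing arguments above to pass to infinite coproducts of Tate twists; making this precise requires the derivator-enhanced extension of Peter \cite[Lemma~3.3]{peterspecQ} to lift Levine's bounded result to all of $DTM(\olQ, \QQ)$.
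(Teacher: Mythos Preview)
Your proposal is correct and follows the same underlying approach as the paper: the paper simply cites Levine \cite[Theorem~4.2]{tatemotiveststructure} and Peter \cite[Lemma~3.3]{peterspecQ} for (a)--(c), and for (d) observes exactly what you do, namely that the aisle and coaisle are defined via conditions on $gr_a(X)$, that each $gr_a$ preserves coproducts (Remark~\ref{gr_acoprodpreserving}), and that the standard t-structure on $\mc T_a \simeq D(\QQ)$ is smashing. Your more detailed sketch is a faithful unpacking of what those citations contain (Beilinson--Soul\'e vanishing driving orthogonality and strong hereditariness, K\"unneth for compatibility, weight filtration for (b)--(c)).

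One small overcomplication: in your final paragraph you suggest that passing from Levine's bounded construction to the unbounded category requires a ``derivator-enhanced extension.'' The paper handles this more directly. The subcategories $\mc T_{[a,b]}$ are taken from the outset as \emph{localizing} (not merely thick) subcategories generated by the relevant Tate twists, so the weight t-structures and the functors $gr_a$ are defined on all of $\mc T$ from the start; Levine's arguments then carry over verbatim once one knows the $gr_a$ preserve coproducts. The paper explicitly remarks that the only change from Peter's lemma is that the t-structure is no longer bounded, and that this is immaterial. The derivator machinery enters only later, in constructing the unbounded realization functor (Theorem~\ref{realizationfunct}), not in establishing the t-structure itself.
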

\begin{proof}
Parts (a)-(c) are \cite[Theorem~4.2]{tatemotiveststructure} and \cite[Lemma~3.3]{peterspecQ}. The only change is in Peter's lemma, where our t-structure will no longer be bounded, but that is unimportant for us. Moreover, part (d) follows from our definition of \(\mc T_a\), the description of the aisle and co-aisle in the preceding remark, and that \(gr_a\) commutes with coproducts (recall Remark \ref{gr_acoprodpreserving}).
\end{proof}
\begin{corollary}
There is an equivalence of triangulated categories \[real: D(\AT) \simeq DTM(\olQ, \QQ).\]
\end{corollary}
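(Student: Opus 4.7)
The plan is to apply the realization functor machinery of Theorem~\ref{realizationfunct} to the t-structure $(\mc T^{\le 0}, \mc T^{\ge 0})$ on $\mc T = DTM(\olQ, \QQ)$ constructed in Theorem~\ref{big t structure theorem}, and then to deduce essential surjectivity from the fact that the generators $\QQ(n)$ lie in the heart $\AT$.

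First I would verify the hypotheses of Theorem~\ref{realizationfunct}. The category $DTM(\olQ, \QQ)$ arises, by its very construction, as the homotopy category of a stable model category of Nisnevich sheaves with transfers, and so admits a strong, stable derivator structure. Non-degeneracy and the smashing property of the t-structure are parts (a) and (d) of Theorem~\ref{big t structure theorem}. The remaining hypothesis is that the bounded realization $real^b : D^b(\AT) \to \mc T$ is fully faithful. By the standard Beilinson-type criterion, this is equivalent to the natural comparison map $Ext^i_{\AT}(X,Y) \to hom_{\mc T}(X, \Sigma^i Y)$ being an isomorphism for all $X, Y \in \AT$ and all $i \ge 0$. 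But this is precisely the content of Lemma~\ref{ext for hereditary cat}, which applies because the t-structure is strongly hereditary by Theorem~\ref{big t structure theorem}(a).

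With the hypotheses verified, Theorem~\ref{realizationfunct} produces a fully faithful, coproduct preserving triangulated functor $real : D(\AT) \to \mc T$. Its essential image $\mc E$ is then a triangulated subcategory of $\mc T$ closed under arbitrary coproducts, and hence a localizing subcategory (since triangulated subcategories closed under coproducts are automatically closed under retracts). By Theorem~\ref{big t structure theorem}(b), each generator $\QQ(n)$ lies in $\AT$ and hence in $\mc E$. Since $DTM(\olQ, \QQ) = Loc(\QQ(n) : n \in \ZZ)$ by definition, it follows that $\mc E = \mc T$, so $real$ is essentially surjective, and therefore an equivalence.

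The substantive content of the statement is really packaged into Theorem~\ref{big t structure theorem} and Theorem~\ref{realizationfunct}, so the proof is mostly a matter of assembling these correctly. The only step requiring any genuine verification is the fully faithfulness of $real^b$, and this reduces immediately to the $Ext$--Hom comparison recorded in Lemma~\ref{ext for hereditary cat}. I therefore do not anticipate a serious obstacle.
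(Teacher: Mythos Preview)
Your proposal is correct and follows essentially the same route as the paper: verify the hypotheses of Theorem~\ref{realizationfunct} using Theorem~\ref{big t structure theorem}, invoke Lemma~\ref{ext for hereditary cat} for the $Ext$--$hom$ comparison giving full faithfulness of $real^b$, and then argue that the essential image is a localizing subcategory containing the Tate twists $\QQ(n)$, hence everything. The paper's proof is virtually identical, differing only in minor phrasing.
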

\begin{proof}
We can apply Theorem~\ref{realizationfunct} to get an exact and coproduct preserving functor \(real: D(\AT) \to DTM(\olQ, \QQ)\). To prove that \(real^b\) is fully faithful, since \(\AT\) generates \(D^b(\AT)\) as a triangulated category, it suffices to prove that \begin{align*}
    Ext^i(M, N) \xrightarrow{real^b} & hom_{\dtmq}(real^b(M), real^b(N)[i])\\
     =&\hspace{.05cm} hom_{\dtmq}(M,N[i])
\end{align*} is a bijection for objects \(M, N \in \AT\). Since the t-structure is strongly-hereditary, this follows from Lemma \ref{ext for hereditary cat}. Hence \(real^b\) is fully faithful, which implies \(real\) is fully faithful by Theorem \ref{realizationfunct}. Thus the essential image is a localizing subcategory of \(DTM(\olQ, \QQ)\). This category contains all the Tate twists by Theorem \ref{big t structure theorem}(b). Since the Tate twists generate \(DTM(\olQ, \QQ)\) we get that the essential image is everything.
\end{proof}
This equivalence immediately gives us the following:
\begin{corollary}
    Every object \(t \in DTM(\olQ, \QQ)\) is isomorphic to \(t \simeq \coprod_i \Sigma^{-i}H^i(t)\)
\end{corollary}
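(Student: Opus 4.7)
The plan is to leverage the preceding equivalence $\text{real}: D(\AT) \isomap \dtmq$ and reduce to the classical fact that in the (unbounded) derived category of a hereditary abelian category with coproducts every complex is quasi-isomorphic to the direct sum of shifts of its cohomology. Since $\text{real}$ is exact and coproduct-preserving, and since the t-structure on $\dtmq$ corresponds under $\text{real}$ to the standard one on $D(\AT)$, it is enough to produce the splitting in $\dtmq$ itself, working with the t-structure from Theorem~\ref{big t structure theorem}. I would actually prefer not to pass through the equivalence and instead argue directly with the truncations, since the hereditary vanishing given by Lemma~\ref{ext for hereditary cat} is already stated on the $\mc T$ side.

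The induction step is the following. For a bounded object $t$ with $H^i(t)=0$ for $i \notin [a,b]$, consider the truncation triangle
\[
\tau^{\leq n-1}t \longra \tau^{\leq n}t \longra \Sigma^{-n}H^n(t) \xrightarrow{\delta_n} \Sigma\tau^{\leq n-1}t .
\]
Assuming inductively that $\tau^{\leq n-1}t \simeq \coprod_{i\leq n-1} \Sigma^{-i}H^i(t)$, the map $\delta_n$ lies in
\[
\prod_{i\leq n-1} \hom_{\mc T}\bigl(H^n(t), \Sigma^{n-i+1}H^i(t)\bigr) \;\simeq\; \prod_{i\leq n-1} \text{Ext}^{n-i+1}_{\AT}(H^n(t),H^i(t))
\]
by Lemma~\ref{ext for hereditary cat}. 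The exponents satisfy $n-i+1\geq 2$, so every one of these Ext groups vanishes because the t-structure is strongly hereditary (Theorem~\ref{big t structure theorem}(a)). Thus $\delta_n=0$, the triangle splits, and the induction closes.

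For an arbitrary (unbounded) $t$, I would assemble compatible splittings of the $\tau^{\leq n}t$ (the vanishing of every $\delta_n$ lets one lift sections one step at a time) to produce a single map
\[
\varphi: \coprod_i \Sigma^{-i}H^i(t) \longra t .
\]
Because the t-structure is smashing (Theorem~\ref{big t structure theorem}(d)), each $H^j$ commutes with coproducts, so $H^j$ applied to the source yields $H^j(t)$, and $\varphi$ induces an isomorphism on all cohomology. The cofiber of $\varphi$ then lies in $\bigcap_k \Sigma^k \mc U \cap \bigcap_k \Sigma^k \mc V$, which is zero by non-degeneracy (Theorem~\ref{big t structure theorem}(a)); hence $\varphi$ is an equivalence.

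The main obstacle is the unbounded case: the inductive splitting of $\tau^{\leq n}t$ is only canonical up to the choice of a splitting of a short exact sequence at each stage, so some care is needed to arrange the Postnikov-style sections compatibly in order to produce the global map $\varphi$. Once that is in place, however, smashingness plus non-degeneracy of the t-structure closes the argument cleanly; alternatively one could cite the standard structure theorem for $D(\AT)$ of a hereditary abelian category with coproducts.
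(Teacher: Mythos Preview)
Your opening plan --- transport the question through the equivalence $real$ and invoke the classical splitting in $D(\AT)$ for hereditary $\AT$ --- is exactly what the paper does, in one sentence, citing a reference for the hereditary case.

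Your preferred direct argument is fine for bounded $t$: the induction starts at the lowest nonvanishing cohomology, and at each step $\delta_n$ lands in a \emph{finite} coproduct of shifted heart objects, so Lemma~\ref{ext for hereditary cat} kills it. The unbounded step, however, has a genuine gap as written. You propose to assemble splittings of the $\tau^{\le n}t$, but for $t$ unbounded below these truncations are themselves unbounded below, so your bounded argument does not apply to them and the induction never gets off the ground. To produce the individual sections $\Sigma^{-i}H^i(t)\to\tau^{\le i}t$ directly (without an inductive hypothesis on $\tau^{\le i-1}t$) you would need $\hom_{\mc T}(\heart,\mc T^{\le -2})=0$; this is strictly stronger than strong hereditariness, which only controls maps between shifts of \emph{heart} objects, not maps into arbitrary objects of $\mc T^{\le -2}$. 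That vanishing does hold here, but the clean way to see it is through the equivalence with $D(\AT)$ --- at which point you have circled back to the paper's route. Your closing fallback, citing the structure theorem for $D(\AT)$ with $\AT$ hereditary, is exactly the paper's proof.
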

\begin{proof}
    This isomorphism holds in \(D(\AT)\) because \(\AT\) is heriditary, see for example \cite{everyobjectcoproductofcohomologyD(A)}. The result thus follows for \(\dtmq\) since \(real\) is a coproduct preserving equivalence, and is just the identity on the heart.
\end{proof}
We now turn to classifying the localizing ideals of \(DTM(\olQ, \QQ)\). We will closely follow Peter's approach of computing the thick tensor ideals of \(\dtmq^c\).  
\begin{remark}
Let us fix some notation for the rest of this section. We have the additive functor \begin{align*}
    \phi :\hspace{.1cm} & DTM(\olQ, \QQ) \to \AT \\
     &X  \longrightarrow  \coprod_{i} H^i(X)
\end{align*}
and the inclusion functor \[\iota: \AT \hookrightarrow DTM(\olQ, \QQ).\]
Note that the functor \(\phi\) is also a tensor-functor, because the t-structure is compatible; see \cite{kunnethform}. Following the terminology of Peter we say \(\mc M \subset \AT\) is a coherent tensor ideal of \(\AT\) if it is closed under extensions, kernels, cokernels, and tensoring by arbitrary elements of \(\AT\). Let us denote by \(Coh^{\coprod}(\AT)\) to be the set of coherent tensor ideals of \(\AT\) closed under coproducts.
\end{remark}
\begin{proposition}\label{bijectionoflocidealsDTMAT}
The maps 
    \[\begin{tikzcd}
	\{\text{Localizing ideals of } DTM(\olQ, \QQ) \} & Coh^{\coprod}(\AT)
	\arrow["\iota^*", shift left=1, from=1-1, to=1-2]
	\arrow["\phi ^*", shift left=1, from=1-2, to=1-1]
\end{tikzcd}\]
defined by \begin{align*}
    \iota^*(\mc L) & = \mc L \cap \AT = \iota^{-1}(\mc L)\\
    \phi^*(\mc M) & = \phi^{-1}(\mc M)
\end{align*} are bijective functions inverse to each other.
\end{proposition}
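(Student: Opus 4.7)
The plan is to verify four items: (a) $\iota^*(\mc L) \in Coh^{\coprod}(\AT)$ for each localizing ideal $\mc L$; (b) $\phi^*(\mc M)$ is a localizing ideal for each $\mc M \in Coh^{\coprod}(\AT)$; (c) $\iota^* \phi^* = \mathrm{id}$; and (d) $\phi^* \iota^* = \mathrm{id}$. The workhorse throughout is the decomposition $t \simeq \coprod_i \Sigma^{-i} H^i(t)$ from the preceding corollary, together with the smashing property of the t-structure (so $H^i$ preserves coproducts and $\AT$ is coproduct-closed inside $\mc T$) and its compatibility (so the tensor on $\AT$ is inherited from $\mc T$).

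For (a), the crucial preliminary observation is that any localizing ideal $\mc L$ is closed under the functors $H^i$: the decomposition exhibits each $\Sigma^{-i} H^i(t)$ as a retract of $t$, and localizing ideals are closed under retracts. Given this, if $M, N \in \mc L \cap \AT$, then closure under extensions is immediate from triangulation; closure under kernels and cokernels of a map $f\colon M \to N$ follows because the t-structure axioms concentrate $\mathrm{cone}(f)$ in cohomological degrees $-1$ and $0$ with cohomologies $\ker(f)$ and $\mathrm{coker}(f)$; closure under tensoring with $\AT$ uses that $\mc L$ is a tensor ideal together with compatibility; and closure under coproducts uses smashing. For (b), note first that any coherent tensor ideal is automatically closed under retracts (a summand is the kernel of $\mathrm{id}-e$), so $X \in \phi^*(\mc M)$ forces $H^i(X) \in \mc M$ for every $i$. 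For a distinguished triangle $X \to Y \to Z \to \Sigma X$, splitting the long exact sequence in cohomology into short exact sequences presents each $H^i(Y)$ as an extension of a subobject of $H^i(Z)$ by a quotient of $H^i(X)$, so coherence of $\mc M$ yields $H^i(Y) \in \mc M$; smashing handles coproducts. The tensor-ideal property reduces to the identity $H^n(M \otimes Z) \simeq M \otimes H^n(Z)$ for $M \in \heart$ and arbitrary $Z \in \mc T$, proved by expanding $Z \simeq \coprod_j \Sigma^{-j} H^j(Z)$ and invoking compatibility; applied to $X \simeq \coprod_i \Sigma^{-i} H^i(X)$, this yields a K\"unneth-type formula $H^n(X \otimes Z) \simeq \coprod_i H^i(X) \otimes H^{n-i}(Z)$, each summand of which lies in $\mc M$ by the tensor-ideal property of $\mc M$ inside $\AT$.

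The mutual inverse identities are then straightforward. Item (c) is immediate because $\phi(M) = H^0(M) = M$ whenever $M \in \AT$. For (d), the chain of equivalences $X \in \mc L \iff \coprod_i \Sigma^{-i} H^i(X) \in \mc L \iff H^i(X) \in \mc L$ for every $i $ $ \iff \coprod_i H^i(X) \in \mc L \cap \AT$ follows from the decomposition together with retract- and coproduct-closure of a localizing ideal. The step I expect to require the most care is the tensor-ideal verification in (b), since this is the only point where the cohomological decomposition, compatibility of the t-structure, and coherence of $\mc M$ must all be combined simultaneously to express $H^n(X \otimes Z)$ in terms of objects already controlled by $\mc M$.
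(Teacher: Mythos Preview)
Your proposal is correct and follows essentially the same strategy as the paper, which delegates most details to Peter's Theorems~4.4 and~4.11 (the bounded analogues) and then observes that the unbounded decomposition $t \simeq \coprod_i \Sigma^{-i} H^i(t)$ together with the smashing property carry those arguments over verbatim. The only organizational difference is that the paper does not verify your item~(a) directly: instead it checks that $\phi^*$ is well-defined and that $\iota^*\phi^* = \mathrm{id}$, then shows $\phi^*$ is surjective via the decomposition, so that $\iota^*$ is automatically well-defined and a two-sided inverse---your more symmetric presentation is perhaps cleaner, but the content is the same.
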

\begin{proof}
This is the unbounded analogue of \cite[Theorems~4.4,~4.11]{peterspecQ}. We include a sketch of the argument here for the readers convenience. The first step is to prove \(\phi^*\) is well defined, so let \(\mc M \subset \AT\) be a localizing ideal of \(\AT\). The same proof as in \cite[Theorem~4.4]{peterspecQ} shows that \(\phi^*(\mc M)\) is closed under extensions and tensors. It is closed under coproducts because the cohomology functors preserve coproducts since the t-structure is smashing, and because \(\mc M\) is closed under coproducts by assumption. Now we observe that since \(\phi \circ \iota \simeq id_{\AT}\) we have \(\iota^*(\phi^*(\mc M)) = (\phi \circ \iota )^{-1}(M) = M\), so we get that \(\iota^*\) is well defined on the image of \(\phi^*\), and is left inverse to it. Thus, it remains to show \(\phi^*\) is surjective. To do so, we can apply \cite[Theorem~4.11]{peterspecQ} after noting that we still have an isomorphism  \[t \simeq \coprod_{i \in \ZZ} \Sigma^{-i}H^{i}(t)\]
for any \(t \in DTM(\olQ, \QQ)\).
\end{proof}
\begin{remark}
    We can now complete our last reduction step: we continue to denote \(\mc T = \dtmq\). For each \(a \in \ZZ\) recall we have the functor \(gr_a: \mc T \to \mc T_a \simeq D(\QQ) \simeq Gr_{\ZZ}(Vect_{\QQ})\). Moreover, Biglari showed (\cite[Proposition~3.8]{onfinitedimoftatemotives}) the following: 
    \begin{proposition}
        The functor \begin{align*}
            gr : = \coprod_{a \in \ZZ}gr_a [-a] : \mc T \to D(\QQ)\\
            X \mapsto \coprod_{a \in \ZZ} gr_a(X)[-a]
        \end{align*} 
        is a tensor-triangulated functor.
    \end{proposition}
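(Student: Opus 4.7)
The plan is to reduce the claim to a computation on the generating Tate twists, leveraging the equivalence $\mathrm{real}: D(\AT) \simeq \dtmq$ established in the previous corollary together with the hereditary structure on $\AT$ and the canonical filtration $Z_\bullet$ of Theorem~\ref{big t structure theorem}(c).

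First I would verify coproduct-preservation. Each $gr_a = H^0_a$ is coproduct-preserving by Remark~\ref{gr_acoprodpreserving}, since the associated t-structure $(\mc T_{(-\infty, a-1]}, \mc T_{[a, \infty)})$ is smashing by construction. Consequently $gr = \coprod_a gr_a[-a]$ preserves coproducts, being a coproduct of coproduct-preserving functors.

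Second, to show $gr$ is triangulated, the basic difficulty is that each $gr_a$ is a priori only a cohomology functor, sending distinguished triangles to long exact sequences rather than triangles. Given a triangle $X \to Y \to Z \to \Sigma X$, for each $a$ one obtains a long exact sequence $\cdots \to H^i_a(X) \to H^i_a(Y) \to H^i_a(Z) \to H^{i+1}_a(X) \to \cdots$ in $\mc T_a \simeq D(\QQ)$. Using the splitting $t \simeq \coprod_i \Sigma^{-i} H^i(t)$ afforded by the hereditary property, together with the canonical filtration $Z_\bullet$ whose graded pieces are coproducts of shifts of $\QQ(n)$, these long exact sequences re-assemble --- after the shifts $[-a]$ and the coproduct over $a$ --- into a single distinguished triangle in $D(\QQ)$. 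A clean route is to verify this first for compact $t \in \dtmq^c$, where the weight filtration is bounded and connecting maps split essentially formally, and then extend to the unbounded setting using coproduct-preservation.

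Third, for tensor-compatibility, on the generators one computes $gr(\QQ(n)) = \Sigma^{2n} gr_{-2n}(\QQ(n)) = \Sigma^{2n} \QQ$ using the identification $\mc T_{-2n} \simeq D(\QQ)$ sending $\QQ(n) \mapsto \QQ$. Then $gr(\QQ(m)) \otimes gr(\QQ(n)) = \Sigma^{2m}\QQ \otimes \Sigma^{2n}\QQ = \Sigma^{2(m+n)}\QQ = gr(\QQ(m+n)) = gr(\QQ(m) \otimes \QQ(n))$, so the canonical comparison $gr(X) \otimes gr(Y) \to gr(X \otimes Y)$ is an isomorphism on the generating set $\{\QQ(m), \QQ(n)\}_{m,n \in \ZZ}$. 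Both sides are triangulated and coproduct-preserving in each variable once the triangulated claim is in hand, and since the Tate twists generate $\dtmq$ as a localizing subcategory, a standard two-variable localizing-subcategory argument promotes this to an isomorphism on all of $\dtmq \times \dtmq$.

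The main obstacle, I expect, is the triangulated claim in the second step: individual cohomology functors $gr_a$ give only long exact sequences, so re-assembling these --- after shifting by $[-a]$ and summing over $a$ --- into a genuine distinguished triangle in $D(\QQ)$ is the technical heart of the argument. The hereditary splitting together with the multiplicativity of the weight filtration $Z_\bullet$ are what ultimately make this possible.
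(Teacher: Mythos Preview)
The paper does not give its own proof of this proposition; it simply cites Biglari \cite[Proposition~3.8]{onfinitedimoftatemotives}. So there is no ``paper's proof'' to compare against, and your proposal should be judged on its own merits.

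Your triangulated step is overcomplicated, and rests on a misreading of what $gr_a$ is. The subcategories $\mc T_{[a,b]} = loc(\QQ(n): a \le -2n \le b)$ are \emph{localizing}, hence closed under all suspensions. The pairs $(\mc T_{(-\infty,a-1]}, \mc T_{[a,\infty)})$ are therefore not t-structures in the usual sense but semiorthogonal decompositions: both aisle and coaisle are themselves triangulated. The associated ``truncation'' projections are then exact functors, and $gr_a$ --- a composite of two such projections --- is already a triangulated functor $\mc T \to \mc T_a \simeq D(\QQ)$. No long exact sequences need to be reassembled, and invoking the hereditary splitting here is unnecessary (and a bit circular, since the hereditary structure is for the \emph{other} t-structure $(\mc T^{\le 0}, \mc T^{\ge 0})$).

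Your tensor step has a genuine gap. You write ``the canonical comparison $gr(X) \otimes gr(Y) \to gr(X \otimes Y)$'' and then check it is an isomorphism on generators, but you have not produced this comparison map. A lax monoidal structure on $gr$ is not automatic: it is precisely the multiplicativity of the weight filtration (equivalently, a K\"unneth-type statement $gr_a(X \otimes Y) \simeq \coprod_{b+c=a} gr_b(X) \otimes gr_c(Y)$), and this is the actual content of Biglari's argument. Once that natural transformation is in hand, your two-variable localizing-subcategory extension does finish the job, but as written the existence of the transformation is assumed rather than proved.
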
   
    If we restrict the domain of this functor to just the heart, we obtain the following \cite[Corollary~4.3]{tatemotiveststructure}: 
    \begin{proposition}
        The functor \(gr |_{\AT}: \AT \to D(\QQ)\) gives an equivalence of \(\AT\) with a tensor subcategory of \(Gr_{\ZZ}(Vect_{\QQ})\).
    \end{proposition}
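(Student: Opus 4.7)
The plan is to verify three things in turn: (i) that $gr|_\AT$ lands in the full subcategory $Gr_\ZZ(Vect_\QQ) \subseteq D(\QQ)$; (ii) that it is faithful; and (iii) that its essential image is closed under the tensor product. The resulting equivalence with a ``tensor subcategory'' of $Gr_\ZZ(Vect_\QQ)$ is then the equivalence between $\AT$ and the (not necessarily full) subcategory whose objects are the $gr(X)$ and whose morphisms are those induced from $\AT$.

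For (i), observe that if $X \in \AT$ then each $gr_a(X)$, being the $0$th cohomology of the induced t-structure on $\mc T_a \simeq D(\QQ)$, is concentrated in cohomological degree $0$; the shift $[-a]$ then places $gr_a(X)[-a]$ in degree $a$, so $gr(X) = \coprod_a gr_a(X)[-a]$ is quasi-isomorphic to its cohomology and is therefore an object of $Gr_\ZZ(Vect_\QQ)$. For (iii), the preceding Biglari proposition makes $gr$ tensor-triangulated on all of $\mc T$, so its restriction to $\AT$ remains a tensor functor whose essential image, already living in $Gr_\ZZ(Vect_\QQ)$, is automatically closed under the tensor product in $D(\QQ)$. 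The grading matches on generators: $\QQ(n) \in \mc T_{-2n}$ so $gr(\QQ(n))$ is a one-dimensional $\QQ$-vector space in degree $-2n$, and the Tate formula $\QQ(n) \otimes \QQ(m) \simeq \QQ(n+m)$ becomes additivity of degrees.

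The main content is (ii), faithfulness. Given $f : X \to Y$ in $\AT$ with $gr(f) = 0$, I would first recall that each cohomology functor $gr_a$ of a t-structure is exact on the heart, and a coproduct of exact functors is exact, so $gr|_\AT : \AT \to Gr_\ZZ(Vect_\QQ)$ is an exact functor of abelian categories. Hence $gr(\operatorname{im}(f)) = \operatorname{im}(gr(f)) = 0$, reducing the problem to showing $gr|_\AT$ is conservative on objects. For this I would invoke the functorial filtration of Theorem~\ref{big t structure theorem}(c): if $gr(X) = 0$ then every sub-quotient $Z_n(X)/Z_{n-1}(X)$, being a coproduct of copies of $\QQ(n)$ detected exactly by $gr_{-2n}$, must itself vanish, and non-degeneracy (Theorem~\ref{big t structure theorem}(a)) then forces $X = 0$.

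The main obstacle is this conservativity step, because the filtration is a priori indexed by all of $\ZZ$ and has no finite length in either direction. The way I would close this is to use that the t-structure is smashing (Theorem~\ref{big t structure theorem}(d)), so that each $gr_a$ commutes with coproducts (Remark~\ref{gr_acoprodpreserving}); combined with non-degeneracy, this licenses the passage from the vanishing of all sub-quotients to $X = \varinjlim Z_n(X) = 0$ and closes the argument.
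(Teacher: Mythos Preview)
The paper does not give its own proof here; the proposition is simply quoted from Levine \cite[Corollary~4.3]{tatemotiveststructure}. Your direct argument has the right outline --- reduce to showing that $gr|_{\AT}$ is a faithful exact tensor functor --- but two of the steps are not properly justified.

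For exactness, you write that ``each cohomology functor $gr_a$ of a t-structure is exact on the heart'', but this conflates two different t-structures: $gr_a = H^0_a$ is the cohomology of the \emph{weight} t-structure $(\mc T_{(-\infty,a-1]},\mc T_{[a,\infty)})$, whose heart is $\mc T_a$, not $\AT$, so the general fact you invoke says nothing about exactness on $\AT$. The correct argument is that the weight t-structure has triangulated aisle and coaisle (they are localizing subcategories), so each $gr_a \colon \mc T \to \mc T_a$ is a \emph{triangulated} functor; and by the very definition of $\mc T^{\le 0}$ and $\mc T^{\ge 0}$ it carries $\AT$ into the heart $Vect_\QQ \subset \mc T_a \simeq D(\QQ)$. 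A triangulated functor sending one heart into another is automatically exact there.

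For conservativity, you correctly isolate the infinite weight filtration as the obstacle and gesture at ``smashing plus non-degeneracy'', but you never actually argue that the tower $(Z_n(X))_n$ is exhaustive and separated in the sense required to pass from ``all subquotients vanish'' to $X = 0$. The smashing property of the \emph{motivic} t-structure concerns coproducts in the coaisle, not convergence along the weight filtration, and non-degeneracy of $(\mc T^{\le 0},\mc T^{\ge 0})$ says nothing directly about limits in the weight direction. In Levine's original bounded setting the filtration is finite and the issue evaporates; in the unbounded setting it does not, and you need a separate input (for instance, that the $gr_a$ are jointly conservative on all of $\mc T$, which one can verify from the fact that the $\QQ(n)$ compactly generate).
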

\end{remark} 
With these two propositions in hand, we can run the exact argument as in \cite[Theorem~4.15]{peterspecQ} to conclude:
\begin{corollary}\label{locidealsofAT}
The only coherent tensor ideals closed under coproducts of \(\AT\) are the \((0)\) ideal and \(\AT\).
\end{corollary}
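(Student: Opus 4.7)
The plan is to adapt Peter's argument in \cite[Theorem~4.15]{peterspecQ} from the compact/bounded-weight setting to the unbounded one, using the two propositions immediately preceding the corollary. The key reduction is to show that any nonzero coherent tensor ideal \(\mc M \subseteq \AT\) closed under coproducts must contain the monoidal unit \(\QQ(0) = \unit\). Once \(\QQ(0) \in \mc M\), closure under tensoring with arbitrary elements of \(\AT\) forces \(M \cong M \otimes \QQ(0) \in \mc M\) for every \(M \in \AT\), giving \(\mc M = \AT\).

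To produce \(\QQ(0) \in \mc M\), I would pick any nonzero \(X \in \mc M\) and apply the functorial filtration of Theorem~\ref{big t structure theorem}(c), whose subquotients have the form \(Z_n(X)/Z_{n-1}(X) \cong \QQ(n)^{(I_n)}\) for index sets \(I_n\). Because \(gr|_{\AT}\) is an equivalence onto a tensor subcategory of \(Gr_{\ZZ}(Vect_{\QQ})\), and in particular faithful, \(X \neq 0\) forces some \(I_n\) to be nonempty (else \(gr(X)\) would vanish grade-by-grade). Interpreting closure of \(\mc M\) under kernels and cokernels in the Serre sense, the quotient \(X/Z_{n-1}(X) \in \mc M\), and then the sub-object \(Z_n(X)/Z_{n-1}(X) \cong \QQ(n)^{(I_n)}\) also lies in \(\mc M\). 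Splitting off a single summand yields \(\QQ(n) \in \mc M\), and then tensoring with \(\QQ(-n) \in \AT\) gives \(\QQ(0) \in \mc M\) as required.

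The main technical difference from Peter's original argument is that here both the filtration and each index set \(I_n\) may be genuinely infinite, whereas Peter worked with compact objects of bounded weight. This is exactly what the hypothesis on \(\mc M\) of being closed under coproducts buys us: together with the fact that \(gr\) preserves coproducts (Remark~\ref{gr_acoprodpreserving}) and the equivalence of \(\AT\) with a tensor subcategory of the semisimple category \(Gr_{\ZZ}(Vect_{\QQ})\), the extraction of a single \(\QQ(n)\)-summand from \(\QQ(n)^{(I_n)}\) reduces to a formal manipulation of graded \(\QQ\)-vector spaces. I expect the only subtle point to be checking that the filtration behaves well for non-compact \(X\), but this is already contained in the functoriality clause of Theorem~\ref{big t structure theorem}(c) as stated in the unbounded category.
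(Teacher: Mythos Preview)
Your proposal is correct and follows essentially the same route as the paper: both simply adapt Peter's argument from \cite[Theorem~4.15]{peterspecQ} to the unbounded setting. The paper's own proof is nothing more than a pointer to that argument together with the two \(gr\) propositions, so your write-up is in fact more detailed than what the paper provides.

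One small remark on emphasis: the paper singles out the \(gr\) propositions (that \(gr\) is a tt-functor and that \(gr|_{\AT}\) identifies \(\AT\) with a tensor subcategory of \(Gr_{\ZZ}(Vect_{\QQ})\)) as the tools to feed into Peter's argument, whereas you route the extraction of a \(\QQ(n)\) through the weight filtration of Theorem~\ref{big t structure theorem}(c). These are two faces of the same construction, since the successive subquotients \(Z_n(X)/Z_{n-1}(X)\) are precisely the graded pieces picked out by \(gr\). Your caveat about reading ``closed under kernels and cokernels'' in the Serre sense is worth flagging: this is exactly the convention Peter uses, and it is what makes the passage from \(X\) to its weight-graded subquotient legitimate. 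If one insisted on the weaker wide-subcategory reading, one would instead use the \(gr\) equivalence directly to split off a \(\QQ(n)\) as a retract; either way the conclusion is the same.
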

\begin{theorem}
The category \(DTM(\olQ, \QQ)\) is stratified.
\end{theorem}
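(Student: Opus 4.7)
The plan is to reduce the stratification of $\dtmq$ to the classification of localizing tensor ideals already established in Proposition~\ref{bijectionoflocidealsDTMAT} and Corollary~\ref{locidealsofAT}. Since, by Peter's theorem, the Balmer spectrum $Spc(\dtmq^c) \simeq (0)$ is a single point, it is automatically noetherian, and so by Remark~\ref{localglobalnoeth} the local-to-global principle holds for $\dtmq$. By Theorem~\ref{strattheorem}, it therefore suffices to verify minimality at the unique prime $(0)$.

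Next I would unpack what minimality means here. The unique prime $\mc P = (0)$ corresponds to the Thomason subset $\varnothing$ and its complement $Spc(\dtmq^c)$, and consequently the associated tensor idempotent is $g((0)) = \unit_{\dtmq}$. Thus the localizing ideal generated by $g((0))$ is $\lt(\unit_{\dtmq}) = \dtmq$ itself. Minimality at $(0)$ therefore amounts to the statement that $\dtmq$ admits no nonzero proper localizing tensor ideals.

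To conclude, I would invoke the bijection of Proposition~\ref{bijectionoflocidealsDTMAT} between localizing ideals of $\dtmq$ and $Coh^{\coprod}(\AT)$, combined with Corollary~\ref{locidealsofAT}, which asserts that the only coproduct-closed coherent tensor ideals of $\AT$ are $(0)$ and $\AT$. Under the bijection these correspond respectively to the zero ideal and the whole category $\dtmq$, so the only localizing ideals of $\dtmq$ are $0$ and $\dtmq$. Hence the unique nonzero localizing ideal $\lt(g((0))) = \dtmq$ is minimal, establishing minimality at $(0)$ and therefore stratification by Theorem~\ref{strattheorem}. There is no substantive obstacle remaining: the real work has been done in establishing the t-structure, the realization equivalence $real: D(\AT) \simeq \dtmq$, and the classification of coherent tensor ideals in $\AT$; the present theorem is a direct packaging of those results.
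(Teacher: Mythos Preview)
Your proposal is correct and follows essentially the same approach as the paper: observe that the one-point spectrum gives \(g((0)) = \unit\) and trivially the local-to-global principle, then combine Proposition~\ref{bijectionoflocidealsDTMAT} with Corollary~\ref{locidealsofAT} to conclude there are only two localizing ideals, forcing minimality. The only difference is that you spell out a few details (noetherianity, the identification of \(g((0))\)) that the paper leaves implicit.
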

\begin{proof}
Since \(Spc(\dtmq^c) = \{*\}\) is just a point, we have that \(g(*) = \unit \ne 0\), and the local-to-global principle holds. Moreover, by combining Proposition~\ref{bijectionoflocidealsDTMAT} and Corollary~\ref{locidealsofAT}, we get that there are only two localizing ideals of \(\dtmq\). Hence the localizing ideal \(g(*)\) generates has to be minimal. 
\end{proof}
Again keeping in mind Remark~\ref{height0reduction}, this gives us:
\begin{corollary}\label{minatmo}
    Minimality holds at the height zero prime \(m_0\).
\end{corollary}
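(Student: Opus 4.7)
The plan is essentially to invoke the two preceding results in sequence. By Remark~\ref{height0reduction}, the finite localization corresponding to inverting all primes yields an equivalence
\[
DTM(\olQ, \ZZ)/\lt(m_0) \simeq DTM(\olQ, \QQ),
\]
so the unique closed point of the local category at $m_0$ corresponds to the unique prime $\ast \in Spc(\dtmq^c)$. Theorem~\ref{localresult} then tells us that minimality at $m_0$ in $DTM(\olQ, \ZZ)$ is equivalent to minimality at $\ast$ in $DTM(\olQ, \QQ)$.

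The second step is to invoke the immediately preceding theorem, which states that $DTM(\olQ, \QQ)$ is stratified. Stratification (condition (a) of Theorem~\ref{strattheorem}) in particular asserts that $\lt(g(\mc P))$ is a minimal localizing tensor ideal for every prime $\mc P$; applied to the unique prime $\ast$, this is precisely minimality at $\ast$.

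Combining these two observations finishes the proof. There is essentially no obstacle here since the genuine work has already been carried out: the reduction to the rational setting was built into Remark~\ref{height0reduction} via Theorem~\ref{localresult}, and the stratification of $\dtmq$ was just established using the realization equivalence $D(\AT) \simeq \dtmq$ together with the classification of coherent tensor ideals of $\AT$ in Corollary~\ref{locidealsofAT}. So the corollary is purely a matter of assembling the pieces.
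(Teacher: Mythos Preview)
Your proposal is correct and follows essentially the same approach as the paper: the paper deduces the corollary directly from the preceding theorem (stratification of $\dtmq$) together with Remark~\ref{height0reduction}, which is exactly the two-step reduction you spell out.
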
 
\section{Minimality at the Height One Prime}\label{e_psection}
In this section we establish minimality for the height one prime \(e_p\). 
\begin{remark}\label{reduction steps for e_p}
From Proposition \ref{localresult} establishing minimality at the prime \(e_p\) is equivalent to establishing minimality at the closed point in \[\dtmp / \lt(e_p).\]
     Combining Theorems \ref{locgivesetal}, \ref{specdtmfinite} and \ref{maintheoremmartin}, we get that \[\dtmp / \lt(e_p) \simeq \dtmep\]
and so we are reduced to establishing minimality at the closed point in the category \(\dtmep\). To do so, we will consider the geometric functor \begin{align} \label{geofunctor}
    \gamma^*: \dtmep \to \DTMe(\olQ, \ZZ/p)
\end{align} 
induced by the ring map \(\ZZ_{(p)} \to \ZZ/p\). Note that \(\ZZ_{(p)}\) is a regular ring and so we have \(\ZZ/p \in D(\ZZ_{(p)})^c\). Hence \(\gamma_*(\unit_{\DTMe(\olQ, \ZZ/p)}) \in \dtmep^c \) by Remark~\ref{if R' in D(R)^c then strongly closed}.
\end{remark}
This category \(\DTMe(\olQ, \ZZ/p)\) is quite simple in fact.
\begin{theorem}[Rigidity Theorem]\label{rigiditytheorem}
There is a tt-equivalence \[\DMe(\olQ, \ZZ/p) \simeq D(\ZZ/p)\]
\end{theorem}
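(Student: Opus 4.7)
The plan is to invoke the classical rigidity theorem, originally due to Suslin--Voevodsky and reformulated for étale motivic complexes in the work of Ayoub and of Cisinski--Déglise (see \cite{etalemotives}). The argument proceeds in two steps.

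First, for any field $k$ in which $p$ is invertible, rigidity provides a canonical tt-equivalence
\[
\DMe(k, \ZZ/p) \simeq D(\text{Spec}(k)_{\text{ét}}, \ZZ/p),
\]
where the right-hand side denotes the derived category of sheaves of $\ZZ/p$-modules on the small étale site of $\text{Spec}(k)$. The intuition is that for torsion coefficients coprime to $\text{char}(k)$, the transfer structure on étale sheaves is automatic (this is the origin of Suslin's rigidity), while the $\mathbf{A}^1$-localization built into the construction of $\DMe$ is harmless because étale cohomology with such coefficients is already $\mathbf{A}^1$-invariant.

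Second, since $\olQ$ is algebraically closed (hence separably closed), the small étale site of $\text{Spec}(\olQ)$ is equivalent to the punctual topos, and the geometric-point fiber functor induces an equivalence between sheaves of $\ZZ/p$-modules on this site and ordinary $\ZZ/p$-modules. Passing to derived categories yields $D(\text{Spec}(\olQ)_{\text{ét}}, \ZZ/p) \simeq D(\ZZ/p)$, and composing with the equivalence from the first step gives the theorem. Essentially all of the content is packaged in rigidity, which we would invoke as a black box; the reduction from étale sheaves on $\text{Spec}(\olQ)$ to $\ZZ/p$-modules is then formal. The main obstacle is therefore not technical but bibliographic, namely locating the precise form of rigidity stated at the level of the tt-category $\DMe(k,\ZZ/p)$, rather than re-deriving it.
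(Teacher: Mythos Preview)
Your proposal is correct and takes essentially the same approach as the paper: the paper's proof is a one-line citation to \cite[Theorem~4.5.2]{etalemotives}, which is precisely the Cisinski--D\'eglise rigidity statement you describe. Your two-step decomposition (rigidity to the small \'etale site, then triviality of that site over a separably closed field) is exactly the content of that reference, so you are unpacking what the paper invokes wholesale.
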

\begin{proof}
    This is \cite[Theorem~4.5.2]{etalemotives}.
\end{proof}
\begin{corollary}\label{subfields are fields}
   The category \(\dtmep\) is stratified.
\end{corollary}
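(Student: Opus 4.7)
The plan is to combine the Rigidity Theorem with the local criterion of Proposition \ref{compacttheorem}. First I would observe that $\text{Spc}(\dtmep^c)$ is the noetherian two-point space $\{m_0, e_p\}$ with $e_p$ closed and $m_0$ generic --- this follows from Theorem \ref{maintheoremmartin} and Theorem \ref{locgivesetal}, as $\dtmep$ is the local category of $\dtmp$ at $e_p$. By Remark \ref{localglobalnoeth} the local-to-global principle holds automatically, so Theorem \ref{strattheorem} reduces stratification to verifying minimality at each of the two primes.

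For the generic prime $m_0$, I would invoke Theorem \ref{localresult}: the local category $\dtmep / \lt(m_0)$ is identified with $\dtmq$, since inverting $p$ on $\ZZ_{(p)}$-coefficients yields $\QQ$-coefficients and étalification is an equivalence over $\QQ$ by Remark \ref{etalification functor}. Minimality at the sole closed point of $\dtmq$ is then Corollary \ref{minatmo} (which in fact shows $\dtmq$ is stratified).

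The main step is minimality at the closed point $e_p$, for which I would apply Proposition \ref{compacttheorem} to the geometric functor
\[
\gamma^*: \dtmep \to \DTMe(\olQ, \ZZ/p)
\]
of Remark \ref{reduction steps for e_p}. The target is equivalent to $D(\ZZ/p)$ via the Rigidity Theorem (upgraded from $\DMe$ to $\DTMe$ because over $\olQ$ the étale Tate twist $(\ZZ/p)^{\text{ét}}(1)$ is trivialized by a choice of primitive $p$-th root of unity), whose Balmer spectrum is the singleton $\{(0)\}$. The three hypotheses of Proposition \ref{compacttheorem} are checked as follows: (1) $\dtmep$ is local with unique closed point $e_p$ and satisfies the detection property by Remark \ref{detectionprop} and local-to-global; (2) $\gamma_*(\unit)$ is compact by Remark \ref{reduction steps for e_p} and Remark \ref{if R' in D(R)^c then strongly closed}, since $\ZZ/p \in D(\ZZ_{(p)})^c$; (3) the unique prime $(0) \in \text{Spc}(D(\ZZ/p)^c)$ maps to $e_p$, directly from the characterization of $e_p$ as the kernel of $\gamma^*$ in Theorem \ref{maintheoremmartin}. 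Since $D(\ZZ/p)$ has only the trivial and total localizing ideals, minimality at $(0)$ is automatic, so Proposition \ref{compacttheorem} yields minimality at $e_p$.

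The argument has little serious obstruction: the Rigidity Theorem does the essential work, and the rest is careful bookkeeping to ensure that the various quotient categories ($\dtmz \to \dtmp \to \dtmep \to \DTMe(\olQ, \ZZ/p)$) and their spectra align. The only minor subtlety I would want to spell out explicitly is the upgrade of the Rigidity Theorem from $\DMe(\olQ, \ZZ/p)$ to $\DTMe(\olQ, \ZZ/p)$, justifying that restricting to the Tate-generated subcategory loses no information in this algebraically closed, finite-coefficient étale setting.
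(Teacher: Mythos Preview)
Your proof is correct, but it proves more than the paper's own proof does --- and this reveals a typo in the paper. The statement says \(\dtmep\) (i.e.\ \(\ZZ_{(p)}\)-coefficients), but the paper's proof, the label ``subfields are fields'', and the way the result is invoked in Corollary~\ref{minatep} all make clear that the intended statement is about \(\dtmemodp\) (i.e.\ \(\ZZ/p\)-coefficients). The paper's proof only establishes the latter: Rigidity gives \(\DMe(\olQ,\ZZ/p)\simeq D(\ZZ/p)\), which is a tt-field; then \cite[Proposition~5.22]{ttfieldsrumination} forces the tt-subcategory \(\DTMe(\olQ,\ZZ/p)\) to be a tt-field as well, hence stratified. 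That is the entire argument.

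You, by contrast, took the statement at face value and proved the stronger claim that \(\dtmep\) is stratified. Your argument is sound and in effect reassembles, inside one proof, what the paper distributes over Corollary~\ref{subfields are fields} (as intended), Corollary~\ref{minatep}, and Corollary~\ref{minatmo}. The one genuine methodological difference is in how you pass from \(\DMe\) to \(\DTMe\) with \(\ZZ/p\)-coefficients: you trivialize the étale Tate twist explicitly via a primitive \(p\)-th root of unity in \(\olQ\), obtaining \(\DTMe(\olQ,\ZZ/p)\simeq D(\ZZ/p)\) directly; the paper instead keeps \(\DTMe\) as a tt-subcategory of the tt-field \(\DMe(\olQ,\ZZ/p)\) and appeals to the general ``subfields of tt-fields are tt-fields'' result. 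Both routes work; the paper's avoids the explicit Kummer-theory identification, while yours makes the equivalence concrete.
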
 
\begin{proof}
     As the derived category of a field is, of course, stratified, we obtain the same for \(\DMe(\olQ, \ZZ/p)\). We can then apply \cite[Proposition~5.22]{ttfieldsrumination} to conclude that \(\DTMe(\olQ, \ZZ/p)\) is itself a tt-field, and thus is also stratified (see \cite[Theorem~18.4]{cosupport}). 
\end{proof}
\begin{remark}
    On spectra the geometric functor (\ref{geofunctor}) gives
\[\begin{tikzcd}
	{Spc(\DTMe(\olQ, \ZZ/p)^c)} & {Spc(\dtmep^c)} \\
	{(0)} & {e_p} \\
	& {m_0}
	\arrow[no head, from=2-2, to=3-2]
	\arrow["{Spc(\gamma_*)}", hook, from=1-1, to=1-2]
	\arrow["{Spc(\gamma_*)}", shift right=5, hook, from=2-1, to=2-2]
\end{tikzcd}\]
We can therefore apply Theorem \ref{compacttheorem} and we immediately get:
\end{remark}
\begin{corollary}\label{minatep}
Minimality holds at the height 1 primes \(e_p\).
\end{corollary}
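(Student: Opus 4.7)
The plan is to apply the quasi-finite descent result Proposition~\ref{compacttheorem} to the geometric functor $\gamma^* : \dtmep \to \DTMe(\olQ, \ZZ/p)$ appearing in~(\ref{geofunctor}). By Theorem~\ref{localresult} together with Remark~\ref{reduction steps for e_p}, minimality at $e_p$ is equivalent to minimality at the unique closed point of the local category $\dtmep$, whose spectrum consists of only the two primes $m_0$ and $e_p$ (Theorem~\ref{maintheoremmartin}).

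With $\mc T = \dtmep$, $\mc S = \DTMe(\olQ,\ZZ/p)$, and $\MM = e_p$, I would verify the three hypotheses of Proposition~\ref{compacttheorem} in turn. For~(1), $\dtmep$ is local with a noetherian spectrum, so the local-to-global principle holds (Remark~\ref{localglobalnoeth}) and therefore so does the detection property (Remark~\ref{detectionprop}). For~(2), compactness of $\gamma_*(\unit_{\DTMe(\olQ,\ZZ/p)})$ was already observed in Remark~\ref{reduction steps for e_p}, using that $\ZZ/p \in D(\ZZ_{(p)})^c$ together with Remark~\ref{if R' in D(R)^c then strongly closed}. For~(3), the Rigidity Theorem~\ref{rigiditytheorem} yields $Spc(\DTMe(\olQ,\ZZ/p)^c) = \{(0)\}$, a singleton whose unique point necessarily maps to the closed point $e_p$ of $\dtmep$ under $Spc(\gamma^*)$.

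The remaining ingredient is minimality at the prime $(0) \in Spc(\DTMe(\olQ,\ZZ/p)^c)$, which is immediate from Corollary~\ref{subfields are fields}: since $\DTMe(\olQ,\ZZ/p)$ is a tt-field and hence stratified, the localizing ideal $\lt(g((0)))$ is trivially minimal among the nonzero localizing ideals. Proposition~\ref{compacttheorem} then transports this minimality along $\gamma^*$ to $e_p$. There is no real obstacle at this stage, since all the substantive input — the Rigidity Theorem, the computation of $Spc(\dtmep^c)$, and the descent proposition itself — has already been established upstream; the corollary amounts to a direct assembly of those pieces.
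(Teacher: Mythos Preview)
Your proposal is correct and follows essentially the same route as the paper: reduce to the closed point of $\dtmep$ via Remark~\ref{reduction steps for e_p}, apply Proposition~\ref{compacttheorem} to the geometric functor~(\ref{geofunctor}) using the compactness of $\gamma_*(\unit)$ noted there, and invoke Corollary~\ref{subfields are fields} for minimality in $\DTMe(\olQ,\ZZ/p)$. The only difference is that you spell out the verification of the three hypotheses of Proposition~\ref{compacttheorem} more explicitly than the paper does.
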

\begin{proof}
    By Remark~\ref{reduction steps for e_p} we need to prove the category \(\dtmep\) satisfies minimality at the unique closed point. Moreover we can apply Proposition~\ref{compacttheorem}, since ~\(\gamma_*(\unit_{\DTMe(\olQ, \ZZ/p)}) \in \dtmep^c \), to focus on establishing minimality at the unique prime in the local category \(\DTMe(\olQ, \ZZ/p)\). This then follows from Corollary \ref{subfields are fields}.
\end{proof}
\begin{theorem}
    The derived category of étale motives, \(\DTMe(\olQ, \ZZ)\) is stratified.
\end{theorem}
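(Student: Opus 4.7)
The plan is to verify the equivalent characterization of stratification given by Theorem~\ref{strattheorem}(a), which requires the local-to-global principle together with minimality at every prime of $Spc(\DTMe(\olQ, \ZZ)^c)$. By Theorem~\ref{maintheoremmartin}, the étalification functor identifies this spectrum homeomorphically with the subspace $\{m_0\} \cup \{e_p : p \text{ prime}\} \cong \Spec(\ZZ)$, which is noetherian. The local-to-global principle is therefore automatic by Remark~\ref{localglobalnoeth}, so it suffices to check minimality at each of the primes $m_0$ and $e_p$.

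By Theorem~\ref{localresult}, minimality at each such prime is a local question. For the height zero prime $m_0$, the plan is to identify the local category $\DTMe(\olQ, \ZZ)/\lt(m_0)$ with $\DTMe(\olQ, \QQ)$, which by Remark~\ref{etalification functor} is equivalent to $\dtmq$ because the étalification functor is an equivalence whenever $\QQ \subseteq R$. Minimality at the unique prime of $\dtmq$ was precisely Corollary~\ref{minatmo}. For each height one prime $e_p$, the plan is to identify the local category $\DTMe(\olQ, \ZZ)/\lt(e_p)$ with $\DTMe(\olQ, \ZZ_{(p)})$, which equals $\dtmep$ by Theorem~\ref{locgivesetal}. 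Minimality at the closed point of $\dtmep$ was established in Corollary~\ref{minatep}; alternatively, the same argument transfers directly by applying Proposition~\ref{compacttheorem} to the geometric functor $\gamma^{*} : \dtmep \to \DTMe(\olQ, \ZZ/p)$ induced by $\ZZ_{(p)} \to \ZZ/p$, using that the target is a tt-field by Corollary~\ref{subfields are fields} and that $\gamma_{*}(\unit)$ is compact by Remark~\ref{if R' in D(R)^c then strongly closed}.

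The main technical obstacle is justifying the two identifications of local categories, namely $\DTMe(\olQ, \ZZ)/\lt(m_0) \simeq \DTMe(\olQ, \QQ)$ and $\DTMe(\olQ, \ZZ)/\lt(e_p) \simeq \DTMe(\olQ, \ZZ_{(p)})$. The hope is to proceed in parallel with the integral case spelled out in Remark~\ref{height0reduction} and Remark~\ref{3difftypesofprimes}: the algebraic localizations $\ZZ \to \QQ$ and $\ZZ \to \ZZ_{(p)}$ induce change-of-coefficients geometric functors on étale motive categories, and these should realize the desired finite localizations with kernel generated by an appropriate compact object. Once these identifications are in hand, the theorem reduces to a direct appeal to the minimality results already proved in Sections~\ref{m_0section} and~\ref{e_psection}.
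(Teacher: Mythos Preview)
Your proposal is correct and follows essentially the same route as the paper: noetherian spectrum gives local-to-global, and minimality at $m_0$ and at each $e_p$ is then read off from the results of Sections~\ref{m_0section} and~\ref{e_psection}. The paper's own proof is terser---it simply cites Corollaries~\ref{minatmo} and~\ref{minatep} directly---whereas you make explicit the passage through the local categories $\DTMe(\olQ,\QQ)\simeq\dtmq$ and $\DTMe(\olQ,\ZZ_{(p)})=\dtmep$; this extra care is justified, since those corollaries are literally stated for $\dtmz$, and the transfer to the \'etale category is exactly the identification of local categories you flag as the technical obstacle.
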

\begin{proof}
    As \(Spc(\DTMe(\olQ, \ZZ)^c)\) is noetherian it automatically satisfies the local-to-global principle. Hence we just need to show minimality holds at each prime. By Theorem \ref{maintheoremmartin}, these primes are \(m_0\) and \(e_p\) for \(p\) prime, and we showed minimality holds at these primes in Corollaries \ref{minatep} and \ref{minatmo}.
\end{proof}
\section{Brown--Adams Representability and Stratification}\label{brownadamssection}
In this section, we return to Question~\ref{questioninintro} from the introduction in order to prove minimality holds at the height two prime \(m_p\) in Chapter \ref{m_psection}. To remind the reader, we suppose we are given two rigidly-compactly generated tt-categories \(\mc T_1\) and \(\mc T_2\), where \(\mc T_1\) is stratified and that we have a tt-equivalence between their compact parts \(\mc T_1^c \simeq \mc T_2^c\). Our question is whether this then implies that \(\mc T_2\) is stratified as well. While we are not able to answer this question in full generality, we are able to achieve positive results assuming our category satisfies Brown--Adams Representability, which we now explain. 

Let \(\mc T\) be a rigidly-compactly generated tt-category. The \emph{category of modules} on \(\mc T\) is the Grothendieck abelian category \[\mc A: = Mod(\mc T^c) : = Add((\mc T^c)^{op}, Ab)\] 
of contravarient additive functors from \(\mc T^c\) to abelian groups. The subcategory of finitely presented modules \(\mc A ^{fp} := mod(\mc T^c)\) coincides with the usual Freyd-envelope of \(\mc T^c\) \cite[Chapter~5]{neemantriangulatedcatbook}. For relevant information about this category of modules, we will be primarily following the recent series of papers by Balmer, Krause and Stevenson \cite{homsupport, ttfieldframesmashing, ttfieldnilpotence, ttfieldsrumination}.
\begin{remark}
    We have the restricted Yoneda functor \begin{align*}
    h: \hspace{.1cm}&\mc T \xrightarrow{}  \mc A \\
    & t \xrightarrow{} \hat{t}: = Hom(-, t)|_{\mc T^c}
\end{align*} which fits into the commutative square \[\begin{tikzcd}
	\mc T^c & \mc A^{fp} \\
	\mc T & \mc A.
	\arrow["h", hook, from=1-1, to=1-2]
	\arrow[hook, from=1-1, to=2-1]
	\arrow[hook, from=1-2, to=2-2]
	\arrow["h", from=2-1, to=2-2]
\end{tikzcd}\]
Note that restricted Yoneda is no longer an embedding in general, due to the potential existence of so-called phantom maps. Nevertheless, \(h: \mc T \to \mc A\) is conservative, that is \(\hat{t} = 0 \implies t=0\), because \(\mc T\) is compactly generated.  
\end{remark} 
\begin{definition} \label{dloc def}
     For \(S \subseteq \mc A\) we write \begin{enumerate}
     \item \(Loc_{\mc A}(S)\) to be the smallest Serre subcategory containing \(S\) closed under coproducts and suspension.
     \item \(Locid_{\mc A}(S)\) the smallest Serre subcategory containing \(S\) closed under coproducts and tensor products (and hence is automatically closed under suspension, see \cite[Remark~2.2]{ttfieldsrumination}).
 \end{enumerate}
\end{definition}
\noindent We summarize the facts we will need about \(\mc A\) below. This can be found in \cite[\S\S2-3]{ttfieldsrumination}. \begin{enumerate}
    \item [(1)] \(\mc A\) inherits a suspension \(\Sigma_{\mc A}\) such that \(h \circ \Sigma_{\mc T} = \Sigma_{\mc A} \circ h\).
    \item [(2)] \(\mc A\) is closed symmetric monoidal under Day convolution. Under this tensor product \(h: \mc T \to \mc A\) is symmetric monoidal, and moreover \(\hat{t} \otimes -\) is exact and colimit preserving for any \(t \in \mc T\). 
\end{enumerate}
\begin{definition}\label{homfunctordef}
     Sitting inside of \(\mc A\) is the full subcategory of \emph{Homological Functors}, which we will denote \(Hol(\mc T^c)\), consisting of those contravariant functors that send exact triangles to long exact sequences.  Clearly, \(\hat{t} \in Hol(\mc T^c)\) for any \(t \in \mc T\). Moreover, in certain important historic examples there has been a much stronger relationship between the essential image of restricted Yoneda and the subcategory of homological functors: 
\end{definition} 
\begin{theorem}\cite{OGbrownadamrepinSH}
    Let \(\mc T = \text{SH}\) denote the stable homotopy category, and \(\mc T^c\) the category of finite spectra. Then any homological functor \[\mc H: (\mc T^c)^{\text{op}} \to \mc Ab\]
    is isomorphic to \(h(t)\) for some \(t \in \mc T\). Moreover, any natural transformation \[h(t) \to h(s)\] is induced by some (non-unique) map \[t \to s.\]
\end{theorem}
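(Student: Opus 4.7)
The plan is to follow Adams's original strategy, representing the given homological functor by constructing a witnessing spectrum as a sequential colimit of finite spectra that progressively approximates \(\mc H\). The crucial enabling fact, which the excerpt already highlights in its discussion of Neeman's generalisation, is that \(\mc T^c = \text{SH}^c\) has only countably many isomorphism classes of objects; this keeps the construction \emph{sequential} rather than transfinite, which is what ultimately makes the limiting step work.

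First, I would initialise the approximation. Using Yoneda, an element \(\xi \in \mc H(X)\) for a finite \(X\) corresponds to a natural transformation \(h(X) \to \mc H\). Choosing a collection \(\{(X_i, \xi_i)\}_{i \in I}\) whose images jointly surject onto \(\mc H(X)\) for every \(X \in \mc T^c\), and setting \(t_0 = \bigvee_i X_i\), produces a natural transformation \(\rho_0 \colon h(t_0) \to \mc H\) that is componentwise surjective on every finite spectrum. Countability of \(\mc T^c\) lets us take \(I\) countable. Next, I would iteratively kill the kernel: having constructed \(t_n\) with \(\rho_n\) surjective, choose generating elements \(\{f_j \colon Y_j \to t_n\}\) of \(\ker(\rho_n)\) with \(Y_j \in \mc T^c\), and let \(t_{n+1}\) be the cofibre of \(\bigvee_j Y_j \to t_n\). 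Because \(\mc H\) is homological, \(\rho_n\) factors through this cofibre, yielding \(\rho_{n+1} \colon h(t_{n+1}) \to \mc H\) which kills the chosen kernel elements.

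Setting \(t := \colim_n t_n\), compactness of each \(X \in \mc T^c\) gives \([X, t] \cong \colim_n [X, t_n]\). A Milnor-type \(\varprojlim^1\) argument, combined with the homological exactness of \(\mc H\) applied to the telescope triangle, shows that \(h(t) \to \mc H\) is an isomorphism on every finite \(X\), hence an isomorphism in \(\mc A\). For the second assertion, given a natural transformation \(\varphi \colon h(t) \to h(s)\), I would apply the same construction to the graph-like homological functor \(\mc H_\varphi\) built from \(h(s)\) and \(\varphi\), and extract from the sequential approximation a compatible system of maps \(t_n \to s\) whose colimit provides the desired lift \(t \to s\); the non-uniqueness comes from choices made at each stage of killing the kernel.

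The main obstacle is precisely the countability condition: without it, one would have to iterate transfinitely, and the resulting higher \(\varprojlim^i\) terms could obstruct the final isomorphism \(h(t) \cong \mc H\). The non-uniqueness of the lift in the second statement is also unavoidable, reflecting the existence of phantom maps, i.e.\ the kernel of the natural comparison \([t,s] \to \mathrm{Nat}(h(t), h(s))\); any serious attempt to upgrade representability to an equivalence with \(\mathrm{Hol}(\mc T^c)\) must contend with this phenomenon.
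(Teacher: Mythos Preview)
The paper does not supply its own proof of this statement: it is quoted as a classical result of Adams, with a citation to his original paper, and serves purely as historical motivation for the definition of Brown--Adams representability that immediately follows. There is therefore no argument in the paper against which to compare your proposal. For what it is worth, your outline is a faithful summary of Adams's original strategy, and your identification of the countability of \(\mc T^c\) as the crucial enabling hypothesis is exactly the point the paper itself emphasises when invoking Neeman's generalisation.
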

\begin{remark}\label{brownadamsdef}
    Neeman, Keller and Christensen have investigated the extent to which this result generalizes to other rigidly-compactly generated tt-categories in \cite{onatheoremofbrownandadams, failureofbrownrepingeneral}. Following their terminology, we say that \(\mc T\) satisfies: \begin{enumerate}
    \item [(BRO)] If every \(\mc H \in Hol(\mc T^c)\) is isomorphic to \(h(t)\) for some \(t \in \mc T\).
    \item [(BRM)] If every natural transformation \(h(t) \to h(s)\) is induced by some (potentially non-unique) morphism \(t \to s\).
\end{enumerate}  
\end{remark}  
\begin{remark}
    It follows from results of Beligiannis (see \cite[Theorem~11.8]{relativehomologicalalg}) that (BRM) implies (BRO).
\end{remark}
\begin{definition}
    We say that \(\mc T\) satisfies \emph{Brown--Adams representability} if condition (BRM) holds.
\end{definition}
\begin{remark}\label{countable imples brown adams}
    Neeman shows \cite[Proposition~4.11,~Theorem.~5.1]{onatheoremofbrownandadams} that not every triangulated category satisfies Brown--Adams representability. However, he established a sufficient condition, namely that \(\mc T\) satisfies Brown--Adams representability if \(\mc T^c\) is equivalent to a countable category (that is, a category with only countably many objects and morphisms between them).
\end{remark}
\begin{remark}\label{showing iso using brown adams}
    Suppose \(\mc T\) is a compactly generated tt-category that satisfies Brown--Adams representability. Then it follows (see \cite[Remark~3.2]{onatheoremofbrownandadams}) that any isomorphism \(\hat{s} \xrightarrow{\sim} \hat{t}\) in \(\mc A\) is induced by an isomorphism \(s \xrightarrow{\sim} t\) in \(\mc T\).
\end{remark}
Let us now connect this back to the theory of stratification: 
\begin{definition}\cite[Remark~3.4]{ttfieldnilpotence}\label{homspectrumdef}
 The \emph{homological spectrum} of \(\mc T^c\), denoted \(Spc^h(\mc T^c)\), is the set of all maximal Serre tensor ideals in \(\mc A^{fp}\). We will refer to \(\beta \in Spc^h(\mc T^c)\) as a \emph{homological prime}.
\end{definition}
\begin{remark} \cite[Proposition~2.4]{homsupport}
     Let \(\beta \in Spc^h(\mc T^c)\) be a homological prime, and consider the quotient \[\overline{h}: \mc T \to \mc A \to \mc A /  Locid_{\mc A}(\beta). \]
     There is a unique pure-injective object \(E_{\beta} \in \mc T\) such that \[Locid_{\mc A}(\beta) = Ker( \hat{E}_{\beta} \otimes - ).\]
 \end{remark}
\begin{remark}\label{fieldEbobjects}
     These pure-injective objects \(E_{\beta}\), while abstractly defined, are often nice, recognizable objects. For example, in the derived category of a ring, we have that \(Spc^h(D(R)^c) \simeq Spc((D(R)^c)\), and recalling that \(Spc(D(R)^c) \simeq Spec(R)\), the \(E_{\beta}\) object corresponding to \(p \in Spec(R)\) is isomorphic to the residue field \(\kappa(p)\). Similarly in SH, we have \(Spc^h(\text{SH}^c) \simeq Spc(\text{SH}^c)\) and under this correspondence the \(E_\beta\) objects are isomorphic to the Morava K-theories \(K(p,n)\) (see \cite[Corollaries~3.3,~3.6]{balmercameron}). 
\end{remark}
\begin{definition}\label{field objects definition}
    We say an object \(F \in \mc T\) is a \emph{field object} if, for any \( t \in \mc T\), we have that \(t \otimes F \) is a coproduct of suspensions of \(F\).
\end{definition}
\begin{remark}
    In both examples in Remark \ref{fieldEbobjects}, the \(E_{\beta}\) objects are field objects. 
\end{remark}
\begin{remark}
Let us assume that \(\mc T\) is stratified. Then it follows that \(\mc T = \lt (E_\beta: \beta \in Spc^h(\mc T^c))\). Indeed, the local-to-global principle for \(\mc T\) tells us we have \(\mc T= \lt (g(\mc P): \mc P \in Spc(\mc T^c) )\). Moreover, since \(\mc T\) is stratified, the canonical comparison map \(\phi: Spc^h(\mc T^c) \to Spc(\mc T^c)\) is a homeomorphism \cite[Thrm.~4.7]{comparisonsupports}, so let us denote \(\beta_\mc P\) to be the unique homological prime corresponding to \(\mc P \in Spc(\mc T^c)\). It follows \cite[Lemma.~3.7]{comparisonsupports} that \(E_{\beta_\mc P} \in \lt (g(\mc P))\), and since \(\mc T\) is stratified, we get an equality \(\lt(E_{\beta_\mc P}) = \lt ( g(\mc P))\).
\end{remark}
\begin{remark}\label{assumptionsforthissection}
Let us now show how this can help us pass stratification from one category to another. Let \(\mc T_1\) and \(\mc T_2\) be rigidly-compactly generated tt-categories  and let \(F: \mc T_1^c \xrightarrow{\sim} \mc T_2^c\) be a tt-equivalence. Then this equivalence induces: \begin{enumerate}
    \item [(1)] an exact, tensor equivalence \(\hat{F}: \mc A_1 \xrightarrow{\sim} \mc A_2\); and
    \item [(2)] a homeomorphism \( Spc^h(F): Spc^h(\mc T_2^c) \xrightarrow{\sim} Spc^h(\mc T_1^c)\).
\end{enumerate}
Let \(\beta_1 \in Spc^h(\mc T_1^c)\) and let \(\beta_2\) be the unique homological prime in \(Spc^h(\mc T_2^c)\) mapping to \(\beta_1\). Consider the following diagram: \[\begin{tikzcd}
	\mc A_1 && \mc A_2 \\
	\\
	\overline{\mc A_1 } := \mc A_1 / Locid_{\mc A}(\beta_1) && \mc A_2 / Locid_{\mc A}(\beta_2) := \overline{\mc A_2}.
	\arrow["\widehat{F}", shift left=1, from=1-1, to=1-3]
	\arrow["{\widehat{L}_1}"', shift right=2, two heads, from=1-1, to=3-1]
	\arrow["{\widehat{R}_1}"', shift right=2, hook, from=3-1, to=1-1]
	\arrow["{\widehat{L}_2}"', shift right=2, two heads,  from=1-3, to=3-3]
	\arrow["{\widehat{R}_2}"', shift right=1, hook, from=3-3, to=1-3]
	\arrow["{\overline{F}}"', shift left=2, from=3-1, to=3-3]
\end{tikzcd}\] 
\end{remark}
\begin{proposition}
Keeping the notation and set up as in the above remark \ref{assumptionsforthissection}, we have
    \begin{enumerate}
    \item [(a)] \(\overline{F} \circ \widehat{L}_1 \simeq \widehat{L}_2 \circ \widehat{F}.\)
    \item [(b)] \(\overline{F}: \overline{\mc A_1 } \to  \overline{\mc A_2}\) is an equivalence.
    \item [(c)] \(\widehat{F} \circ \widehat{R}_1 \simeq \widehat{R}_2 \circ \overline{F}.\)
\end{enumerate}
\end{proposition}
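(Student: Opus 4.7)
The plan is to reduce all three statements to the single compatibility $\widehat{F}\bigl(\mathrm{Locid}_{\mc A}(\beta_1)\bigr) = \mathrm{Locid}_{\mc A}(\beta_2)$, after which parts (a)--(c) become applications of the universal property of the Serre quotient and uniqueness of adjoints. First I would verify this compatibility. Since $\widehat{F}$ is an exact symmetric monoidal equivalence, it sends Serre tensor ideals to Serre tensor ideals and preserves arbitrary colimits, hence sends $\mathrm{Locid}_{\mc A_1}(S)$ to $\mathrm{Locid}_{\mc A_2}(\widehat{F}(S))$ for any $S \subseteq \mc A_1$. By construction of $Spc^h(F)$ via restriction of $\widehat{F}$ to finitely presented objects, the hypothesis that $\beta_1 \in Spc^h(\mc T_1^c)$ corresponds to $\beta_2 \in Spc^h(\mc T_2^c)$ is precisely the statement that $\widehat{F}(\beta_1) = \beta_2$ inside $\mc A_2^{fp}$, from which the identity for the full localizing tensor ideals follows.

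For part (a), the composite $\widehat{L}_2 \circ \widehat{F} : \mc A_1 \to \overline{\mc A}_2$ is exact, coproduct preserving, and annihilates $\mathrm{Locid}_{\mc A}(\beta_1)$ by the compatibility just established. The universal property of the Gabriel quotient $\widehat{L}_1$ therefore produces a (unique up to natural isomorphism) exact functor $\overline{F} : \overline{\mc A}_1 \to \overline{\mc A}_2$ with $\overline{F} \circ \widehat{L}_1 \simeq \widehat{L}_2 \circ \widehat{F}$. For part (b), I would apply the same construction to the quasi-inverse $\widehat{F}^{-1}$ to obtain $\overline{F^{-1}} : \overline{\mc A}_2 \to \overline{\mc A}_1$. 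The two natural isomorphisms $\overline{F^{-1}} \circ \overline{F} \circ \widehat{L}_1 \simeq \widehat{L}_1 \circ \widehat{F}^{-1} \circ \widehat{F} \simeq \widehat{L}_1$ (and symmetrically) combined with essential surjectivity of $\widehat{L}_i$ force $\overline{F^{-1}} \circ \overline{F} \simeq \mathrm{id}$ and $\overline{F} \circ \overline{F^{-1}} \simeq \mathrm{id}$, so $\overline{F}$ is an equivalence.

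For part (c), I would use uniqueness of right adjoints. Since $\widehat{F}$ is an equivalence, $\widehat{F}^{-1}$ is both left and right adjoint to $\widehat{F}$; likewise for $\overline{F}$ and $\overline{F^{-1}}$. Composing adjunctions, $\widehat{L}_2 \circ \widehat{F}$ has right adjoint $\widehat{F}^{-1} \circ \widehat{R}_2$, while $\overline{F} \circ \widehat{L}_1$ has right adjoint $\widehat{R}_1 \circ \overline{F^{-1}}$. The isomorphism from (a) then forces $\widehat{F}^{-1} \circ \widehat{R}_2 \simeq \widehat{R}_1 \circ \overline{F^{-1}}$; precomposing with $\widehat{F}$ and postcomposing with $\overline{F}$ yields $\widehat{F} \circ \widehat{R}_1 \simeq \widehat{R}_2 \circ \overline{F}$.

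The main obstacle is the compatibility $\widehat{F}(\mathrm{Locid}_{\mc A}(\beta_1)) = \mathrm{Locid}_{\mc A}(\beta_2)$ in the first step: one must be careful that the map $Spc^h(F)$ of homological spectra is indeed the one induced by transport of Serre tensor ideals along the equivalence $\widehat{F}|_{\mc A^{fp}}$, and that taking $\mathrm{Locid}_{\mc A}(-)$ commutes with the exact, colimit preserving functor $\widehat{F}$. Once that bookkeeping is confirmed, everything else is a formal exercise in Gabriel localization and adjoint calculus, with no further input from the motivic or tt-geometric content of the paper.
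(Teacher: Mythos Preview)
Your proposal is correct and matches the paper's approach. The paper dispatches (a) and (b) with the same compatibility $\widehat{F}(\mathrm{Locid}_{\mc A}(\beta_1)) = \mathrm{Locid}_{\mc A}(\beta_2)$, phrased as ``giving a Serre subcategory of $\mc A^{fp}$ is equivalent to giving the localizing category it generates''; for (c) the paper writes out the Yoneda chain $\mc A_2(-, \widehat{R}_2\overline{F}c) \simeq \mc A_2(-, \widehat{F}\widehat{R}_1 c)$ step by step, which is exactly the uniqueness-of-right-adjoints argument you invoke, just unpacked.
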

\begin{proof}
    Parts (a) and (b) follow by definition of the \(\beta_i's\) and the fact that giving a Serre subcategory of \(\mc A^{fp}\) is equivalent to giving the localizing category it generates (see for example, \cite[Appendix~A. Remark~8]{ttfieldframesmashing}). Let us now prove (c). First note that (a) and (b) imply that we also have  \(\overline{F}^{-1} \circ \hat{L}_2 \simeq \hat{L}_1 \circ \hat{F}^{-1} \). Then for an arbitrary object \(c \in \overline{\mc A}_1\) we compute \begin{align*}
        \mc A_2 ( - , \hat{R}_2 \overline{F}c) & \simeq \overline{\mc A}_2 ( \hat{L}_2 ( - ) , \overline{F}(c) )\\
        & \simeq \overline{\mc A}_1 ( \overline{F}^{-1}\hat{L}_2 ( - ) , c) \\
        & \simeq \overline{\mc A}_1 ( \hat{L}_1 \hat{F}^{-1}( - ), c) \\
        & \simeq \mc A_1 ( \hat{F}^{-1}( - ), \hat{R}_1 (c) )\\
        & \simeq \mc A_2 ( - , \hat{F} \hat{R}_1 (c))
    \end{align*}
    and then we summon Yoneda.
\end{proof}
\begin{remark}
  The objects \(E_{\beta_i}\) are uniquely determined by the injective hull of the unit in \(\overline{\mc A_i}\). That is, letting \(\overline{\unit} \to \overline{E}_{\beta_i}\) be this injective hull, we have that \(E_\beta\) is the unique object in \(\mc T\) such that \(\widehat{E}_{\beta_i} = \hat{R}_i(\overline{E}_{\beta_i}) \in \mc A_i\) (see, for example \cite[2.11]{homsupport}).
\end{remark} 
\begin{proposition}\label{ebsgotoebs}
    Let \(\mc T_i, \mc A_i,  \beta_i\) be as in Remark \ref{assumptionsforthissection}. Then \(\hat{F}(\hat{E}_{\beta_1}) = \hat{E}_{\beta_2}\).
\end{proposition}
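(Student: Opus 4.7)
The plan is to reduce the statement to a uniqueness property of injective hulls in the quotient categories $\overline{\mc A}_i$, using the compatibility diagram just established in part (c) of the preceding proposition.

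First I would unpack the definitions: by the remark immediately preceding the proposition, $\hat{E}_{\beta_i}$ is characterized by $\hat{E}_{\beta_i} \simeq \hat{R}_i(\overline{E}_{\beta_i})$, where $\overline{\unit} \hookrightarrow \overline{E}_{\beta_i}$ is the injective hull of the unit in $\overline{\mc A}_i$. Applying $\hat{F}$ and using part (c) of the previous proposition, we obtain
\[
\hat{F}(\hat{E}_{\beta_1}) \;\simeq\; \hat{F}(\hat{R}_1(\overline{E}_{\beta_1})) \;\simeq\; \hat{R}_2(\overline{F}(\overline{E}_{\beta_1})).
\]
It thus suffices to show $\overline{F}(\overline{E}_{\beta_1}) \simeq \overline{E}_{\beta_2}$ in $\overline{\mc A}_2$.

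The second step is to identify $\overline{F}(\overline{E}_{\beta_1})$ with the injective hull of the unit in $\overline{\mc A}_2$. Since $\hat{F}$ is a tensor equivalence, so is $\overline{F}$, and in particular $\overline{F}(\overline{\unit}_{\overline{\mc A}_1}) \simeq \overline{\unit}_{\overline{\mc A}_2}$. Because $\overline{F}$ is an equivalence of abelian categories, it preserves monomorphisms, essential extensions, and injective objects (all of these being purely categorical notions detectable by $\mathrm{Hom}$). Consequently $\overline{F}$ sends the injective hull $\overline{\unit} \hookrightarrow \overline{E}_{\beta_1}$ to the injective hull $\overline{\unit} \hookrightarrow \overline{F}(\overline{E}_{\beta_1})$ in $\overline{\mc A}_2$. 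By uniqueness of the injective hull we conclude $\overline{F}(\overline{E}_{\beta_1}) \simeq \overline{E}_{\beta_2}$, and combining with the first step finishes the proof.

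There is essentially no obstacle here beyond bookkeeping: the work was already done in the previous proposition (ensuring the localization squares commute and that $\overline{F}$ is an equivalence), and in the characterization of $\hat{E}_\beta$ as the image under $\hat{R}$ of the injective hull of the unit. The one thing to be slightly careful about is that $\overline{F}$ really does preserve the tensor unit and injective hulls; this is immediate from its being a tensor equivalence of Grothendieck abelian categories.
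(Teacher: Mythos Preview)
Your proof is correct and is essentially the same as the paper's: both use the characterization $\hat{E}_{\beta_i} = \hat{R}_i(\overline{E}_{\beta_i})$, observe that the equivalence $\overline{F}$ sends the injective hull of the unit to the injective hull of the unit, and then invoke part (c) of the preceding proposition to conclude. The only difference is the order in which the two steps are presented, and you are slightly more explicit about why $\overline{F}$ preserves injective hulls.
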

\begin{proof}
    We have that \(\overline{F}\) will send the injective hull of the unit in \(\overline{\mc A_1}\) to the injective hull of the unit in \(\overline{\mc A_2}\). Hence we know that \(\overline{F}(\overline{E}_{\beta_1}) \simeq \overline{E}_{\beta_2}\). Then we compute that \begin{align*}
        \widehat{E}_{\beta_2} & = \widehat{R}_2(\overline{E}_{\beta_2})\\
         & = \widehat{R}_2 ( \overline{F}(\overline{E}_{\beta_1})) \\
         & = \widehat{F} (\widehat{R}_1( \overline{E}_{\beta_1}) ) \\
         & = \widehat{F} (\widehat{E}_{\beta_1})
    \end{align*}
    which is precisely what we wanted.
\end{proof}
\begin{hypothesis}\label{hypothesis for brown adams}
Let \(\mc T_1\) and \(\mc T_2\) be rigidly-compactly generated tt-categories. Suppose \begin{enumerate}
    \item [(1)] \(\mc T_1\) is stratified;
    \item [(2)]\(\mc T_1^c\) is equivalent to a countable category;
    \item [(3)] \(F: \mc T_1^c \to \mc T_2^c\) is a tt-equivalence; and 
    \item [(4)] For every nonzero homological functor \(\hat{t} \in \mc A_1\) there exists a nonzero map \(\hat{E}_\beta \otimes \hat{x} \to \hat{t}\) for some \(\beta \in Spc^h(\mc T_1^c)\) and compact \(x \in \mc T_1^c\).
\end{enumerate} 
We shall also fix \(\beta_1 \in Spc^h(\mc T_1^c)\) and let \(\beta_2\) denote the unique homological prime in \(Spc^h(\mc T_2^c)\) mapping to \(\beta_1\) as in Remark \ref{assumptionsforthissection}.
\end{hypothesis}
\begin{remark}
    Let \(\mc T_1\) and \(\mc T_2\) be two rigidly-compactly generated tt-categories and suppose they satisfy conditions \((1) - (3)\) of Hypothesis \ref{hypothesis for brown adams}. Then it follows from Proposition \ref{ebsgotoebs} that \(\mc A_1\) satisfies condition \((4)\) if and only if \(\mc A_2\) satisfies it.
\end{remark}
\begin{proposition}\label{E_b generate cat}
   Keeping everything as in Hypothesis \ref{hypothesis for brown adams}, we have \[\mc T_2 = Locid(E_\beta: \beta \in Spc^h(\mc T_2^c))\]
\end{proposition}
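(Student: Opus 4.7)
The plan is to argue by contradiction. Suppose $\mc L := \lt(E_\beta : \beta \in Spc^h(\mc T_2^c))$ is a proper localizing ideal of $\mc T_2$. First I would transfer the relevant hypotheses to $\mc T_2$: since $\mc T_2^c \simeq \mc T_1^c$ is equivalent to a countable category (by Hypotheses (2) and (3)), Neeman's criterion (Remark \ref{countable imples brown adams}) gives Brown--Adams representability (BRM) for $\mc T_2$. Next, using the tensor equivalence $\widehat{F}: \mc A_1 \simeq \mc A_2$, together with Proposition \ref{ebsgotoebs} (which yields $\widehat{F}(\hat{E}_{\beta_1}) = \hat{E}_{\beta_2}$) and the fact that $F^{-1}: \mc T_2^c \to \mc T_1^c$ is triangulated (so that pre-composition with $F^{-1}$ preserves the class of homological functors), hypothesis (4) yields its analogue in $\mc A_2$: for every nonzero homological $N \in \mc A_2$ there is a nonzero map $\hat{E}_{\beta_2} \otimes \hat{x}_2 \to N$ for some $\beta_2 \in Spc^h(\mc T_2^c)$ and $x_2 \in \mc T_2^c$.

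For the main argument, since $\mc L$ is generated by the set $\{E_\beta\}_{\beta \in Spc^h(\mc T_2^c)}$, standard Bousfield localization in the compactly generated category $\mc T_2$ furnishes a nonzero object $s \in \mc L^\perp$. Its restricted Yoneda image $\hat{s}$ is a nonzero homological functor in $\mc A_2$, so the transferred condition produces a nonzero morphism $\hat{E}_{\beta_2} \otimes \hat{x}_2 \to \hat{s}$. Since $h$ is monoidal, the source equals $\widehat{E_{\beta_2} \otimes x_2}$, and hence (BRM) for $\mc T_2$ lifts this to a morphism $f: E_{\beta_2} \otimes x_2 \to s$ in $\mc T_2$ which is necessarily nonzero (lest its image under $h$ vanish). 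But $E_{\beta_2} \otimes x_2 \in \mc L$ since $\mc L$ is a localizing tensor ideal containing every $E_\beta$, while $s \in \mc L^\perp$ by construction, forcing $f = 0$. This contradiction gives $\mc L = \mc T_2$.

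The main obstacle, I expect, will be the careful setup of the transfer step in the first paragraph -- one must verify that $\widehat{F}$ sends homological functors to homological functors, and check that the identification $\hat{E}_{\beta_2} \otimes \hat{x}_2 = \widehat{E_{\beta_2} \otimes x_2}$ places the relevant morphism in the exact form required to invoke (BRM) on a representable target $\hat{s}$. Once the transfer is in hand, the rest is a short formal play of Brown--Adams against the orthogonality $\mc L \perp \mc L^\perp$.
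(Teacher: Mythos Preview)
Your proposal is correct and follows essentially the same route as the paper: transfer Brown--Adams representability and hypothesis~(4) to $\mc T_2$ via the equivalence $\mc T_1^c\simeq\mc T_2^c$ and Proposition~\ref{ebsgotoebs}, then use the resulting nonzero maps $E_\beta\otimes x\to t$ to force $\mc L^\perp=0$ and hence $\mc L=\mc T_2$. The only cosmetic difference is that the paper rephrases the nonvanishing via the internal hom $[E_\beta,t]\neq 0$ and then concludes with the identity $\mc L={}^\perp(\mc L^\perp)$ for a set-generated localizing ideal, whereas you run the same orthogonality directly as a contradiction; neither packaging buys anything the other does not.
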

\begin{proof}
    First note that \(\mc T_2\) also satisfies Brown--Adams representability since \(\mc T_2^c\) is also countable due to the equivalence \(F: \mc T_1^c \to \mc T_2^c\). Hence restricted Yoneda is full so we have that for any nonzero \(t \in \mc T_2\) there is a nonzero map \( E_\beta \otimes x \to t\) for some compact \(x \in \mc T_2^c\), and \(\beta \in Spc^h(\mc T_2^c)\). Denoting \([-,-]\) to be the internal hom in \(\mc T_2\), we have \[Hom_{\mc T_2}( E_\beta \otimes x, t) \simeq Hom_{\mc T_2}( x , [E_\beta, t]) \]
    which implies that \([E_\beta, t]\) must be nonzero. Hence we have the following string of implications (compare the following proof to \cite[Theorem~6.4]{cosupport}): \begin{align*}
        t = 0 & \iff [E_\beta, t] = 0 \text{ for all } \beta \in Spc^h(\mc T_2^c) \\
        & \iff t \in \{E_\beta:  \beta \in Spc^h(\mc T_2^c)\}^\perp\\
        & \iff t \in (Locid(E_\beta: \beta \in Spc^h(\mc T_2^c)))^\perp.
    \end{align*}
In other words, letting \(\mc L := Locid(E_\beta: \beta \in Spc^h(\mc T_2^c)) \), we have that \(\mc L ^\perp = 0 \). Now, because \(\mc L\) is set-generated, it is a strictly localizing tensor ideal, see for example \cite[Proposition~3.5]{BHSStrat}. As a consequence we have that \(\mc L ={}^\perp(\mc L^\perp) \); see \cite[Remark~2.11]{cosupport}. Thus we have \(\mc L = {}^\perp(\mc L^\perp) = {}^\perp 0 = \mc T_2\). 
\end{proof}
\begin{proposition}\label{fieldobjects}
    Keep the assumptions as in Hypothesis \ref{hypothesis for brown adams} and suppose further that the \(E_\beta\) objects in \(\mc T_1\) are field objects, as in Definition \ref{field objects definition}. Then the same is true for the \(E_\beta\) objects in \(\mc T_2\). 
\end{proposition}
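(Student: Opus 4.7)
The plan is to transport the field-object property from $\mc T_1$ to $\mc T_2$ at the level of modules via the equivalence $\hat{F}: \mc A_1 \xrightarrow{\sim} \mc A_2$, and then lift the resulting isomorphism back to $\mc T_2$ using Brown--Adams representability.

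First I would observe that both $\mc T_1$ and $\mc T_2$ satisfy Brown--Adams representability: this holds for $\mc T_1$ by Hypothesis~\ref{hypothesis for brown adams}(2) together with Remark~\ref{countable imples brown adams}, and it passes to $\mc T_2$ because the tt-equivalence $F:\mc T_1^c \xrightarrow{\sim}\mc T_2^c$ forces $\mc T_2^c$ to be countable as well. In particular, property (BRO) holds in both categories, so every homological functor on $\mc T_i^c$ is of the form $\hat{t}$ for some $t\in\mc T_i$.

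Now fix $t_2\in\mc T_2$ and consider $\hat{t}_2\in\mc A_2$. Since $\hat{F}:\mc A_1\to\mc A_2$ is an exact tensor equivalence (Remark~\ref{assumptionsforthissection}), $\hat{F}^{-1}(\hat{t}_2)$ lies in $\mathrm{Hol}(\mc T_1^c)$, so by (BRO) there is some $t_1\in\mc T_1$ with $\hat{F}(\hat{t}_1)\simeq \hat{t}_2$. By hypothesis, $E_{\beta_1}$ is a field object in $\mc T_1$, so
\[
t_1\otimes E_{\beta_1}\;\simeq\;\coprod_{i\in I}\Sigma^{n_i}E_{\beta_1}.
\]
Applying restricted Yoneda, which is symmetric monoidal and preserves coproducts and suspension, yields an isomorphism $\hat{t}_1\otimes\hat{E}_{\beta_1}\simeq\coprod_{i}\Sigma^{n_i}\hat{E}_{\beta_1}$ in $\mc A_1$. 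Now apply the tensor equivalence $\hat{F}$: using Proposition~\ref{ebsgotoebs} to identify $\hat{F}(\hat{E}_{\beta_1})\simeq \hat{E}_{\beta_2}$, and using that an equivalence of abelian categories preserves coproducts and commutes with the inherited suspension, I obtain
\[
\hat{t}_2\otimes\hat{E}_{\beta_2}\;\simeq\;\hat{F}(\hat{t}_1\otimes\hat{E}_{\beta_1})\;\simeq\;\coprod_{i}\Sigma^{n_i}\hat{E}_{\beta_2}\;\simeq\;\widehat{\coprod_{i}\Sigma^{n_i}E_{\beta_2}}.
\]
Again using that restricted Yoneda is symmetric monoidal, the left-hand side is $\widehat{t_2\otimes E_{\beta_2}}$, so we have an isomorphism $\widehat{t_2\otimes E_{\beta_2}}\simeq\widehat{\coprod_{i}\Sigma^{n_i}E_{\beta_2}}$ in $\mc A_2$. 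Finally, I would invoke Brown--Adams representability for $\mc T_2$ in the form of Remark~\ref{showing iso using brown adams} to promote this to an honest isomorphism $t_2\otimes E_{\beta_2}\simeq\coprod_{i}\Sigma^{n_i}E_{\beta_2}$ in $\mc T_2$, showing $E_{\beta_2}$ is a field object.

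The main obstacle is verifying that the transport through $\hat{F}$ is compatible with all the structures in play (tensor product, coproducts, suspension, homologicality) and that the restricted Yoneda functor commutes with these operations; once these bookkeeping facts are in hand, the argument reduces to the purely formal step of lifting an isomorphism of modules to an isomorphism of objects via Brown--Adams. The identification $\hat{F}(\hat{E}_{\beta_1})\simeq \hat{E}_{\beta_2}$ supplied by Proposition~\ref{ebsgotoebs} is the crucial input that links the two field objects.
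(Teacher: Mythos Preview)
Your proof is correct and follows essentially the same route as the paper: reduce to an isomorphism at the level of modules, transport it through the tensor equivalence $\hat{F}$ using Proposition~\ref{ebsgotoebs}, and then lift back to $\mc T_2$ via Remark~\ref{showing iso using brown adams}. If anything, you are slightly more explicit than the paper about the (BRO) step needed to write $\hat{F}^{-1}(\hat{t}_2)$ as $\hat{t}_1$ for some $t_1\in\mc T_1$ before the field-object property can be applied.
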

\begin{proof}
    Note that, because of Brown--Adams representability, it suffices to prove that \[\hat{E}_{\beta_2} \otimes \hat{t} \simeq \coprod_{j \in I} \Sigma^{m_j} \hat{E}_{\beta_2}\]
    in \(\mc A_2\) (recall Remark \ref{showing iso using brown adams}). Moreover, since \(\hat{F}\) is an equivalence, we can check this after applying \(\hat{F}^{-1}\). Take \( 0 \ne t \in \mc T_2\). Then we have that \begin{align*}
        \widehat{F}^{-1}(\hat{t} \otimes \widehat{E}_{\beta_2}) & \simeq \hat{F}^{-1}(\hat{t}) \otimes \hat{F}^{-1}(\widehat{E}_{\beta_2}) \\
        & \simeq \hat{F}^{-1}(\hat{t}) \otimes \widehat{E}_{\beta_1}\\
        & \simeq \coprod_{j \in I} \Sigma^{m_j} \widehat{E}_{\beta_1} \\
        & \simeq \coprod_{j \in I} \Sigma^{m_j} \hat{F}^{-1}(\widehat{E}_{\beta_2}) \\
        & \simeq \hat{F}^{-1} \big ( \coprod_{j \in I} \Sigma^{m_j} (\widehat{E}_{\beta_2}) \big)
    \end{align*} Hence we have that \(\hat{t} \otimes \hat{E}_{\beta_2} \simeq \coprod_{j} \Sigma^{m_j} \hat{E}_{\beta_2}\). Since Brown--Adams representability holds for \(\mc T_2\) this isomorphism in \(\mc A_2\) is witnessed by an isomorphism in \(\mc T_2\).
\end{proof}
\begin{remark}
    The above assumption that the \(E_\beta\) objects are field objects is not as strong as it may sound. Indeed, recall from Remark \ref{fieldEbobjects} that they are field objects in many examples of interest.
\end{remark}
\begin{theorem} \label{stratifiedusingbrownadamstheorem}
    Let \(\mc T_1\) and \(\mc T_2\) be as in Hypothesis \ref{hypothesis for brown adams}. If the \(E_\beta\) objects are field objects (in either \(\mc T_1\) or \(\mc T_2\)), then \(\mc T_2\) is stratified.
\end{theorem}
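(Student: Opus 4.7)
The plan is to verify condition (a) of Theorem~\ref{strattheorem} for $\mc T_2$, namely the local-to-global principle together with minimality of $\lt(g(\mc P))$ at every prime $\mc P \in Spc(\mc T_2^c)$. First I would set up the spectra: since both the Balmer spectrum and the homological spectrum are determined by the compact part, the tt-equivalence $F$ induces compatible homeomorphisms $Spc(\mc T_1^c) \simeq Spc(\mc T_2^c)$ and $Spc^h(\mc T_1^c) \simeq Spc^h(\mc T_2^c)$. Since $\mc T_1$ is stratified the comparison map for $\mc T_1$ is a bijection, hence so is $\phi: Spc^h(\mc T_2^c) \to Spc(\mc T_2^c)$. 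In particular, $Spc(\mc T_2^c)$ is weakly noetherian, each $g(\mc P) \in \mc T_2$ is defined, and via Proposition~\ref{fieldobjects} every $E_\beta \in \mc T_2$ is a field object.

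The heart of the argument is the identification
\[
\lt(g(\mc P)) = \lt(E_\beta) \qquad \text{for each } \mc P \in Spc(\mc T_2^c),\; \beta := \phi^{-1}(\mc P).
\]
The inclusion $\supseteq$ is immediate from the general fact $E_\beta \in \lt(g(\phi(\beta)))$ in \cite[Lemma~3.7]{comparisonsupports}. For the reverse inclusion, I would use Proposition~\ref{E_b generate cat}: since $\unit \in \mc T_2 = \lt(E_{\beta'}: \beta' \in Spc^h(\mc T_2^c))$, tensoring with $g(\mc P)$ gives $g(\mc P) \in \lt(g(\mc P) \otimes E_{\beta'}: \beta')$. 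For $\beta' \ne \beta$, combining $E_{\beta'} \in \lt(g(\phi(\beta')))$ with the standard orthogonality $g(\mc P) \otimes g(\mc P') = 0$ at distinct weakly visible primes \cite[\S2]{BHSStrat} yields $g(\mc P) \otimes E_{\beta'} = 0$. Only the $\beta' = \beta$ summand survives, so $g(\mc P) \in \lt(E_\beta)$ as claimed.

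Granted this identification, minimality of $\lt(g(\mc P))$ reduces to minimality of $\lt(E_\beta)$, which follows from the standard fact that a field object generates a minimal localizing tensor ideal. An analogous tensoring argument yields the local-to-global principle: for any $t \in \mc T_2$, the relation $\unit \in \lt(E_{\beta'}: \beta')$ gives $t = t \otimes \unit \in \lt(t \otimes E_{\beta'}: \beta')$, and $E_{\beta'} \in \lt(g(\phi(\beta')))$ forces $t \otimes E_{\beta'} \in \lt(t \otimes g(\phi(\beta')))$ (localizing tensor ideals absorb arbitrary tensor products), so $t \in \lt(t \otimes g(\mc P): \mc P \in Spc(\mc T_2^c))$. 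The reverse inclusion of this equality is trivial.

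The main obstacle is the inclusion $\lt(g(\mc P)) \subseteq \lt(E_\beta)$, which uses hypothesis~(4) through Proposition~\ref{E_b generate cat} to express $\unit$ via the $E_\beta$'s, together with the orthogonality of the Balmer-Favi idempotents at distinct primes to isolate the correct $\beta$. Once this is in place, both minimality and the local-to-global principle fall out of the same tensoring pattern.
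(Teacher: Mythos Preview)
Your proposal is correct and follows essentially the same approach as the paper's proof: both establish the key identification $\lt(g(\mc P)) = \lt(E_{\beta_{\mc P}})$ by tensoring $g(\mc P)$ against the generation statement of Proposition~\ref{E_b generate cat} and killing the cross terms, then invoke the field-object property for minimality and the case $t=\unit$ (or its tensoring with $t$) for the local-to-global principle. The only cosmetic differences are that the paper deduces $g(\mc P)\otimes E_{\beta'}=0$ for $\beta'\ne\beta_{\mc P}$ directly from \cite[Lemma~3.7]{comparisonsupports} rather than via idempotent orthogonality, and obtains the inclusion $\lt(E_{\beta_{\mc P}})\subseteq\lt(g(\mc P))$ from the field-object decomposition of $g(\mc P)\otimes E_{\beta_{\mc P}}$ rather than from that same lemma.
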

\begin{proof}
     To prove that \(\mc T_2\) is stratified, we need to prove that the local-to-global principle holds and that \(Locid(g(\mc P) )\) is a minimal localizing ideal for all primes \(\mc P \in Spc(\mc T_2^c)\). Now, since \(\mc T_1\) is stratified, we have a bijection \(Spc^h(\mc T_1^c) \simeq Spc(\mc T_1^c)\) (see \cite[Theorem~4.7]{comparisonsupports}). Moreover, under the equivalence \(F: \mc T_1^c \to \mc T_2^c\) we also obtain \(Spc^h(\mc T_2^c) \simeq Spc(\mc T_2^c)\). Fix a \(\mc P \in Spc(\mc T_2^c)\) and let \(\beta_\mc P \in Spc^h(\mc T_2^c)\) be the corresponding homological prime. It follows from \cite[Lemma~3.7]{comparisonsupports} that \(g(\mc P) \otimes E_{\beta_\mc P} \ne 0 \), and \(g(\mc P) \otimes E_\beta = 0\) for all other \(\beta\). Then we have \begin{align*}
        Locid(g(\mc P) ) & = Locid(g(\mc P) ) \otimes \mc T_2\\
         & = Locid(g(\mc P) ) \otimes Locid(E_\beta: \beta \in Spc^h(\mc T_2^c)) \hspace{.3cm} \\
         & = Locid(g(\mc P) \otimes E_\beta : \beta \in Spc^h(\mc T_2^c))\\
         & = Locid(g(\mc P) \otimes E_{\beta_\mc P} )\\
         & \subseteq Locid(E_{\beta_\mc P}).
    \end{align*}
    Since \(E_{\beta_\mc P}\) is a field object it generates minimal localizing ideals. Indeed we have \[0 \ne \coprod_{j \in I} \Sigma^{m_j} E_{\beta_\mc P} \simeq g(\mc P) \otimes E_{\beta_\mc P} \in Locid(g(\mc P))\]
    Since localizing ideals are thick, \(Locid(g(\mc P))\) contains \(E_\beta\) which gives the reverse containment, \( Locid (E_{\beta_\mc P}) \subseteq Locid(g(\mc P))\). Note that this also proves that the localizing ideals \(Locid(g(\mc P))\) are minimal. Finally we note that, by Proposition \ref{E_b generate cat} \begin{align*}
        \unit \in &Locid(E_\beta: \beta \in Spc^h(\mc T^c)) = Locid(g(\mc P): \mc P \in Spc(\mc T^c) ),
    \end{align*} which shows the local-to-global principle also holds. Hence \(\mc T_2\) is stratified. 
\end{proof}
\section{Minimality at the Height Two Prime}\label{m_psection}
In this section we deduce minimality for the last remaining prime \(m_p\) in \(DTM(\olQ,\ZZ_{(p)})\). This allows us to conclude that \(\dtmz\) is stratified.
\begin{remark}\label{reductionremarkformp}
    We again consider the geometric functor \[\gamma^*_p : \dtmp \to DTM(\olQ, \ZZ/p)\]
induced by the ring map \(\ZZ_{(p)} \to \ZZ/p\). On spectra, we get the following picture 
\[\begin{tikzcd}
	{Spc(DTM(\olQ, \ZZ/p)^c)} & {Spc(\dtmp^c)} \\
	{\MM: =} {(0)} & {m_p} \\
	{\mc P: =} { \langle cone(\beta _p) \rangle } & {e_p} \\
	&  {m_0}
	\arrow[no head, from=2-1, to=3-1]
	\arrow[no head, from=2-2, to=3-2]
	\arrow[no head, from=3-2, to=4-2]
	\arrow[hook, from=1-1, to=1-2]
	\arrow[hook, from=3-1, to=3-2]
\end{tikzcd}\]
\noindent We again have that \(\gamma_*(\unit_{DTM(\olQ, \ZZ/p)}) \in \dtmp^c\), so by Proposition~\ref{compacttheorem} to prove minimality holds at the height 2 prime \(m_p\), we are reduced to proving minimality holds at the unique closed point \(\MM \in Spc(DTM(\olQ, \ZZ/p)^c)\).
\end{remark}
\begin{remark}
    The category of Tate motives with finite coefficients has another useful characterization as the derived category of filtered vector spaces, which we now describe. Let us denote  \(\ZZ^{op}Mod(\ZZ/p)\) to be the category of presheaves on the poset category \(\ZZ\) with coefficients in the category \(Mod(\ZZ/p)\) of \(\ZZ/p\)-modules. A presheaf \(M \in \ZZ^{op}Mod(\ZZ/p) \) is called a \emph{filtered-module} if \(M_{n,n+1}\) is a monomorphism for all \(n\). This is a quasi-abelian category, in the sense of \cite{quasiabaleiancat}, and so it can be derived, which we will denote by \(D_{fil}(\ZZ/p)\). Details of its construction can be found in \cite[\S3]{filteredmodules}. Importantly for us, it is rigidly-compactly generated; see \cite[Corollary~3.4]{filteredmodules}. Moreover, we have the following two equivalences:
\end{remark}
\begin{theorem}\cite[3.16]{derivedcatoffilterednotmartin} \label{derivedequivfilteredandpresheaf}
There is an equivalence of tt-categories between \(D_{fil}(\ZZ/p)\) and \(D(\ZZ^{op}Mod(\ZZ/p))\).
\end{theorem}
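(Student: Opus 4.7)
The plan is to apply the general formalism of derived categories of quasi-abelian categories developed by Schneiders in \cite{quasiabaleiancat}. Recall that for a quasi-abelian category \(\mathcal{Q}\), Schneiders constructs a canonical abelian category \(\mathcal{LH}(\mathcal{Q})\) (the \emph{left heart}) together with a fully faithful embedding \(\mathcal{Q} \hookrightarrow \mathcal{LH}(\mathcal{Q})\) that induces an equivalence of derived categories \(D(\mathcal{Q}) \simeq D(\mathcal{LH}(\mathcal{Q}))\). Thus, setting \(\mathcal{Q} = FilMod(\ZZ/p)\), our task reduces to identifying the left heart \(\mathcal{LH}(FilMod(\ZZ/p))\) with the abelian category \(\ZZ^{op}Mod(\ZZ/p)\) of all presheaves (without the monomorphism hypothesis on transition maps).

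To carry out this identification, I would first describe the embedding \(FilMod(\ZZ/p) \hookrightarrow \ZZ^{op}Mod(\ZZ/p)\) as an instance of the Schneiders embedding, and then verify the two key universal properties of the left heart: namely, that every presheaf appears as the cokernel of a monomorphism of filtered modules, and that the inclusion is exact in the appropriate quasi-abelian sense. Concretely, given an arbitrary presheaf \(M\) in \(\ZZ^{op}Mod(\ZZ/p)\), one factors each transition map \(M_{n+1} \to M_n\) into its epi-mono factorization and organizes the resulting images into a two-term complex of filtered modules; the cokernel of this complex recovers \(M\). This is precisely the content of \cite[3.16]{derivedcatoffilterednotmartin}, and once this identification is established, Schneiders' theorem furnishes the triangulated equivalence
\[
D_{fil}(\ZZ/p) = D(FilMod(\ZZ/p)) \simeq D(\mathcal{LH}(FilMod(\ZZ/p))) \simeq D(\ZZ^{op}Mod(\ZZ/p)).
\]

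The remaining task is to promote this triangulated equivalence to a tt-equivalence. Both sides carry a natural tensor structure: on presheaves this is Day convolution with respect to the monoidal structure on \(\ZZ\) given by addition, while on filtered modules it is the usual tensor product of filtrations. These two tensor products are manifestly compatible with the epi-mono factorization procedure on transition maps, and a straightforward check on representables (equivalently, on shifts of the unit filtration) confirms that the equivalence intertwines them symmetric-monoidally. The main obstacle, and the point where one must be most careful, is verifying that the Schneiders embedding is indeed exact and symmetric monoidal for the specific tensor structures in play — this amounts to a compatibility check between the image-factorization procedure and Day convolution, and is handled explicitly in the cited reference.
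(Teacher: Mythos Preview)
The paper does not supply its own proof of this statement: it is stated as a theorem with a bare citation to \cite[3.16]{derivedcatoffilterednotmartin} and then used as a black box. There is therefore no argument in the paper against which to compare your proposal.

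That said, your outline via Schneiders' left-heart machinery is the standard route to this result and is mathematically sound; in particular, the identification \(\mathcal{LH}(FilMod(\ZZ/p)) \simeq \ZZ^{op}Mod(\ZZ/p)\) together with Schneiders' equivalence \(D(\mathcal{Q}) \simeq D(\mathcal{LH}(\mathcal{Q}))\) is exactly how the cited reference proceeds. One small caution: your description of the presentation of an arbitrary presheaf as ``the cokernel of a monomorphism of filtered modules'' is slightly off --- objects of the left heart are cokernels of monomorphisms in \(\mathcal{Q}\), but the explicit presentation you sketch (via epi-mono factorizations of the transition maps) does not literally produce such a monomorphism; rather one uses the universal characterization of \(\mathcal{LH}\) as the abelianization with respect to the left-exact structure, and checks that \(\ZZ^{op}Mod(\ZZ/p)\) satisfies it. This is a cosmetic issue in the write-up, not a gap in the strategy.
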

\begin{proposition}\cite[Propositions~7.7,~7.9]{martinspecpaper}
There is an equivalence of triangulated categories \[pos: D_{fil}(\ZZ/p)^c \xrightarrow{\sim}  DTM(\olQ, \ZZ/p)^c\]that induces a bijection of thick tensor ideals. 
\end{proposition}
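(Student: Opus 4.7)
The plan is to construct the equivalence by identifying explicit generators on both sides and matching their endomorphism structure. On the filtered side, $D_{fil}(\ZZ/p)^c$ is generated as a thick subcategory by the ``step'' filtered modules $\ZZ/p\langle n\rangle$ concentrated at position $n$, together with the canonical transition maps $\ZZ/p\langle n+1\rangle \hookrightarrow \ZZ/p\langle n\rangle$. On the motivic side, $DTM(\olQ, \ZZ/p)^c$ is generated by the Tate twists $\ZZ/p(n)$ with structure maps the Bott elements $\beta_p:\ZZ/p(n)\to\ZZ/p(n+1)$. I would define $pos$ on generators by $\ZZ/p\langle n\rangle \mapsto \ZZ/p(n)$ and match transition data with (shifts of) Bott maps.

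To make the construction rigorous, I would pass to dg-enhancements: present the generating subcategory on each side as compact modules over a small dg-algebra (or dg-category), and exhibit a quasi-isomorphism between the two dg-algebras. The decisive input on the motivic side is the Hom calculation
\[\mathrm{Hom}^*_{DTM(\olQ, \ZZ/p)}(\ZZ/p(a), \ZZ/p(b)) \simeq H^{*,b-a}_{mot}(\olQ, \ZZ/p).\]
Since $\olQ$ is algebraically closed of characteristic zero, the Beilinson--Lichtenbaum theorem (now a theorem thanks to Voevodsky) identifies these groups with étale cohomology $H^*_{et}(\olQ, \mu_p^{\otimes(b-a)})$, which is $\ZZ/p$ concentrated in degree $0$ when $b-a \geq 0$ and vanishes otherwise. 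This matches exactly the combinatorial Hom-computation between the filtered step generators, so the two candidate dg-algebras are quasi-isomorphic, yielding a triangulated equivalence on compact modules.

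Essential surjectivity is then automatic, since both categories are generated as thick subcategories by the chosen generators, which match under $pos$. Upgrading to a tt-equivalence requires matching the tensor products: in $D_{fil}(\ZZ/p)$ the tensor product is Day convolution along the additive structure of $\ZZ$, so $\ZZ/p\langle a\rangle \otimes \ZZ/p\langle b\rangle \simeq \ZZ/p\langle a+b\rangle$, which mirrors $\ZZ/p(a)\otimes\ZZ/p(b) \simeq \ZZ/p(a+b)$ on the motivic side. Once the tt-equivalence is established, the induced bijection of thick tensor ideals is formal.

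The main obstacle is producing a \emph{symmetric monoidal} equivalence rather than merely a triangulated one: the Bott element $\beta_p$ carries non-trivial multiplicative information (the motivic cohomology ring of $\olQ$ with $\ZZ/p$ coefficients is polynomial on $\beta_p$ in non-negative weights), and this must be matched with the convolution product on filtered modules on the nose. This is most cleanly achieved by carrying out the entire argument in a symmetric monoidal enhancement, for instance using the stable symmetric monoidal $\infty$-category underlying $DTM(\olQ, \ZZ/p)$ and the analogous enhancement of $D_{fil}(\ZZ/p)$, so that monoidality is built into the construction of $pos$ rather than checked after the fact.
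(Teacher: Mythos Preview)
Your approach is plausible but differs from Gallauer's (sketched in the Remark following the Proposition). Rather than matching endomorphism dg-algebras of generators and passing through enhancements, Gallauer equips $D_{fil}(\ZZ/p)^c$ with a strongly hereditary t-structure and exploits the fact that, under such a t-structure, every object decomposes as a finite direct sum of shifts of objects in the heart; the equivalence is thereby reduced to an identification at the level of hearts. For the bijection of thick tensor ideals, he only checks that the functor is tensor \emph{on the heart}, not symmetric monoidal globally --- the hereditary decomposition then propagates this to all objects. The motivic vanishing you invoke via Beilinson--Lichtenbaum is the same input that makes the t-structure on the motivic side exist and be strongly hereditary, so the arithmetic core is shared; the packaging is what differs. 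Your route aims directly for a full symmetric monoidal $\infty$-equivalence, which is stronger than what the Proposition asserts (that upgrade is the subsequent Corollary) and is, as you correctly flag, the delicate step. The trade-off: your method is conceptually cleaner if one is willing to work $\infty$-categorically and yields the tt-equivalence in one shot, whereas Gallauer's method is more elementary and hands-on, separating the triangulated equivalence from the monoidal compatibility and avoiding any enhancement machinery, at the cost of obtaining only the bijection of ideals rather than a tt-equivalence in the first pass.
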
 
\begin{remark}
   The proof of this last proposition uses the existence of a strongly hereditary t-structure on \(D_{fil}(\ZZ/p)^c\) \cite[Lemma~7.6]{filteredmodules}. Moreover, to obtain the bijection of thick tensor ideals between the two categories, Gallauer only needed to show the functor is tensor on a certain subcategory of \(D_{fil}(\ZZ/p)^c\). In particular, the functor is tensor on the heart of the t-structure. Since the t-structure is strongly hereditary, and generates \(D_{fil}(\ZZ/p)^c\), every object is a finite sum of shifts of objects in the heart, see for example \cite{heridtstructure}. Using this, one can show that the functor is really tensor everywhere.
\end{remark}
\begin{corollary}
    The functor \(pos: D_{fil}(\ZZ/p)^c \to  DTM(\olQ, \ZZ/p)^c \) is a tensor triangulated equivalence.
\end{corollary}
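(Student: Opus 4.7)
The plan is to bootstrap from the known tensor compatibility of \(pos\) on the heart to all of \(D_{fil}(\ZZ/p)^c\), exploiting the fact recorded in the remark above that every compact object decomposes as a finite direct sum of shifts of heart objects.

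First I would fix \(X, Y \in D_{fil}(\ZZ/p)^c\). Strong hereditarity of the t-structure yields a natural isomorphism \(X \simeq \coprod_i \Sigma^{-i} H^i(X)\), with finitely many nonzero summands \(H^i(X)\) lying in the heart (compare the analogous statement established earlier for rational Tate motives, together with \cite{heridtstructure}), and similarly for \(Y\). Because the tensor product is biexact and commutes with shifts, this propagates to
\[X \otimes Y \simeq \coprod_{i,j} \Sigma^{-i-j}\bigl(H^i(X) \otimes H^j(Y)\bigr).\]

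Next I would apply \(pos\) to this decomposition. As a triangulated equivalence, \(pos\) preserves shifts and finite biproducts, and by the preceding remark it is already known to be tensor on the heart. Combining these facts gives a chain of natural isomorphisms
\[pos(X \otimes Y) \simeq \coprod_{i,j} \Sigma^{-i-j}\,pos(H^i(X)) \otimes pos(H^j(Y)) \simeq pos(X) \otimes pos(Y).\]
The unit isomorphism \(pos(\unit) \simeq \unit\) is immediate since \(\unit\) lies in the heart, where the tensor compatibility is already known.

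Finally I would verify that the resulting constraint isomorphisms are natural in both variables and satisfy the associativity, symmetry, and unit coherences of a symmetric monoidal functor. Naturality descends from the naturality of the splitting \(X \simeq \coprod_i \Sigma^{-i} H^i(X)\), which in turn rests on the vanishing of \(\mathrm{Ext}^{\ge 2}\) afforded by strong hereditarity. Each coherence axiom, after decomposing the relevant multi-fold tensor products into their heart components, reduces to the corresponding axiom in the heart, where it already holds. The main obstacle I anticipate is rigorously setting up the naturality of the hereditary decomposition and confirming that the associator and braiding are indeed induced component-wise from their heart counterparts; once this bookkeeping is in place, the tensor compatibility of \(pos\) on all compact objects follows formally from its tensor compatibility on the heart.
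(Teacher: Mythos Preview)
Your proposal is correct and follows essentially the same approach as the paper: the paper's argument (contained in the remark immediately preceding the corollary) is precisely that the strongly hereditary t-structure forces every compact object to split as a finite sum of shifts of heart objects, and since \(pos\) is already tensor on the heart, this propagates to all of \(D_{fil}(\ZZ/p)^c\). You supply considerably more detail on the coherence verification than the paper does (it simply asserts ``one can show that the functor is really tensor everywhere''), but the underlying idea is identical.
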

\begin{remark}\label{underivedequivgraded}
    Let \(M \in \ZZ^{op}Mod(\ZZ/p)\). Associated to \(M\) is the graded \(\ZZ/p[\beta]\)-module \(\bigoplus_{n \in \ZZ} M_n\) where \(\beta\) has degree \(-1\) and acts by~\(\beta: M \to M(1)\). Conversely, given a graded \(\ZZ/p[\beta]\)-module \(\bigoplus_{n \in \ZZ}M_n\) we get a presheaf sending \(n\) to \(M_n\) with transition maps given by \(\beta: M_n \to M_{n-1}\). This provides a tensor-equivalence between the Grothendieck categories \(\ZZ^{op}Mod(\ZZ/p)\) and \(Mod_{gr}(\ZZ/p[\beta])\), see for example \cite[Lemma.~2.2]{derivedcatofgradedring} and the references therein. 
\end{remark}
\begin{remark}
    Hence, by combining Theorem~\ref{derivedequivfilteredandpresheaf} and Remark~\ref{underivedequivgraded}, we obtain a tt-equivalence between \(D_{fil}(\ZZ/p)\) and \(D(Mod_{gr}(\ZZ/p[\beta]))\). The tt-geometry of graded \(\ZZ/p [\beta]\)-modules has been studied by Stevenson and Dell'Ambrogio, and more recently by Barthel, Heard and Sanders. Let us summarize what is needed for us below:
\end{remark}
\begin{theorem}\cite{derivedcatofgradedring, comparisonsupports} \label{derivedcatofgradedringtheorem} 
The category \(\mc T:= D(Mod_{gr}(\ZZ/p[\beta]))\) is stratified. As a consequence, \(D_{fil}(\ZZ / p)\) is stratified as well. Moreover, we have that:\begin{enumerate}
    \item [(1)] \(\mc T ^c\) is countable, and hence satisfies Brown--Adams representability.
    \item [(2)] The natural map \(\varphi: Spc^h(\mc T^c) \xrightarrow{\sim} Spc(\mc T^c)\) is a homeomorphism. Moreover, the `naive' homological support \[Supp^{h}_{\text{naive}}(t) : = \{\beta \in Spc^h(\mc T^c) : E_\beta \otimes t \ne 0\}\]
    corresponds with the actual homological support.
    \item [(3)] Letting \(p\) denote the unique prime corresponding to \(\beta \in Spc^h(\mc T^c)\), the homological primes \(E_{\beta} \simeq k(p)\) are field objects in \(\mc T\).
    \item [(4)] For all nonzero \(t \in \mc T\) there exists a non-zero map from \(E_\beta \otimes x \to t\) for some \(\beta \in Spc^h(\mc T^c)\) and \(x \in \mc T^c.\)
\end{enumerate}
\end{theorem}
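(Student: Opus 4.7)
The graded ring $R := \ZZ/p[\beta]$ (with $|\beta| = -1$) is a graded commutative principal ideal domain whose homogeneous prime spectrum $Spec^h(R)$ has exactly two points: the generic $(0)$ and the closed point $(\beta)$. The proof is essentially an assembly of known results on derived categories of graded commutative noetherian rings, and my plan is to establish the main stratification claim first and then verify (1)--(4) in turn.

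Stratification of $\mc T = D(Mod_{gr}(R))$ is a direct application of Dell'Ambrogio--Stevenson's graded analogue of the Hopkins--Neeman--Thomason classification theorem \cite{derivedcatofgradedring}: for any graded commutative noetherian ring $R$, the tensor localizing subcategories of $D(Mod_{gr}(R))$ are in order-preserving bijection with the subsets of $Spec^h(R)$, and this bijection matches the one coming from Balmer--Favi support by \cite{comparisonsupports}. In particular $\lt(g(\mc P))$ is minimal for every $\mc P \in Spc(\mc T^c)$, and the local-to-global principle holds already by noetherianity of the spectrum (Remark~\ref{localglobalnoeth}). The identification $Spc(\mc T^c) \simeq Spec^h(R)$ is standard in this setting. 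Stratification of $D_{fil}(\ZZ/p)$ is then immediate from the tt-equivalence combining Theorem~\ref{derivedequivfilteredandpresheaf} with Remark~\ref{underivedequivgraded}.

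For (1), $R$ is a countable ring, so the category of perfect complexes of graded $R$-modules has countably many isomorphism classes of objects, each with a countable endomorphism ring; passing to a skeleton gives a countable category equivalent to $\mc T^c$, and Brown--Adams representability then follows from Neeman's criterion (Remark~\ref{countable imples brown adams}). For (2), the comparison map $\varphi: Spc^h(\mc T^c) \to Spc(\mc T^c)$ is always surjective, and by \cite[Theorem~4.7]{comparisonsupports} it is bijective, hence a homeomorphism, whenever $\mc T$ is stratified; the agreement of the naive homological support with the actual homological support is similarly a consequence of stratification together with noetherianity. For (3), the identification $E_{\beta_p} \simeq k(p)$ with the graded residue field --- which is $\ZZ/p[\beta^{\pm 1}]$ when $p = (0)$ and $\ZZ/p$ concentrated in degree zero when $p = (\beta)$ --- proceeds by verifying that $k(p)$ is pure-injective and satisfies $\mathrm{Ker}(\widehat{k(p)} \otimes -) = Locid_{\mc A}(\beta_p)$; uniqueness of $E_\beta$ then forces the claim. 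The field object property is immediate, because any graded module over a graded field splits as a coproduct of shifts of that field, and this splitting is preserved by derived tensor product.

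For (4), which I expect to be the subtlest point, suppose $\hat t \ne 0$ in $\mc A$. Conservativity of restricted Yoneda gives $t \ne 0$ in $\mc T$, and stratification combined with the identification of the $E_\beta$ as residue fields in (3) yields $t \otimes E_\beta \ne 0$ for some $\beta$. Compact generation then supplies a nonzero morphism $x \to t \otimes E_\beta$ with $x \in \mc T^c$, and applying restricted Yoneda produces a nonzero map $\hat x \to \hat t \otimes \hat E_\beta$. The main obstacle is transposing this across the tensor product in $\mc A$ to obtain a nonzero map $\hat E_\beta \otimes \hat x \to \hat t$; I would resolve this by exploiting the explicit descriptions of the residue fields above together with their self-duality properties --- the field at $(\beta)$ is its own dual up to shift, and the field at $(0)$ is a graded field and therefore self-dual after an appropriate grading shift --- and by using flatness of $\hat E_\beta$ in $\mc A$ to carry the self-duality through restricted Yoneda, yielding the required transposition.
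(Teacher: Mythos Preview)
Your treatment of stratification and parts (1)--(3) is correct and matches the paper, which simply cites \cite[Theorem~5.7]{derivedcatofgradedring}, \cite[Theorem~4.7]{comparisonsupports}, and \cite[Example~5.3]{comparisonsupports} together with \cite[Lemma~4.2]{derivedcatofgradedring}.

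There is a genuine gap in (4). You pick some $\beta$ with $t \otimes E_\beta \ne 0$, produce a nonzero $x \to t \otimes E_\beta$ by compact generation, and then propose to transpose to a nonzero $E_\beta \otimes y \to t$ via ``self-duality'' of $E_\beta$. At the closed point this is fine, since $R/(\beta)$ is compact and self-dual up to shift and twist. At the generic point it fails: $E_{(0)} \simeq R[\beta^{-1}]$ is not compact, and one computes $[R[\beta^{-1}], R] = 0$ in $\mc T$ (write $R[\beta^{-1}]$ as the homotopy colimit of twists of $R$ along multiplication by $\beta$; the dual homotopy limit vanishes degreewise). So $E_{(0)}$ is not self-dual --- its internal dual is zero --- and being a graded field as a ring over itself says nothing about duality in $\mc T$. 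Worse, the example $t = \unit$ shows the strategy cannot be repaired at that prime: $\unit \otimes E_{(0)} \ne 0$, yet $\textup{Hom}(E_{(0)} \otimes x, \unit) \simeq \textup{Hom}(x, [E_{(0)}, \unit]) = 0$ for every $x$, so no nonzero map $E_{(0)}\otimes x \to \unit$ exists at all. The $\beta$ in statement (4) must therefore be chosen more carefully than ``any $\beta$ in the support of $t$''. The paper handles (4) by direct citation of \cite[Proposition~4.7]{derivedcatofgradedring}, whose argument is module-theoretic and does not pass through duality. A minor additional point: statement (4) asks for a map in $\mc T$, not in $\mc A$; your framing in terms of $\hat t$ and $\hat E_\beta$ conflates it with the subsequent proposition in the paper.
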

\begin{proof}
That \(\mc T\) is stratified is \cite[Theorem~5.7]{derivedcatofgradedring}. Part \((2)\) is a general consequence of stratification, see \cite[Theorem~4.7]{comparisonsupports} and the remark after it. Part \((3)\) is \cite[Example~5.3]{comparisonsupports} and \cite[Lemma~4.2]{derivedcatofgradedring}. Finally, part \((4)\) is \cite[Proposition~4.7]{derivedcatofgradedring}.
\end{proof}
\begin{proposition}
    Continue to let \(\mc T:= D(Mod_{gr}(\ZZ/p[\beta]))\) and denote by \(\mc A = Add((\mc T^c)^{op}, \mc Ab)\) the category of modules on \(\mc T\) as in \S\ref{brownadamssection}. Then for all non-zero homological functors \(0 \ne \hat{t} \in \mc A\) there exists a non-zero map \(\hat{E}_\beta \otimes \hat{x} \to \hat{t}\) for some \(\beta \in Spc^h(\mc T^c)\) and \(x \in \mc T^c\).
\end{proposition}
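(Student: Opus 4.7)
The plan is to combine Brown--Adams representability with Theorem~\ref{derivedcatofgradedringtheorem}(4) (the \(\mc T\)-level analogue of the proposition), transporting the resulting map from \(\mc T\) to \(\mc A\). Since \(\mc T^c\) is countable by Theorem~\ref{derivedcatofgradedringtheorem}(1), Neeman's criterion (Remark~\ref{countable imples brown adams}) gives that \(\mc T\) satisfies Brown--Adams representability. In particular condition (BRO) of Remark~\ref{brownadamsdef} holds, so any nonzero homological functor \(\hat t \in \mc A\) is isomorphic to \(h(t)\) for some \(t \in \mc T\), with \(t \ne 0\) by conservativity of \(h\).

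Applying Theorem~\ref{derivedcatofgradedringtheorem}(4) to this \(t\) produces a nonzero morphism \(f \colon E_\beta \otimes x \to t\) in \(\mc T\) for some \(\beta \in Spc^h(\mc T^c)\) and compact \(x \in \mc T^c\), and the monoidality of \(h\) induces \(\hat f \colon \hat E_\beta \otimes \hat x \to \hat t\) in \(\mc A\). The main task is to ensure that, after perhaps modifying the choice of \(\beta\) and \(x\), the induced morphism \(\hat f\) is nonzero, since in general a nonzero map in \(\mc T\) can be phantom and thus induce the zero natural transformation in \(\mc A\).

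To handle the phantom issue I would use the field-object property of Theorem~\ref{derivedcatofgradedringtheorem}(3) together with the compactness of \(x\). Because \(x\) is a perfect complex and \(E_\beta\) is the graded residue field at a prime of \(\ZZ/p[\beta]\), the tensor \(E_\beta \otimes x\) is a \emph{finite} coproduct \(\bigoplus_j \Sigma^{n_j} E_\beta\) of shifts of \(E_\beta\); since \(h\) preserves finite coproducts, the same decomposition holds in \(\mc A\), and the proposition reduces to producing a nonzero element of \(Hom_{\mc A}(\hat E_\beta, \Sigma^n \hat t)\) for some \(\beta\) and some \(n\). Equivalently, via Brown--Adams I must exhibit a non-phantom morphism \(E_\beta \to \Sigma^n t\) in \(\mc T\), which I would construct by choosing \(\beta\) according to the graded-cohomological behaviour of \(t\): the closed residue field \(\ZZ/p\) detects \(\beta\)-torsion and \(\beta\)-cotorsion in \(H^*(t)\), while the generic residue field \(\ZZ/p[\beta,\beta^{-1}]\) detects the \(\beta\)-invertible part, and one of these must be nontrivial when \(t \ne 0\). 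Non-phantomness is then certified by pairing with the compact generators \(\ZZ/p[\beta](k)\) via the canonical unit maps \(\ZZ/p[\beta] \twoheadrightarrow \ZZ/p\) and \(\ZZ/p[\beta] \hookrightarrow \ZZ/p[\beta,\beta^{-1}]\), and this verification---where the concrete structure of graded modules over the graded PID \(\ZZ/p[\beta]\) is essential---is the hardest part of the argument.
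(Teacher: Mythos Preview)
Your opening reduction is exactly right and matches the paper: use (BRO) to represent \(\hat t\) by some \(t\in\mc T\), invoke Theorem~\ref{derivedcatofgradedringtheorem}(4), and then confront the phantom problem. Where you diverge is in how that problem is resolved, and the part you flag as ``hardest'' is precisely where your argument is not yet complete.

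The paper's key structural observation, which your plan does not exploit, is that \(E_{\beta_\MM}\simeq k(\mathfrak m)\) is \emph{compact} in \(\mc T\), because \(\ZZ/p[\beta]\) is graded-regular. Hence if Theorem~\ref{derivedcatofgradedringtheorem}(4) produces a nonzero \(f\) with \(\beta=\beta_\MM\), then \(E_{\beta_\MM}\otimes x\) is itself compact and \(\hat f\) is automatically nonzero --- no pairing argument needed. Your approach to the closed point (precompose with \(\ZZ/p[\beta]\twoheadrightarrow\ZZ/p\)) is therefore more indirect than necessary, and in fact you would still owe an argument that this composite is nonzero.

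For the generic point the paper takes a completely different route from your cohomological-detection idea. If no nonzero map \(E_{\beta_\MM}\otimes x\to t\) exists for any compact \(x\), the paper deduces \([E_{\beta_\MM},t]=0\), hence \(E_{\beta_\MM}\otimes t=0\), hence \(\MM\notin Supp(t)\). Then \(t\simeq f_*f^*(t)\) for the finite localization \(f^*:\mc T\to\mc T_{\mc P}\simeq D(\ZZ/p)\). Since \(D(\ZZ/p)\) is the derived category of a field it is phantomless; the nonzero \(\alpha:E_{\beta_{\mc P}}\otimes x\to t\) supplied by Theorem~\ref{derivedcatofgradedringtheorem}(4) has \(f^*(\alpha)\ne 0\) (via the counit isomorphism \(t\simeq f_*f^*(t)\)), and commutativity of restricted Yoneda with \(f^*\) then forces \(\hat\alpha\ne 0\). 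Your proposal to certify non-phantomness at \(\mc P\) by precomposing with \(\ZZ/p[\beta]\hookrightarrow\ZZ/p[\beta,\beta^{-1}]\) might be pushed through with enough care about \(\lim^1\) contributions, but the paper's localization-to-a-phantomless-category trick sidesteps all of that computation.
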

\begin{proof}
    As the corresponding statement holds for \(\mc T\) by Theorem \ref{derivedcatofgradedringtheorem}, we really have to just show there are no phantom maps out of \(E_\beta \otimes x\) for any \(\beta \in Spc^h(\mc T^c)\) and \(x \in \mc T^c\). To do so, first note that the graded ring \(R: = \ZZ/p[\beta]\) is a graded-local, regular, noetherian ring (see \cite[\S1.5]{gradedregularrings} for example). Letting \(\mathfrak{m}\) denote the unique maximal ideal in \(R\), we have \(E_{\beta_\MM} \simeq k(\mathfrak{m}) \in \mc T^c\) because \(R\) is graded-regular. Now let \(0 \ne t \in \mc T\) be arbitrary. If there is a nonzero map \(f: E_{\beta_\MM} \otimes x \to t\) for some \(x \in \mc T^c\), then the map \(\hat{f}: \hat{E}_{\beta_\MM} \otimes \hat{x} \to \hat{t}\) remains nonzero since there can be no phantom maps out of compact objects. In this case the proof is complete.

    Otherwise, assume there are no-nonzero maps \(E_{\beta_\MM} \otimes x \to t\) for any compact object \(x \in \mc T^c\). This implies that \([E_{\beta_\MM}, t] = 0\), which, since \(E_{\beta_\MM} \in \mc T^c\), tells us that \([E_{\beta_\MM}, \unit] \otimes t = 0\). Since \(E_{\beta_\MM}\) is a direct summand of \(E_{\beta_\MM} \otimes [E_{\beta_\MM}, \unit] \otimes E_{\beta_\MM}\) this forces \(E_{\beta_\MM} \otimes t = 0\) and so \(E_{\beta_\MM} \notin Supp^h(t)\). Again using that \(\mc T\) is stratified, we conclude that \(\MM \notin Supp(t)\). Now recall from Remark \ref{ g(P) objects in two-point case} that \(g(\MM) = e_{\MM}\) and \(g(\mc P) = f_{\MM}\), and so we get \(e_{\MM} \otimes t = 0\), and \(t \simeq t \otimes f_{\MM}\). Let \(f^*: \mc T \to \mc T_{\mc P}\) be the finite localization associated to \(\mc P\). Recall that \(f_*\) is fully faithful, and that \(f_*(\unit) = f_{\MM}\). Then using the projection formula we get \(f_*f^*(t) = t \otimes f_{\MM}  \simeq t\). Moreover, Gallauer showed that \(\mc T_\mc P \simeq D(\ZZ/p)\) as tensor-triangulated categories (see \cite[Lemma~3.7, Lemma~5.3]{filteredmodules}). Indeed, when thinking of \(\mc T\) as the category of filtered \(\ZZ/p\)-vector spaces, the functor \(f^*: \mc T \to \mc T_\mc P\) corresponds to forgetting the filtration. Now, by assumption we know that there is a nonzero map \(\alpha: E_{\beta_\mc P} \otimes x \to t\) for some compact \(x\). Our goal is to show that \(\hat{\alpha}\) remains nonzero in \(\mc A\). To do so, consider the following commutative diagram: \[\begin{tikzcd}
	\mc T & \mc A \\
	\mc T_\mc P & \mc A_{\mc P}
	\arrow["h", from=1-1, to=1-2]
	\arrow["{f^*}"', from=1-1, to=2-1]
	\arrow["{\hat{f}^*}", from=1-2, to=2-2]
	\arrow["h"', hook, from=2-1, to=2-2]
\end{tikzcd}\]
Since \(\mc T_\mc P\) is the derived category of a field, it is phantomless, and so to show that \(\hat{\alpha}\) is nonzero in \(\mc A\), by the commutativity of the above diagram, it suffices to show that \(f^*(\alpha) \ne 0\). To do so, let \(\eta\) denote the counit of the adjunction \(f^* \dashv f_*\). Recall that since \(f_*\) is fully faithful \(\eta\) is an isomorphism. Finally, consider the following commutative diagram: \[\begin{tikzcd}
	{E_{\beta_\mc P} \otimes x} & t \\
	{f_*f^*(E_{\beta_\mc P} \otimes x)} & {f_*f^*(t)}
	\arrow["\alpha", from=1-1, to=1-2]
	\arrow["\eta"', from=1-1, to=2-1]
        \arrow["\simeq", from=1-1, to=2-1]
	\arrow["\eta", from=1-2, to=2-2]
        \arrow["\simeq"', from=1-2, to=2-2]
	\arrow["{f_*f^*(\alpha)}"', from=2-1, to=2-2]
\end{tikzcd}\]
This implies that \(f_*f^*(\alpha) \ne 0\) and so \(f^*(\alpha) \ne 0\) as desired.
\end{proof}
This allows us to use the results from \S\ref{brownadamssection} to immediately conclude: 
\begin{theorem}\label{dtmmodpstrattheorem}
The category \(DTM(\olQ, \ZZ/p)\) is stratified.
\end{theorem}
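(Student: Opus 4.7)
The plan is a direct application of Theorem~\ref{stratifiedusingbrownadamstheorem} with $\mc T_1 := D(Mod_{gr}(\ZZ/p[\beta]))$ and $\mc T_2 := DTM(\olQ, \ZZ/p)$. All the ingredients needed to invoke that theorem have essentially been assembled in the preceding remarks; the proof is really a matter of checking Hypothesis~\ref{hypothesis for brown adams} and assembling the tt-equivalence on compact parts.

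First I would record the tt-equivalence $\mc T_1^c \simeq \mc T_2^c$. Combining the equivalence $D(Mod_{gr}(\ZZ/p[\beta])) \simeq D(\ZZ^{op}Mod(\ZZ/p))$ from Remark~\ref{underivedequivgraded}, the equivalence $D(\ZZ^{op}Mod(\ZZ/p)) \simeq D_{fil}(\ZZ/p)$ from Theorem~\ref{derivedequivfilteredandpresheaf}, and the tt-equivalence $pos: D_{fil}(\ZZ/p)^c \xrightarrow{\sim} DTM(\olQ,\ZZ/p)^c$, we obtain a tt-equivalence $F: \mc T_1^c \xrightarrow{\sim} \mc T_2^c$ as required in Hypothesis~\ref{hypothesis for brown adams}(3).

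Next I would verify the remaining three conditions of Hypothesis~\ref{hypothesis for brown adams} using Theorem~\ref{derivedcatofgradedringtheorem}. Condition (1) (stratification of $\mc T_1$) is part of Theorem~\ref{derivedcatofgradedringtheorem}. Condition (2) (countability of $\mc T_1^c$, whence Brown--Adams representability by Remark~\ref{countable imples brown adams}) is also part of Theorem~\ref{derivedcatofgradedringtheorem}. Condition (4) on nonzero maps from $\hat{E}_\beta \otimes \hat{x}$ is exactly the content of the proposition immediately preceding this theorem. Finally, the $E_\beta$ objects in $\mc T_1$ are field objects in the sense of Definition~\ref{field objects definition} by Theorem~\ref{derivedcatofgradedringtheorem}(3), since they are the residue fields $k(\mathfrak{p})$ of the graded-regular noetherian ring $\ZZ/p[\beta]$.

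With these verifications in hand, Theorem~\ref{stratifiedusingbrownadamstheorem} applies directly and yields stratification of $\mc T_2 = DTM(\olQ,\ZZ/p)$. The real work of this proof has been front-loaded into the previous sections: establishing the chain of tt-equivalences on compacts, and, most importantly, verifying the technical phantom-map input (condition~(4) of Hypothesis~\ref{hypothesis for brown adams}) in the graded-module side. Once those are in place, there is essentially no remaining obstacle — the proof is simply a clean invocation of the transport machinery developed in \S\ref{brownadamssection}.
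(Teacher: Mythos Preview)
Your proposal is correct and follows essentially the same route as the paper: both set up the tt-equivalence on compacts between $DTM(\olQ,\ZZ/p)^c$ and the filtered/graded side, verify the four conditions of Hypothesis~\ref{hypothesis for brown adams} (stratification, countability, field objects, and the phantom-map condition from the preceding proposition) via Theorem~\ref{derivedcatofgradedringtheorem}, and then invoke Theorem~\ref{stratifiedusingbrownadamstheorem}. The only cosmetic difference is that the paper names $D_{fil}(\ZZ/p)$ as $\mc T_1$ whereas you name $D(Mod_{gr}(\ZZ/p[\beta]))$, but since these are identified in the paper this is immaterial.
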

\begin{proof}
We have a tt-equivalence \(D_{fil}(\ZZ/p)^c \simeq DTM(\olQ, \ZZ/p)^c\) where \(D_{fil}(\ZZ/p)\) is stratified, the subcategory of compact objects is countable, and whose homological primes \(E_\beta\) are field objects by Theorem \ref{derivedcatofgradedringtheorem}. The last proposition gives us the final assumption needed to apply Theorem \ref{stratifiedusingbrownadamstheorem} to conclude that \(DTM(\olQ, \ZZ/p)\) is stratified as well.
\end{proof}
As a result, keeping in mind Remark \ref{reductionremarkformp}, we conclude that:
\begin{corollary}\label{minatmp}
    Minimality holds at the height 2 prime \(m_p\).
\end{corollary}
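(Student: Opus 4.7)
My plan is to combine the stratification of $DTM(\olQ, \ZZ/p)$ established in Theorem \ref{dtmmodpstrattheorem} with the descent result Proposition \ref{compacttheorem}, applied along the geometric functor $\gamma_p^*: \dtmp \to DTM(\olQ, \ZZ/p)$ induced by the quotient $\ZZ_{(p)} \to \ZZ/p$.

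The first step is to unpack Remark \ref{reductionremarkformp}, which has already arranged the necessary inputs. Namely, the induced map on Balmer spectra sends the unique closed point $\MM \in Spc(DTM(\olQ, \ZZ/p)^c)$ to $m_p \in Spc(\dtmp^c)$, and $\MM$ is the only preimage of $m_p$. Since $\ZZ_{(p)}$ is regular with $\ZZ/p \in D(\ZZ_{(p)})^c$, we have $\gamma_*(\unit) \in \dtmp^c$ by Remark \ref{if R' in D(R)^c then strongly closed}. Moreover, $\dtmp$ is local with closed point $m_p$ and satisfies the detection property, since its spectrum is a subspace of the noetherian spectrum of $DTM(\olQ, \ZZ)^c$ and so the local-to-global principle (hence detection) holds. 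These are exactly the hypotheses of Proposition \ref{compacttheorem}, which therefore reduces the problem to verifying minimality at $\MM$ inside $DTM(\olQ, \ZZ/p)$.

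The second step is immediate: by Theorem \ref{dtmmodpstrattheorem} the category $DTM(\olQ, \ZZ/p)$ is stratified, which by the characterization (a) of Theorem \ref{strattheorem} means $\lt(g(\mc Q))$ is a minimal localizing tensor ideal for every prime $\mc Q \in Spc(DTM(\olQ, \ZZ/p)^c)$, and in particular for $\mc Q = \MM$. Chaining this with the previous step yields minimality at $m_p$.

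There is no serious obstacle left at this stage: all the genuine content has already been packaged, on the one hand, into the descent statement Proposition \ref{compacttheorem} and, on the other hand, into Theorem \ref{dtmmodpstrattheorem}, whose proof required the Brown--Adams machinery of Section \ref{brownadamssection} together with the identification of $DTM(\olQ, \ZZ/p)^c$ with the compact part of the well-understood graded-derived category $D(Mod_{gr}(\ZZ/p[\beta]))$. The hardest part of the whole chapter is thus upstream of this corollary; here we simply harvest.
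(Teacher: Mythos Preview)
Your proposal is correct and follows exactly the paper's approach: the paper deduces this corollary simply by invoking Remark~\ref{reductionremarkformp} (which packages the hypotheses of Proposition~\ref{compacttheorem} for the functor $\gamma_p^*$) together with the stratification of $DTM(\olQ,\ZZ/p)$ from Theorem~\ref{dtmmodpstrattheorem}. You have merely made explicit the verification of the hypotheses that the paper leaves implicit.
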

\section{Stratification of Tate Motives}
Let us bring the results from Sections~\ref{m_psection}, \ref{e_psection}, and \ref{m_0section} together into our main theorem. 
\begin{theorem}[Stratification]\label{maintheoremdtmstrat}
    The category \(DTM(\olQ, \ZZ)\) is stratified.
\end{theorem}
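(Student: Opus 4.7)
The plan is to assemble the results established in the previous three sections. By Theorem~\ref{maintheoremmartin}, the Balmer spectrum $Spc(DTM(\olQ,\ZZ)^c)$ is noetherian, so by Remark~\ref{localglobalnoeth} the local-to-global principle is automatic. Consequently, by the characterization of stratification in Theorem~\ref{strattheorem}, it suffices to verify that $Locid(g(\mc P))$ is a minimal localizing ideal for every prime $\mc P \in Spc(DTM(\olQ,\ZZ)^c)$.

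The primes in $Spc(DTM(\olQ,\ZZ)^c)$ fall into three families: the generic point $m_0$, the height one primes $e_p$ (for $p$ a prime number), and the height two primes $m_p$. I would handle these in turn: minimality at $m_0$ is Corollary~\ref{minatmo}, obtained by reducing to the rational category $\dtmq$ via the finite localization $\gamma^*_\QQ$ and exploiting the classification of localizing ideals through the heart $\AT$ of the motivic t-structure. Minimality at $e_p$ is Corollary~\ref{minatep}, obtained by reducing through Theorem~\ref{locgivesetal} to the étale local category $\dtmep$ and then applying the Rigidity Theorem together with Proposition~\ref{compacttheorem}. Minimality at $m_p$ is Corollary~\ref{minatmp}, deduced via Proposition~\ref{compacttheorem} from stratification of $DTM(\olQ,\ZZ/p)$, which in turn follows from Theorem~\ref{dtmmodpstrattheorem} by transporting stratification from $D(Mod_{gr}(\ZZ/p[\beta]))$ via the Brown--Adams machinery of Theorem~\ref{stratifiedusingbrownadamstheorem}.

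Since every prime falls into one of these three classes and minimality has been verified in each case, combined with the automatic local-to-global principle, the equivalent condition (a) of Theorem~\ref{strattheorem} is satisfied, and $DTM(\olQ,\ZZ)$ is stratified. The only step requiring care is the bookkeeping: one should note, as in Remark~\ref{3difftypesofprimes}, that minimality at the primes $m_p$ and $e_p$ of $Spc(DTM(\olQ,\ZZ)^c)$ is indeed equivalent to minimality at the corresponding primes in the local category $\dtmp$, which is what the cited corollaries actually prove. Once this identification is made explicit, the theorem is immediate. There is no remaining obstacle since all the technical work has been carried out in the preceding sections.
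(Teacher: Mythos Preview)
Your proposal is correct and follows essentially the same approach as the paper: noetherianity of the spectrum gives the local-to-global principle, and minimality at each of the three types of primes is handled by the cited Corollaries~\ref{minatmo},~\ref{minatep}, and~\ref{minatmp}. Your added paragraph unpacking the provenance of each corollary and the bookkeeping via Remark~\ref{3difftypesofprimes} is accurate elaboration rather than a different argument.
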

\begin{proof}
    Since \(DTM(\olQ, \ZZ)\) satisfies the local-to-global principle automatically because \(Spc(DTM(\olQ, \ZZ)^c)\) is noetherian, we just have to show minimality holds at each prime. This is precisely what we showed in Corollaries~\ref{minatmp},~\ref{minatep}, and~\ref{minatmo}.
\end{proof}
As remarked in the introduction, an important consequence of a category being stratified is an affirmative answer to the abstract \textit{telescope conjecture} \cite[Theorem~9.11]{BHSStrat}. Hence as a corollary to the above theorem we immediately get:
\begin{theorem}\label{telescopeconjecture}
    Every smashing ideal of \(DTM(\olQ, \ZZ)\) is compactly generated. That is, every smashing localization is a finite localization.
\end{theorem}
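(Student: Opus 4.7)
The plan is to deduce Theorem~\ref{telescopeconjecture} as a direct consequence of Theorem~\ref{maintheoremdtmstrat} together with the BHS telescope theorem \cite[Theorem~9.11]{BHSStrat}. That result says: if \(\mc T\) is stratified and \(Spc(\mc T^c)\) satisfies a mild topological hypothesis (it suffices that the spectrum be noetherian, so that every point is visible), then every smashing tensor ideal of \(\mc T\) is generated by compact objects, i.e.\ coincides with the localizing ideal generated by a set of compact objects.

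First I would verify the two inputs. Stratification of \(\mc T := DTM(\olQ,\ZZ)\) is precisely Theorem~\ref{maintheoremdtmstrat}, which was proved by combining minimality at \(m_0\) (Corollary~\ref{minatmo}), at \(e_p\) (Corollary~\ref{minatep}), and at \(m_p\) (Corollary~\ref{minatmp}), together with the automatic local-to-global principle for noetherian spectra (Remark~\ref{localglobalnoeth}). The topological hypothesis is witnessed by Theorem~\ref{maintheoremmartin}: \(Spc(DTM(\olQ,\ZZ)^c)\) is the explicit noetherian space displayed there, so every prime has Thomason closure and hence is visible (in particular weakly visible), which is exactly what \cite[Theorem~9.11]{BHSStrat} requires.

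With both hypotheses in place, the proof reduces to a one-line invocation of \cite[Theorem~9.11]{BHSStrat}: every smashing tensor ideal of \(DTM(\olQ,\ZZ)\) is of the form \(\lt(x_i : i \in I)\) for some family of compact objects \(x_i \in DTM(\olQ,\ZZ)^c\), which is the usual reformulation that every smashing localization is a finite localization. I do not anticipate any genuine obstacle here; the substantive content lies entirely in the stratification theorem already proved, and the noetherianity of the Balmer spectrum was recorded in Theorem~\ref{maintheoremmartin}. The only care needed is in matching the formulation of the telescope conjecture used in \cite[\S9]{BHSStrat} with the statement above, but both express the same compact-generation property of smashing ideals and so the translation is immediate.
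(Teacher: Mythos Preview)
Your proposal is correct and matches the paper's approach exactly: the paper also deduces Theorem~\ref{telescopeconjecture} immediately from Theorem~\ref{maintheoremdtmstrat} by invoking \cite[Theorem~9.11]{BHSStrat}, using that \(Spc(DTM(\olQ,\ZZ)^c)\) is noetherian (Theorem~\ref{maintheoremmartin}) so the required topological hypothesis holds.
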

\bibliographystyle{alpha}
\bibliography{citations}
\end{document}